\newcommand{\R}{\mathbb{R}}
\newcommand{\PP}{\mathbb{P}}
\newcommand{\Z}{\mathbb{Z}}
\def\RSthmtxt{theorem~}\newref{thm}{name = \RSthmtxt}}
\def\RSlemtxt{lemma~}\newref{lem}{name = \RSlemtxt}}
\theoremstyle{plain}
\newtheorem{lem}{\protect\lemmaname}
\theoremstyle{definition}
\newtheorem{defn}{\protect\definitionname}
\theoremstyle{plain}
\newtheorem{thm}{\protect\theoremname}
\theoremstyle{plain}
\newtheorem{cor}{\protect\corollaryname}
\theoremstyle{plain}
\newtheorem{prop}{\protect\propositionname}
\theoremstyle{remark}
\newtheorem{rem}{\protect\remarkname}
\def\RSthmtxt{Theorem~}
\def\RSlemtxt{Lemma~}
\def\fu{G}
\providecommand{\definitionname}{Definition}
\providecommand{\lemmaname}{Lemma}
\providecommand{\propositionname}{Proposition}
\providecommand{\remarkname}{Remark}
\providecommand{\corollaryname}{Corollary}
\providecommand{\theoremname}{Theorem}
\begin{document}
	
	\title{The TAZRP speed process}
	
	\author[G.~Amir]{Gideon Amir}
	\address{Gideon Amir\\Bar-Ilan University\\ 5290002\\ Ramat Gan\\ Israel}
	\email{gidi.amir@gmail.com}
	\thanks{G.~Amir was supported by the Israel Science Foundation through grant 575/16 and by the German Israeli Foundation through grant I-1363-304.6/2016.}

	\author[O.~Busani]{Ofer Busani}
	\address{Ofer Busani\\ University of Bristol\\  School of Mathematics\\ Fry Building\\ Woodland Rd.\\Bristol BS8 1UG\\ UK.}
	\email{o.busani@bristol.ac.uk}
	\urladdr{https://people.maths.bris.ac.uk/~di18476/}
	\thanks{O.~Busani was supported by EPSRC's EP/R021449/1 Standard Grant.}
	
	\author[P.~Goncalves]{Patr\'{i}cia Gon\c{c}alves}
\address{Patr\'{i}cia Gon\c{c}alves\\Center for Mathematical Analysis, Geometry and Dynamical Systems\\ Instituto Superior T\'{e}cnico\\Universidade de Lisboa \\ 1049-001 Lisboa \\Portugal}
\email{pgoncalves@tecnico.ulisboa.pt}
\urladdr{{https://patriciamath.wixsite.com/patricia}}
\thanks{P.~Gon\c calves thanks  FCT/Portugal for support through the project
UID/MAT/04459/2013.  This project has received funding from the European Research Council (ERC) under  the European Union's Horizon 2020 research and innovation programme (grant agreement   No 715734)}
	\author[J.~B.~Martin]{James B.\ Martin}
	\address{James B. Martin\\Department of Statistics\\
	University of Oxford\\UK}
	\email{martin@stats.ox.ac.uk}
	\urladdr{{http://www.stats.ox.ac.uk/~martin}}
	
	\begin{abstract}
%	In \cite{amir2011tasep} Amir, Angel and
%	Valk{\'o} introduced the TASEP speed process which allowed them to answer delicate questions about the joint distribution of the speed of several (and infinitely many) second-class particles in the Totally Asymmetric Simple Exclusion Process(TASEP) rarefaction fan. In this paper we introduce the analogue of the TASEP speed process to the Totally Asymmetric Zero Range Process (TAZRP) and use it to obtain new results on the joint distribution of the speed of several second-class particles in the TAZRP with a reservoir.
	
	In \cite{amir2011tasep} Amir, Angel and 	Valk{\'o}
	studied a multi-type version of the totally asymmetric
	simple exclusion process (TASEP) and introduced
	the TASEP speed process, which allowed them to answer delicate
	questions about the joint distribution of the speed of
	several second-class particles in the
	TASEP rarefaction fan. In this paper we introduce
	the analogue of the TASEP speed process for the totally
	asymmetric zero-range process (TAZRP), and use it to obtain
	new results on the joint distribution of the speed
	of several second-class particles in the TAZRP with a reservoir.
	These is a close link from the speed process to
	questions about stationary distributions of multi-type
	versions of the TAZRP; for example we are able to give a
	precise description of the contents of a single site in
	equilibrium for a multi-type TAZRP with continuous labels.

	\end{abstract}
	\maketitle
	
	\section{introduction}

In the totally asymmetric simple exclusion process (TASEP),
each site of $\mathbb{Z}$ contains either a particle or a hole.
If a particle has a hole to its right, they exchange places
at rate $1$. In \cite{FK}, Ferrari and Kipnis considered the TASEP
with Riemannian initial data -- that is, where there exists
an asymptotic density of particles to the left of the origin,
and also a (perhaps different) asymptotic density of particles
to the right of the origin -- and with a second-class particle placed at the origin. The second-class particle interacts with holes as if it were a particle, and with particles as if it were a hole.

As the configuration evolves, the position of the
second-class particle, $X_2(t)$, changes, and a natural
question is whether the limit
\begin{align}\label{scspeed}
U=\lim_{t\rightarrow \infty}t^{-1}X_2(t),
\end{align}
exists, and if so, in what sense. Consider for example
the case where the initial condition has particles at all negative
sites and holes at all positive sites. It was shown in
\cite{FK} that the limit in (\ref{scspeed}) exists
in distribution, and that
\begin{align}
U\sim \text{U}[-1,1],
\end{align}
The hydrodynamics of the TASEP are described by
the inviscid Burgers equation; for these initial conditions,
the equation displays an entire interval of characteristics
emanating from the origin (the so-called ``rarefaction fan"),
and one has the interpretation that the speed of the
second-class particle is distributed uniformly across
the set of characteristics.

%\textcolor{red}{Patricia: In fact they proved not only for TASEP but ASEP and for more general initial conditions. The limit is uniform but the support changes.}

The natural question of whether the convergence in (\ref{scspeed}) can be strengthened to almost sure convergence was resolved in \cite{MG} by Mountford and Guiol, for the rarefaction-fan initial condition, using large deviations for last-passage percolation and microscopic approximation of the Hamilton-Jacobi equation related to the TASEP hydrodynamics. A different proof was given by
Ferrari and Pimentel \cite{FerPim} using a direct
coupling between the path of the second-class particle
and an interface in a two-type last-passage percolation model.

In \cite{FGM2009} Ferrari, Gon{\c c}alves and Martin considered the TASEP process (and partially-asymmetric versions of it) starting from a configuration with two second-class particles
$P$ and $Q$ at positions $0$ and $1$ respectively, with only first class particles to their left and only holes to their right. They showed, for example, that for the TASEP, the probability that $P$ attempts a jump over $Q$ at some time $t>0$ is $\frac{2}{3}$.

In order to answer further questions about the joint distribution of the speed of several second-class particles at the rarefaction fan -- such as, what is the probability that the two second-class particles develop the same speed? -- Amir, Angel and Valk{\'o} \cite{amir2011tasep} introduced the TASEP speed process. In this model one starts from an initial condition in which every site of
$\mathbb{Z}$ contains a particle of a different type, with a
hierarchy determined by their initial position. Each particle
sees itself as a second-class particle viewing all particles to
its left as first-class particles, and all particles to its right as holes. In this way, the particle positioned at any site
$i\in\mathbb{Z}$ develops a speed almost surely, and one obtains the so-called TASEP speed process
\begin{align}
\{U_i\}_{i\in \mathbb{Z}},
\end{align}
a process indexed by $\mathbb{Z}$ which encodes the joint speed of all particles. This process proved to be a rich model encoding much information about the joint behaviour of second-class particles around the rarefaction fan. In the case of two second-class particles in the rarefaction fan, an explicit joint distribution of the speed was obtained, in particular, it was shown that with positive probability ($\frac{1}{6}$) the two particles develop the same speed. In fact, it was shown that with probability $1$,
the set of speeds attained is dense in $[-1,1]$, and
that for any speed $v$ which is attained, there are in fact
infinitely many particles, called a convoy, with speed $v$.

The TASEP speed process was also used in \cite{CH2012} and \cite{C2011} by Coupier and Heinrich to show that in the last-passage percolation model, there are no three geodesics with the same direction. Results from
\cite{amir2011tasep} about the speed process of the TASEP,
and about related questions concerning speeds of particles in
partially asymmetric systems, were recently extended to models with inhomogeneity in space and time by Borodin and Bufetov \cite{borodin2019color}.

A closely related and also widely-studied interacting particle
system is
%Perhaps the second most well studied interacting particle system is
the (constant-rate) Totally Asymmetric Zero-Range Process (TAZRP). In this process each site of $\mathbb{Z}$ can contain any finite number $n$ of particles. Each site is equipped with a Poisson clock with rate 1, upon ringing, if there is a particles at site $x$ it jumps to site $x+1$. Note that for the TASEP the full rarefaction fan is obtained by taking the maximum density ($1$) to the left of the origin and minimum density ($0$) to the right of it. As for the TAZRP the number of particles at each site is unbounded, it seems that the analogue to the full rarefaction fan initial condition for the TASEP is the initial condition where to the left of the origin, the density is infinite and to the right it is zero. This initial condition can be modelled by setting a reservoir for the TAZRP at the left of the origin. The TAZRP with a reservoir is simply the TAZRP on $\{-1,0,1,...\}$ where at site $-1$ there are infinitely many particles. In this model the particles obey the dynamics of TAZRP on $\{0,1,...\}$ while the reservoir itself is equipped with a Poisson clock of rate one, which whenever rings, a particle jumps from the reservoir to site $0$.
In \cite{G2014} it was shown that the TAZRP with a reservoir
has a hydrodynamic limit given by the function
\begin{align}
h(x,t)=
\begin{cases}
\frac{1-\sqrt{\frac{x}{t}}}{\sqrt{\frac{x}{t}}} & \frac{x}{t}\in (0,1)\\
0	& \frac{x}{t}>1.
\end{cases}
\end{align}
In \cite{G2014} Gon\c calves considered  second-class particles  positioned at time $t=0$ at site $0$ for the ZRP with general rate function $g$ and independent Riemannian initial data. In particular, this includes the case with a reservoir at site $-1$ and with all sites to the right of the second-class particle empty.
%\textcolor{red}{Patricia: I proved a more general result than that, I use any ZRP with $g$ and I do not used the coupling for this! We can discuss it, have a look at Theorem 2.2 of my paper.}
Extending a well known coupling between the TASEP and the TAZRP to configurations with second-class particle, it was shown that, in the case of the constant rate TAZRP, the second-class particle has speed $V$ almost surely, and that $V = (\frac{1+U}{2})^2$ where $U\sim \text{U}[-1,1]$. In \cite{BN2017}, Bal{\'a}zs and Nagy obtained the distribution of the speed of a second-class particle at the rarefaction fan for a large set of models including the TASEP and ZRP using a signed measure on the configurations.

In this paper we continue the study of speeds of particles in the TAZRP, and of related questions concerning stationary distributions of multi-type versions of the process.

We consider an initial condition $\eta^*$ in which each site
of $\Z$ has an infinite column of particles (with a bottom particle but no top particle). Every particle
has higher priority than all the particles above it,
and also than all the particles at sites to its right. In this
way, every particle sees itself as a second-class particle
sitting on top of a finite stack of first-class particles at its own site, with an infinite reservoir of first-class particles to its left, and empty space to its right.

We show that every particle develops a speed with probability 1,
leading to an array $U=\{U_{z,i}\}_{z\in \mathbb{Z},i\in \mathbb{N}_0}$ where $U_{z,i}$ is the speed of the particle positioned at column $z$ on top of $i$ particles in $\eta^*$. Furthermore,
the distribution of this ``speed process" $U$ is shown to be
a stationary distribution for a multi-type version of the TAZRP,
whose particles have types in $\R$. Indeed, all translation-invariant stationary distributions can be obtained via appropriate rescalings of the speed process. Although any individual speed
is a continuous random variable, any pair of speeds have positive probability to be equal.

The properties above are analogous to ones known for the TASEP
from \cite{amir2011tasep}. However, in the case of the TAZRP
we can go much further than has been possible for the TASEP
in describing the joint distribution of several speeds. In particular, we give an explicit description of the joint distribution of the speeds of all the particles in a given
column, and hence of the contents of a typical site in
a stationary multi-type TAZRP.

Our approach begins with the coupling between configurations with second-class particles in TASEP and configurations with second-class particles in TAZRP, in particular the connection between the speed of a second-class particle in the TAZRP with the flux of holes seen by a second-class particle in the TASEP. This is combined with the results in \cite{F.M.P.} showing that the second-class particle in the TASEP starting from Riemann initial data has a speed with probability 1, and an expression of the flux of holes seen by a second-class particle as a function of this particle's speed,

To get more precise information about the joint distribution
of speeds, we then develop a new approach involving
fixed points of multi-type queues.
We can think of a site $z$ of the multi-type TAZRP as a
priority queue whose
service process is a Poisson process of rate 1. When a
service occurs, the highest-priority particle present
leaves the queue, moving from $z$ to $z+1$. In a translation-invariant equilibrium, the distribution of the queue's arrival
process (the process of particles moving from $z-1$ to $z$)
is the same as the distribution of the queue's
departure process. Taking as a starting point results
of Martin and Prabhakar \cite{martin2010fixed}, we are
able to build up a detailed description of the
possible distributions of the contents of the queue
for systems with some finite number $n$ of types; by taking
appropriate limits, we can then pass to the full picture
of multi-type equilibria.

The rest of the paper is organized as follows.
In the next section we define the models and give the main results.
In Section \ref{sec:couple} we describe the coupling between the TASEP and the TAZRP, with and without second-class particles. In Section \ref{sec:odd} we prove that distribution of the TAZRP speed process is stationary with respect to the TAZRP dynamics (Theorem \ref{thm:sd}) and start to obtain results on
the distributions of the speeds.
In Section \ref{sec:sq} we study the fixed points
of multi-type priority queues, and prove Theorem
\ref{thm:mq} describing the equilibrium distributions of
a single column in the multi-type TAZRP with a finite
number of types. In Section \ref{ms} we use the results of Section \ref{sec:sq} to prove results about the TAZRP speed process  (Theorem \ref{Poisson pic} and Theorem \ref{thm:tdm}). In Section \ref{sec:ot} we prove a result concerning overtaking between particles which have the same speed (Theorem \ref{thm:ov}).

	\section{main results}\label{sec:mr}
	
	The totaly asymmetric simple exclusion process (TASEP) on $\mathbb{Z}$
	is a Markov process on $\mathcal{Y}=\{0,1\}^{\mathbb{Z}}$ whose generator
	is defined for cylinder functions by $f:\mathcal{Y}\rightarrow \mathbb{R}$
	\begin{equation}
	L^{EP}f\left(\xi\right)=\sum_{x\in\mathbb{Z}}\xi\left(x\right)\left(1-\xi\left(x+1\right)\right)\left(f\left(\xi^{x,x+1}\right)-f\left(\xi\right)\right),\label{eq:TASEP generator}
	\end{equation}
	where
	\[
	\xi^{x,x+1}\left(z\right)=\left\{ \begin{array}{cc}
	\xi\left(x+1\right) & z=x\\
	\xi\left(x\right) & z=x+1\\
	\xi\left(z\right) & \text{otherwise}.
	\end{array}\right.
	\]
	Define the measures $\left\{ \nu_{\alpha}:0\leq\alpha\leq1\right\} $
	as the i.i.d. product measures on $\mathcal{Y}$ s.t $\nu_{\alpha}\left(\xi\left(0\right)=1\right)=\alpha$.
	It is well known that any stationary measure with respect to (\ref{eq:TASEP generator})
	that is also translation invariant is a convex combination of $\left\{ \nu_{\alpha}:0\leq\alpha\leq1\right\} $ (see \cite{liggett2012interacting}).
	Another way to describe the TASEP is through the so-called Harris
	construction. In the Harris construction, we attach to each bond connecting
	two adjacent sites $x$ and $x+1$ a Poisson clock $\mathcal{T}^{\left(x,x+1\right)}$
	of rate one. The dynamics of the process is as follows. At the ring of the clock $\mathcal{T}^{\left(x,x+1\right)}$ at time $t$, if there
	is a particle at site $x$ and no particle at $x+1$ at time $t-$
	then at time $t$ the particle at site $x$ jumps to site $x+1$;
	otherwise, there is no change in the configuration. This construction
	is well defined since on any finite time interval, a.s.\ the graph can be broken into finite subgraphs
	on which the dynamics depends only on its clocks (and not those of
	other subgraphs). \\
	The totally asymmetric zero range process (TAZRP) on $\mathbb{Z}$
	is a Markov process on $\mathcal{\mathcal{X}}=\mathbb{N}_{0}^{\mathbb{Z}}$
	whose generator is given by
	\[
	L^{ZR}f\left(\eta\right)=\sum_{x\in\mathbb{Z}}g\left(\eta\left(x\right)\right)\left(f\left(\eta^{x,x+1}\right)-f\left(\eta\right)\right),
	\]
	where $g:\mathbb{N}_{0}\rightarrow\mathbb{R}_{+}$ satisfies a Lipschitz condition  and
	vanishes at 0, and where
	\[
	\eta^{x,x+1}\left(z\right)=\left\{ \begin{array}{cc}
	\eta\left(x\right)-1 & z=x\\
	\eta\left(x+1\right)+1 & z=x+1\\
	\eta\left(z\right) & \text{otherwise.}
	\end{array}\right.
	\]
	We shall be interested in the case where $g\equiv1$
(i.e.\ $g(x)=1$ for all $x\geq 1$), also known as
	the constant-rate ZRP). As in the TASEP, the TAZRP with $g\equiv1$
	can be built through the Harris construction. On each $z\in\mathbb{Z}$
	a finite number of particles are stacked one on top of the other.
	We attach a Poisson clock $\mathcal{T}^{\left(x,x+1\right)}$ to each
	pair of adjacent sites; upon ringing, if there is at least one particle
	at site $x$ then the bottom particle at $x$ makes a jump to the
	top of the stack at $x+1$, otherwise there is no change in the configuration.
	Alternatively the constant-rate TAZRP can be thought of as a system
	of M/M/1 queues in tandem, one at each site of $\mathbb{Z}$. Using
	the same arguments as before, one can show that the dynamics is well
	defined. The stationary and translation invariant distributions are
	well known for the TAZRP and in the case where $g\equiv1$ are given
	by $\left\{ \mu_{\rho}:0\leq\rho<\infty\right\} $ where $\mu_{\rho}$ is product measure whose marginals are geometric with mean $\rho$, i.e.\
	\begin{align}\label{equilibrium measures for TAZRP}
\mu_{\rho}\left(\eta\left(x\right)=k\right)=\left(\frac{\rho}{1+\rho}\right)^{k}\frac{1}{1+\rho}\qquad k\in\mathbb{N}_{0}.
	\end{align}
	More generally, we can study the \emph{multi-type TAZRP} on
	\begin{align}\label{Zset}
	\mathcal{Z}=\left\{ \eta\in\mathbb{R}^{\mathbb{Z}\times\mathbb{N}_{0}}:\eta\left(z,i\right)\geq\eta\left(z,i+1\right)\right\} .
	\end{align}
	To each particle we assign a ``class'' in $\mathbb{R}$  and now
	the queues become priority queues with infinitely many customers. At each service the highest priority (greatest value)
	particle jumps to the next queue. We will think of the particles at
	each queue as sorted according to their class, with the strongest
	at the bottom.
The value $\eta(z,i)$ represents the class of the
$i$th strongest particle at site $z$.	
	One can get a similar Harris construction using the
	same clocks $\mathcal{T}^{\left(x,x+1\right)}$ as before: when an
	adjacent pair rings the bottom particle at $x$ jumps to $x+1$ and
	positions itself according to its priority. The multi-type TAZRP on
	$\mathcal{Z}$ can be defined through the generator
	\begin{equation}\label{gem}
	Lf\left(\eta\right)=\sum_{x\in\mathbb{Z}}\left(f\left(\sigma_{x}\eta\right)-f\left(\eta\right)\right),
	\end{equation}
	where the operator $\sigma_{x}$ is defined in the following way:
	let $i_{sort^{z}}\left(\alpha\right)=\min\left\{ i:\eta\left(z,i\right)<\alpha\right\} $.
	In other words, $i_{sort^{z}}$ is the lowest index for which $\eta\left(z,\cdot\right)$
	is smaller than $\alpha$. The operator $\sigma_{x}$ is defined
	through
\begin{align}\label{sig}
	\sigma_{x}\eta\left(z,i\right)=\left\{ \begin{array}{cc}
	\eta\left(x,i+1\right) & z=x\\
	\eta\left(x,0\right) & z=x+1,i=i_{sort^{x+1}}\left(\eta\left(x,0\right)\right)\\
	\eta\left(x+1,i-1\right) & z=x+1,i>i_{sort^{x+1}}\left(\eta\left(x,0\right)\right)\\
	\eta\left(z,i\right) & otherwise
	\end{array}\right..
\end{align}
	In words, $\sigma_{x}$ takes the lowest-positioned (and hence of
	highest value) particle in column $x$, $\eta(x,0)$, and puts it at position $i_{sort^{x+1}}\left(\eta(x,0)\right)$
	in column $x+1$ and shifts the position of all particles of value
	 lower than that of $\eta\left(x,0\right)$ upward by one (see Figure \ref{fig:points4}).
	\begin{rem}
		At this point it is not clear why the dynamics in (\ref{gem}) is well defined on the set (\ref{Zset}) as it could be the case that for some $x$
		\begin{align}
			\eta(x,0)<\inf_i \eta(x+1,i).
		\end{align}
		Nevertheless, we shall point out where needed, why on the set of configurations in $\mathcal{Z}$ the dynamics is well defined.
	\end{rem}
	We shall also need the operator $\sigma_{x}^{*}$ on $\mathcal{Z}$
	, which takes $\eta\left(x+1,0\right)$ and puts it in the
	correct position
	in column $x$. More precisely, we define
\begin{align}\label{sig2}
		\sigma_{x}^{*}\eta\left(z,i\right)=\left\{ \begin{array}{cc}
	\eta\left(x+1,i+1\right) & z=x+1\\
	\eta\left(x+1,0\right) & z=x,i=i_{sort^{x}}\left(\eta\left(x+1,0\right)\right)\\
	\eta\left(x,i-1\right) & z=x,i>i_{sort^{x}}\left(\eta\left(x+1,0\right)\right)\\
	\eta\left(z,i\right) & otherwise
	\end{array}\right..
\end{align}
%%%% figure for the dynamics
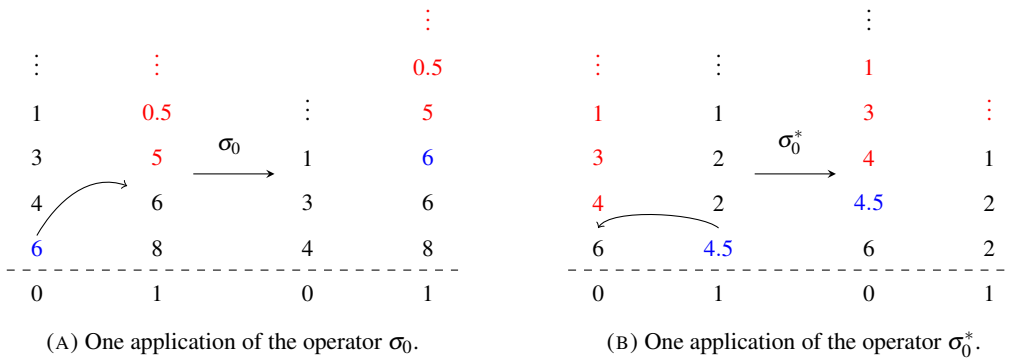
\begin{figure}[ht!]
	\centering%
	\begin{subfigure}[t]{.5\textwidth}
		\centering
		\def\y{2.2}
		\def\z{4}
		\begin{tikzpicture}[scale=0.4, every node/.style={transform shape}]
		\foreach \x in {0,...,1}
		{
			\node [scale=\y][above] at (\x*\z,-1.5) {$\x$};
		};
		%next column
		\node [scale=\y][above][blue] at (0,0) {$6$};
		\node [scale=\y][above][] at (0,1.5) {$4$};
		\node [scale=\y][above][] at (0,3) {$3$};
		\node [scale=\y][above][] at (0,4.5) {$1$};
		\node [scale=\y][above][] at (0,6) {$\vdots$};
		%next column
		\node [scale=\y][above][] at (\z,0) {$8$};
		\node [scale=\y][above][] at (\z,1.5) {$6$};
		\node [scale=\y][above][red] at (\z,3) {$5$};
		\node [scale=\y][above][red] at (\z,4.5) {$0.5$};
		\node [scale=\y][above][red] at (\z,6) {$\vdots$};
		\draw[->] (0,0.95)  to [out=60,in=160, looseness=1] (\z-1,2.6);
		%second step
		\def\w{9}
		\foreach \x in {0,...,1}
		{
			\node [scale=\y][above] at (\x*\z+\w,-1.5) {$\x$};
		};
		%next column
		%next column
		\node [scale=\y][above][] at (\w,0) {$4$};
		\node [scale=\y][above][] at (\w,1.5) {$3$};
		\node [scale=\y][above][] at (\w,3) {$1$};
		\node [scale=\y][above][] at (\w,4.5) {$\vdots$};
		%next column
		\node [scale=\y][above][] at (\z+\w,0) {$8$};
		\node [scale=\y][above][] at (\z+\w,1.5) {$6$};
		\node [scale=\y][above][][blue] at (\z+\w,3) {$6$};
		\node [scale=\y][above][red] at (\z+\w,4.5) {$5$};
		\node [scale=\y][above][red] at (\z+\w,6) {$0.5$};
		\node [scale=\y][above][red] at (\z+\w,7.5) {$\vdots$};
		\draw [->,>=stealth] (1.3*\z,3) -- (1.3*\z+0.3*\w,3);
		\node [scale=\y][above] at (1.6*\z,3.4) {$\sigma_0$};
		\draw [dashed]  (-1,-0.2) -- (14,-0.2);
		\end{tikzpicture}
		\caption{\small One application of the operator $\sigma_{0}$.}
	\end{subfigure}%
	\begin{subfigure}[t]{.5\textwidth}
		\centering
		\def\y{2.2}
		\def\z{4}
		\begin{tikzpicture}[scale=0.4, every node/.style={transform shape}]
		\foreach \x in {0,...,1}
		{
			\node [scale=\y][above] at (\x*\z,-1.5) {$\x$};
		};
		%next column
		\node [scale=\y][above][] at (0,0) {$6$};
		\node [scale=\y][above][red] at (0,1.5) {$4$};
		\node [scale=\y][above][red] at (0,3) {$3$};
		\node [scale=\y][above][red] at (0,4.5) {$1$};
		\node [scale=\y][above][red] at (0,6) {$\vdots$};
		%next column
		\node [scale=\y][above][blue] at (\z,0) {$4.5$};
		\node [scale=\y][above][] at (\z,1.5) {$2$};
		\node [scale=\y][above][] at (\z,3) {$2$};
		\node [scale=\y][above][] at (\z,4.5) {$1$};
		\node [scale=\y][above][] at (\z,6) {$\vdots$};
		\draw[->] (\z,1.2)  to [out=140,in=60, looseness=0.5] (0,1.2);
		%second step
		\def\w{9}
		\foreach \x in {0,...,1}
		{
			\node [scale=\y][above] at (\x*\z+\w,-1.5) {$\x$};
		};
		%next column
		%next column
		\node [scale=\y][above][] at (\w,0) {$6$};
		\node [scale=\y][above][blue] at (\w,1.5) {$4.5$};
		\node [scale=\y][above][red] at (\w,3) {$4$};
		\node [scale=\y][above][red] at (\w,4.5) {$3$};
		\node [scale=\y][above][red] at (\w,6) {$1$};
		\node [scale=\y][above][] at (\w,7.5) {$\vdots$};
		%next column
		\node [scale=\y][above][] at (\z+\w,0) {$2$};
		\node [scale=\y][above][] at (\z+\w,1.5) {$2$};
		\node [scale=\y][above][][] at (\z+\w,3) {$1$};
		\node [scale=\y][above][red] at (\z+\w,4.5) {$\vdots$};
		\draw [->,>=stealth] (1.3*\z,3) -- (1.3*\z+0.3*\w,3);
		\node [scale=\y][above] at (1.6*\z,3.4) {$\sigma^*_0$};
		\draw [dashed]  (-1,-0.2) -- (14,-0.2);
		\end{tikzpicture}
		\caption{\small One application of the operator $\sigma^*_{0}$.}
	\end{subfigure}%
	\caption{\small The two operators $\sigma$ and $\sigma^*$ acting on columns $0$ and $1$.}
	\label{fig:points4}
\end{figure}
%\section{the TAZRP speed process}
We would like to consider
a process analogous to the TASEP speed process introduced in \cite{amir2011tasep}. In \cite{amir2011tasep}, an ergodic process $\{U_i\}_{i\in \mathbb{Z}}$ was constructed where $U_0 \sim \text{U}[-1,1]$. The marginal $U_i$, represents the speed of a second-class particle positioned between infinite first class particles to its left and infinite holes to its right, under the TASEP dynamics. The coupling between the different marginals is obtained by starting from an initial condition where there is a hierarchy between the particles. In this initial condition, each particle is stronger than its neighbour to its right. On the ring of the bell at the edge connecting sites $x$ and $x+1$, the particle at site $x$ jumps to site $x+1$ if and only if the particle at site $x+1$ is weaker (i.e. higher class) than the particle at $x$.

In the ZRP the number of particles at each site is not bounded.
We consider starting the dynamics from a configuration where each site has an infinite number of particles.
%\begin{align}\label{par}
%	p_{z,n}\text{  -  the particle sitting at column $z$ on top of %$n$ particles.}
%\end{align}

We denote by $p_{z,n}$ the particle sitting at site $z$
with $n$ other particles below it.
Here too, we impose a full hierarchy (order relation) on the initial particles according to the lexicographical order: $p_{i,j}$ is stronger than $p_{k,l}$ (denoted $p_{i,j}>p_{k,l}$) if $i<k$, or if $i=k$ and $j<l$. (That is, each particle is stronger than those at sites to its right, or at the same site and directly above it).

Consider a specific particle in our initial configuration. If we only care about the dynamics of that particle, then we do not care about the hierarchy between the other particles. Thinking of that particle as second-class, we may consider  all particles underneath it or at sites to its left as first-class particles, and all the particles above it or at sites to its right as holes. We will show in  Section \ref{sec:couple} that, under the multi-type TAZRP dynamics, this particle will develop a speed with probability $1$. We record this speed in $U_{i,j}$, the $(i,j)$'th element of the array $\{U_{i,j}\}_{i,j\in \mathbb{Z}\times \mathbb{N}_0}$, the TAZRP speed process.

Another way to visualize the configuration of particles
denoted above by $p_{z,n}$
is by considering an array of numbers $\eta ^* \in \mathcal{Z}$ with
\begin{equation}\label{initial condition}
	\eta^*(z,i) < \eta^*(w,j) \text{ if and only if } (w=z \text{ and } i>j) \text{ or } (w<z)
\end{equation}
where $\mathcal{Z}$ is the set defined in (\ref{Zset}) (see Figure \ref{fig:points3}). Here, particle $p_{z,i}$ is identified with the number $\eta^*(z,i)$ and the index $(z,i)$ throughout the dynamics. The number $\eta^*(z,i)$  plays the role of the class, or type of the particle which  determines its interaction with other particles in the configuration in time. Note that stronger particles correspond to higher values, as opposed to the set-up in \cite{amir2011tasep}. Between each pair of neighbouring columns in the array we assign a Poisson clock, where upon ringing the largest number in the left column (sitting at the bottom of the column) makes a jump to the right column and positions itself on top of all the numbers that are strictly larger than itself. We shall go from the picture of the array of numbers to the array of particles often. We also use the words class and type interchangeably, so that
\begin{align}
	\text{$p_{z,i}$ has higher class than $p_{w,j}$ \,\,  $\Leftrightarrow$ \,\,  $\eta^*(z,i) > \eta^*(w,j)$}.
\end{align}
%%%%%% figure showing \eta^* and the dynamics
\begin{figure}[ht!]
		\centering%
		\def\y{2.8}
		\def\z{5}
		\begin{tikzpicture}[scale=0.4, every node/.style={transform shape}]
		\foreach \x in {-1,...,2}
		{
			\node [scale=\y][above] at (\x*\z,-1.5) {$\x$};
		};
		\node [scale=\y][above] at (-\z-2,4) {$\dots$};
		\node [scale=\y][above] at (-\z,0) {$p_{-1,0}$};
		\node [scale=\y][above] at (-\z,2) {$p_{-1,1}$};
		\node [scale=\y][above] at (-\z,4) {$\vdots$};
		\node [scale=\y][above] at (-\z,6) {$p_{-1,i}$};
		\node [scale=\y][above] at (-\z,8) {$\vdots$};
		%next column
		\node [scale=\y][above] at (0,0) {$p_{0,0}$};
		\node [scale=\y][above] at (0,2) {$p_{0,1}$};
		\node [scale=\y][above] at (0,4) {$\vdots$};
		\node [scale=\y][above] at (0,6) {$p_{0,i}$};
		\node [scale=\y][above] at (0,8) {$\vdots$};
		%next column
		\node [scale=\y][above] at (\z,0) {$p_{1,0}$};
		\node [scale=\y][above] at (\z,2) {$p_{1,1}$};
		\node [scale=\y][above] at (\z,4) {$\vdots$};
		\node [scale=\y][above] at (\z,6) {$p_{1,i}$};
		\node [scale=\y][above] at (\z,8) {$\vdots$};
		%next column
		\node [scale=\y][above] at (\z*2,0) {$p_{2,0}$};
		\node [scale=\y][above] at (\z*2,2) {$p_{2,1}$};
		\node [scale=\y][above] at (\z*2,4) {$\vdots$};
		\node [scale=\y][above] at (\z*2,6) {$p_{2,i}$};
		\node [scale=\y][above] at (\z*2,8) {$\vdots$};
		\node [scale=\y][above] at (\z*2+2,4) {$\dots$};
		\end{tikzpicture}
	\caption{\small The initial configuration $\eta^*$}
	\label{fig:points3}
\end{figure}
%  We divide the particles into classes, that is, each
%particle is assigned a real number that determines the class of the
%particle. At any column $z$, particles are ordered according to their
%classes, that is, particles of the lowest class are positioned at
%the bottom (the order within the class is unimportant), above them
%are the particle of next to lowest, and so on. At the ring of a Poisson
%clock assigned to site $z$ the particle at the bottom jumps to site
%$z+1$ and positions himslef exactly below the particles that are
%of the lowest class that is higher then his. It is not hard to see
%that the process described above can be realized through the process
%defined in the previous section through the generator (\ref{gem}).
%Denote the particle that sits on the $z$ site at height $n$ by $p_{z,i}$
%(there exist $n-1$ particles under it. We imagine the particles sitting
%at site $z$ stacked one on top of the other). We shall be interested
%in the initial configuration where the particle $p_{z,i}$ has a lower
%class then $p_{j,m}$ if $i<j$ or $i=j$ and $n<m$. In the context
%of the multi-type TAZRP, we can set the initial condition to be, for
%instance,
%\[
%\eta^{*}\left(z,i\right)=-z-\sum_{j=1}^{i}3^{-j}.
%\]
Let $X_{z,i}\left(t\right)$ denote the position of the particle $p_{z,i}$
at time $t$, that is, the site (column) $X_{z,i}\left(t\right)\in\mathbb{Z}$
where the particle $p_{z,i}$ can be found at time $t$ under the
dynamics of (\ref{gem}) and the initial condition $\eta^{*}$, i.e. the multi-type TAZRP. Let $p_{z,i}$
and $p'_{w,i}$ be two particles in the configurations $\eta$ and
$\eta'$ respectively. We say the particle $p_{z,i}$ sees the same
environment as particle $p'_{w,i}$ if for every $l\in\mathbb{N}_{0}$
and $k\in\mathbb{Z}$
\begin{align*}
p_{z+k,l} & \leq p_{z,i}\Longleftrightarrow p'_{w+k,l}\leq p'_{w,i},\\
p_{z+k,l} & \geq p_{z,i}\Longleftrightarrow p'_{w+k,l}\geq p'_{w,i}.
\end{align*}
 In other words, two particles see the same environment if the relative order between them and other particles in the configuration is preserved relative to their position.\\
We say that $p_{z,i}$ has a speed
if $\lim_{t\rightarrow\infty}t^{-1}X_{z,i}\left(t\right)$ exists
and call the limit the speed of $p_{z,i}$. We have the following result, a proof of which we give in  Section \ref{sec:couple}.
\begin{lem}\label{Fpro}
For every $z \in\mathbb{Z}$ and $i\in\mathbb{N}$ the particle $p_{z,i}$ has a strictly positive speed with probability one.  That is, the following limit exists and is strictly positive a.s.
\begin{equation}
U_{z,i}=\lim_{t\rightarrow\infty}t^{-1}X_{z,i}\left(t\right)>0.\label{eq:speed_of_SCP}
\end{equation}
\end{lem}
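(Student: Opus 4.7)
The plan is to reduce (\ref{eq:speed_of_SCP}) to the existence of an almost-sure strictly positive speed for a single second-class particle in a two-type constant-rate TAZRP with reservoir, and then invoke the TAZRP--TASEP coupling together with the almost-sure speed theorem for a TASEP second-class particle in Riemann initial data from \cite{F.M.P.}. By translation invariance of the Harris construction it suffices to take $z=0$. The key observation is that, under (\ref{gem})--(\ref{sig}), the trajectory of $p_{0,i}$ depends only on the particles of class at least $\eta^{*}(0,i)$: whenever a strictly weaker particle arrives at the column currently occupied by $p_{0,i}$, it is inserted strictly above $p_{0,i}$ in the stack and therefore never displaces it. Collapsing the particles of class $>\eta^{*}(0,i)$ into a single ``first-class'' label produces a two-type TAZRP whose initial condition has infinitely many first-class particles at every site in $\{\dots,-2,-1\}$, exactly $i$ first-class particles at site $0$, the distinguished particle $p_{0,i}$ on top, and nothing at sites $>0$. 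Since the semi-infinite reservoir on the left is never exhausted, it can be equivalently replaced by a single rate-one Poisson source attached at $-1$ that injects first-class particles into site $0$.

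I would then apply the standard coupling between the constant-rate TAZRP and the TASEP, extended to configurations with a second-class particle as is to be carried out in Section \ref{sec:couple}. In this dictionary, each TAZRP column of height $\eta(x)$ corresponds to a block of $\eta(x)$ consecutive holes between two adjacent first-class TASEP particles, the reservoir becomes a semi-infinite block of first-class TASEP particles on the left, and the distinguished TAZRP particle becomes a second-class TASEP particle inserted into the rarefaction fan separating densities $1$ and $0$. The $i$ extra first-class particles initially at site $0$ of the TAZRP correspond only to a finite translation of the initial TASEP configuration and do not affect the asymptotic speed. By the theorem of \cite{F.M.P.}, this second-class TASEP particle has an almost-sure asymptotic speed $U\sim\text{U}[-1,1]$; translating back through the coupling yields that $t^{-1}X_{0,i}(t)\to V=\bigl((1+U)/2\bigr)^{2}$ almost surely, the relation $V=\bigl((1+U)/2\bigr)^{2}$ coming from expressing the speed of the TAZRP second-class particle as the asymptotic flux of holes crossed by the corresponding TASEP second-class particle. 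Since $\PP(U=-1)=0$, we have $V>0$ a.s., proving (\ref{eq:speed_of_SCP}).

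The main technical obstacle is the first step: defining the multi-type dynamics starting from the ``fully packed'' configuration $\eta^{*}$ (which has infinitely many particles in every column) and rigorously justifying that the trajectory of $p_{0,i}$ is measurable with respect to, and correctly described by, the two-type subsystem of particles of class $\geq\eta^{*}(0,i)$. Once this projection/finite-dependence argument is carried out, the coupling with the TASEP and the appeal to \cite{F.M.P.} are essentially off-the-shelf and the strict positivity of $V$ is immediate.
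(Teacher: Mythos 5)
Your overall strategy --- collapse all classes relative to $p_{z,i}$ to obtain a two-type TAZRP with a reservoir, map it to a TASEP with one second-class particle, and invoke \cite{F.M.P.} together with the flux identity (\ref{sf}) --- is exactly the paper's argument for the bottom particle $p_{0,0}$. The technical obstacle you flag (measurability of the trajectory with respect to the two-type projection) is not really the issue; that step is the standard monotone-relabelling argument (cf.\ Lemma \ref{lem:preservence_of_monotonicity}) and the paper treats it as routine.

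The genuine gap is in your treatment of $i>0$. Under the coupling of Subsection \ref{ssec:scc}, the particles sitting \emph{below} $p_{0,i}$ in column $0$ do not translate the TASEP configuration: the bottom of a TAZRP column corresponds to the TASEP particle adjacent to the hole on its right, so the $i$ first-class particles below $p_{0,i}$ become $i$ first-class TASEP particles lying \emph{between} the second-class particle $q_2$ and the first hole. The resulting initial datum is $\cdots 1111\,2\,\underbrace{1\cdots1}_{i}\,000\cdots$, not a shift of $\cdots 1111\,2\,000\cdots$. This is still Riemann data with $\rho=1$, $\lambda=0$, so (\ref{sf}) gives existence of the limit $\bigl((1+\mathcal{U})/2\bigr)^{2}$; but strict positivity requires $\PP(\mathcal{U}=-1)=0$, and the uniform law $\mathcal{U}\sim\mathrm{U}[-1,1]$ quoted from \cite{F.M.P.} is stated for i.i.d.\ data on either side of the origin, not for this deterministically perturbed configuration. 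Your proof therefore needs an extra argument to rule out $\mathcal{U}=-1$ (e.g.\ that finite perturbations do not change the speed distribution), which you do not supply. The paper sidesteps this entirely: after establishing the case $i=0$, it observes that with positive probability $i$ particles jump from column $-1$ to column $0$ before any competing clock rings, after which $p_{0,0}$ sees exactly the environment that $p_{0,i}$ sees in $\eta^{*}$; since having a positive speed is a tail event, a failure for $p_{0,i}$ with positive probability would force a failure for $p_{0,0}$, a contradiction. Either supply the missing argument for the perturbed TASEP datum or adopt this reduction.
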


We are now in a position to define the TAZRP speed process.
\begin{defn}\label{def}
The \emph{TAZRP speed process} $U=\{U_{z,i}\}_{\mathbb{Z}\times \mathbb{N}_0}$ is given by
\[
U_{z,i}=\lim_{t\rightarrow\infty}t^{-1}X_{z,i}\left(t\right) \qquad \text{for $z\in \mathbb{Z},i\in \mathbb{N}_0$},
\]
where the limit is with probability one. We define $\mu$ to be the distribution of the process $U$ on $\mathcal{Z}$.
\end{defn}

We also show the following property of the TAZRP speed process, which together with the previous Lemma show that $U\in \mathcal{Z}$ and that on the support of $\mu$ in the set $\mathcal{Z}$ the dynamics in (\ref{gem}) is well defined.
\begin{lem}\label{lem:Esum=1}
$\sum_{i=0}^{\infty} \mathbb{E}[U_{z,i}]=1$ and $\PP(\inf_{0 \leq i} U_{z,i}=0)=1$. 
\end{lem}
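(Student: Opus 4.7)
The proof splits into the identity $\sum_i \mathbb{E}[U_{z,i}] = 1$ and the statement $\inf_i U_{z,i} = 0$ a.s. For the identity, my plan is a flux conservation argument based on the total rightward displacement
\[ S_t := \sum_{i \geq 0} \bigl(X_{z,i}(t) - z\bigr) \]
of the initial column-$z$ particles. The key point is that in the dynamics from $\eta^*$ column $z$ never empties: only finitely many column-$z$ particles can leave column $z$ in $[0,t]$, bounded by the number of rings of $\mathcal{T}^{(z,z+1)}$, while the initial stack at $z$ is infinite. Consequently every ring of $\mathcal{T}^{(z,z+1)}$ produces a jump, so the total flux $\Phi_z(t)$ across edge $(z,z+1)$ coincides with this Poisson$(t)$ counting process, with mean $t$. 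Decomposing $\Phi_z(t) = \sum_{z' \leq z} \Phi_z^{(z')}(t)$ according to the starting column of the crossing particle and exploiting the translation invariance of $\eta^*$ and of the Harris construction, $\mathbb{E}[\Phi_w^{(z)}(t)]$ depends only on $w-z$. Using $X_{z,i}(t) - z = \sum_{w \geq z} \mathbf{1}[X_{z,i}(t) > w]$ one can write $S_t = \sum_{w \geq z} \Phi_w^{(z)}(t)$, and matching the resulting expectation against the flux decomposition at the single edge $(z,z+1)$ yields the exact equality $\mathbb{E}[S_t] = \mathbb{E}[\Phi_z(t)] = t$.

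To pass to the limit, observe that for each fixed $i$ the process $X_{z,i}(\cdot) - z$ is a counting process with instantaneous jump-rate at most $1$ (the particle $p_{z,i}$ advances only when it is at the bottom of its current column and a clock rings), so by a Poisson-coupling argument $X_{z,i}(t) - z$ is stochastically dominated by a Poisson$(t)$ variable. Hence $\{(X_{z,i}(t)-z)/t\}_{t \geq 1}$ is uniformly integrable, and Lemma \ref{Fpro} combined with dominated convergence gives $\mathbb{E}[(X_{z,i}(t)-z)/t] \to \mathbb{E}[U_{z,i}]$. Applying Fatou's lemma to the constant-in-$t$ identity $\mathbb{E}[S_t/t] = 1$ then yields $\sum_i \mathbb{E}[U_{z,i}] \leq 1$. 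For the matching lower bound one needs a uniform-in-$t$ tail estimate of the form
\[ \lim_{N \to \infty} \limsup_{t \to \infty} \sum_{i \geq N} \mathbb{E}\!\left[\frac{X_{z,i}(t)-z}{t}\right] = 0, \]
which combined with the convergence of each finite partial sum forces equality. I expect this to be the main obstacle. The natural route is a queueing analysis of column $z$: arrivals of higher-priority particles from column $z-1$ enter column $z$ at rate $1$ (pushing the $p_{z,i}$'s upward in the stack) while service rings occur at rate $1$, so the number of foreign particles at column $z$ evolves as a critical reflected random walk, the number of original column-$z$ particles to have left column $z$ by time $t$ is of order $\sqrt t$, and the exit time $\tau_i$ of $p_{z,i}$ grows diffusively like $i^2$. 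Together with the pathwise bound $X_{z,i}(t) - z \leq (t - \tau_i)^+$ and refinements using the limiting speed $U_{z,i}$, this controls the tail and closes the argument.

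The second assertion is an immediate consequence of the first: since $\sum_i \mathbb{E}[U_{z,i}] = 1 < \infty$ and $U_{z,i} \geq 0$ by Lemma \ref{Fpro}, we have $\sum_i U_{z,i} < \infty$ almost surely, so $U_{z,i} \to 0$ almost surely and hence $\inf_{i \geq 0} U_{z,i} = 0$ almost surely.
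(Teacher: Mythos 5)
Your flux-conservation identity $\mathbb{E}[S_t]=t$ is the paper's argument in different clothing: the quantity $\Phi_w^{(z)}(t)=\#\{i:X_{z,i}(t)>w\}$ is exactly the paper's mass-transport function, and matching $\sum_{w\geq z}\mathbb{E}[\Phi_w^{(z)}(t)]$ against $\sum_{z'\leq z}\mathbb{E}[\Phi_z^{(z')}(t)]$ via translation invariance is the mass-transport principle; your deduction of the second assertion from the first is also the paper's (summability of $\mathbb{E}[U_{z,i}]$ plus Markov and Borel--Cantelli). The structural difference is that the paper truncates in the height index \emph{before} taking any limits: it applies the transport identity to $f^k_t(z,y)=\sum_{i=0}^k\mathbf{1}_{X_{z,i}(t)>y}$, obtaining $\mathbb{E}[Z^k_t]=\mathbb{E}[Y^k_t]$ where $tY^k_t$ counts jumps from site $0$ by particles of initial height $\leq k$, so that $\mathbb{E}[Y^k_t]\leq 1$ with $\mathbb{E}[Y^k_t]\uparrow 1$ as $k\to\infty$, while $\lim_t\mathbb{E}[Z^k_t]=\sum_{i\leq k}\mathbb{E}[U_{0,i}]$ follows from uniform integrability of a \emph{finite} sum. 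Working with the untruncated $S_t$ as you do forces the interchange of $\sum_i$ with $\lim_t$ into the open, and you correctly identify the resulting lower bound as the crux.

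That is where your proposal has a genuine gap, and the route you sketch for closing it fails quantitatively. Writing $D_s=\#\{i:\tau_i\leq s\}$ for the number of original column-$z$ particles to have departed by time $s$, your bound $X_{z,i}(t)-z\leq(t-\tau_i)^+$ (in expectation, from the jump rate being at most $1$ after departure) gives
\begin{equation*}
\frac1t\sum_{i\geq N}\mathbb{E}\bigl[(t-\tau_i)^+\bigr]=\frac1t\int_0^t\mathbb{E}\bigl[(D_s-N)^+\bigr]\,ds,
\end{equation*}
and since, as you yourself assert, $D_s$ is of order $\sqrt{s}$, the right-hand side grows like $\sqrt{t}$ for every fixed $N$; its $\limsup$ in $t$ is $+\infty$ rather than something vanishing as $N\to\infty$. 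The point is that order $\sqrt{t}$ particles have departed by time $t$ and the rate-$1$ bound lets each of them travel a distance of order $t$, so the crude estimate overshoots the truth ($\mathbb{E}[S_t]=t$) by a factor of $\sqrt{t}$. The "refinement using the limiting speed $U_{z,i}$" that you defer to is not a refinement but essentially the content of the lemma: one must show that the late-departing particles move at their small asymptotic speeds, i.e.\ that the speeds are summable with the right sum. So the first identity is not established by your argument; by contrast, the second assertion and the upper bound $\sum_i\mathbb{E}[U_{z,i}]\leq 1$ are fine as you have them.
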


Note that although, clearly, the measure $\mu$ is translation invariant,
it may not be reflection invariant. Let $\pi$ denote the reflection
operator on $\mathcal{Z}$ defined by $\pi\eta\left(x\right)=\eta\left(-x\right)$.
Then $\pi$ operates on measures on $\mathcal{Z}$ in the usual way,
and we define $\mu^{\pi}=\pi\mu$. We also denote by $\mu^\pi_0$ the distribution of the 0'th column of $\mu^\pi$ (and by stationarity the distribution of any column).

%\begin{thm}\label{thm:sd}
%The distribution $\mu^{\pi}$
%is the unique stationary and ergodic measure of the multi-type TAZRP dynamics whose marginal column distribution is $\mu^\pi_0$.
%\end{thm}

	Let $\fu:\mathbb{R}\rightarrow\mathbb{R}$ be a non-decreasing
function. For $\eta\in\mathcal{Z}$, we write $\fu\left(\eta\right)$
for the configuration $\fu\left(\eta\right)_{z,i}=\fu\left(\eta_{z,i}\right)$.
Note that $\fu\left(\eta\right)\in\mathcal{Z}$. An easy yet important observation is that the dynamics of the multi-type TAZRP
(and likewise the TASEP) are conserved under a monotone relabelling
of the types. (See Lemma \ref{lem:preservence_of_monotonicity}
and Corollary \ref{col:SIM} in Section \ref{sec:odd} below.)

We can now state our first main result.

\begin{thm}\label{thm:sd}
The distribution $\mu^{\pi}$
is an ergodic stationary distribution of the multi-type TAZRP.
Any other translation-invariant ergodic stationary distribution
is the distribution of $\fu(\eta)$ where $\eta\sim \mu^\pi$,
for some non-decreasing function $\fu$ from $\R$ to $\R$.
\end{thm}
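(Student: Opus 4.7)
The plan is to prove the two claims separately. The first claim is that $\mu^{\pi}$ itself is an ergodic stationary distribution of the multi-type TAZRP, and the second is that every other translation-invariant ergodic stationary distribution equals the law of $\fu(\eta)$ with $\eta\sim\mu^{\pi}$ for some non-decreasing $\fu:\R\to\R$.

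To prove the first claim, I would run the multi-type TAZRP from the deterministic hierarchical initial condition $\eta^{*}$ under the Harris graphical construction, and track the \emph{speed-labelled} configuration $V_{t}\in\mathcal{Z}$ defined by $V_{t}(z,i)=U_{z_{0},i_{0}}$ when the particle currently at column $z$, row $i$ at time $t$ was originally $p_{z_{0},i_{0}}$. The first observation is that within each initial column the asymptotic speeds are monotone in $i$, because $p_{z,0}$ is always ahead of $p_{z,1}$, and so on, in the coupled dynamics; combined with the analogous monotonicity between bottoms of adjacent columns, this shows that the relative order of classes and of speeds is preserved at every interaction of the Harris construction. By the monotone relabelling invariance of the dynamics (Corollary \ref{col:SIM}), $V_{t}$ itself evolves as a multi-type TAZRP, starting from $V_{0}$ with law $\mu$. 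The key step is then to show that the finite-dimensional marginals of $V_{t}$ around any fixed column converge as $t\to\infty$ to the corresponding marginals of $\mu^{\pi}$: the particles sitting near column $0$ at large time $t$ are exactly those with asymptotic speeds close to $0$, arranged so that higher speeds lie further to the right, which is the reflection of the original spatial ordering of speeds in $\eta^{*}$. Combining this convergence with the Markov semigroup then gives stationarity of $\mu^{\pi}$: the law of $V_{t+s}$ equals the time-$s$ TAZRP semigroup applied to the law of $V_{t}$, and letting $t\to\infty$ both sides converge to $\mu^{\pi}$. Ergodicity would be inherited from the deterministic translation-invariant initial condition $\eta^{*}$ and the i.i.d.\ family of Poisson clocks, via a standard 0-1 law for the shift-invariant $\sigma$-algebra.

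For the second claim, I would use that the TAZRP dynamics commutes with monotone relabellings of the types (Corollary \ref{col:SIM}), so that for every non-decreasing $\fu:\R\to\R$ the law of $\fu(\eta)$ with $\eta\sim\mu^{\pi}$ is stationary. For the converse, I would appeal to the queueing fixed-point results of Section \ref{sec:sq}, especially Theorem \ref{thm:mq}, which describe the possible single-column marginals of any stationary multi-type TAZRP. Given a candidate $\nu$, the idea is to take $\fu$ to be the quantile-matching map between the single-column marginals of $\mu^{\pi}$ and $\nu$, and then to show that matching this single-column marginal under the constraint of translation-invariant stationarity forces agreement of the full laws on $\mathcal{Z}$; this last step uses the uniqueness aspect of the fixed-point theory.

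The main technical obstacle I anticipate is the identification of the weak limit of $V_{t}$ around column $0$ with $\mu^{\pi}$ in the first claim. This requires simultaneous control over the convergences $t^{-1}X_{z_{0},i_{0}}(t)\to U_{z_{0},i_{0}}$ for infinitely many initial particles that might contribute to a fixed window around the origin at time $t$. The plan is to truncate to finitely many contributing initial columns and use Lemma \ref{lem:Esum=1} to argue that the neglected tail cannot accumulate inside the window. A secondary difficulty in the second claim is that $\nu$ may possess atoms in its single-column marginal while $\mu^{\pi}$ is atomless, so the quantile-matching $\fu$ must be defined carefully to handle atoms and flat pieces; this is a soft issue, however, because only the ordering of types, not their precise values, affects the TAZRP dynamics.
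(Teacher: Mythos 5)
Your treatment of the uniqueness half is essentially the paper's: project onto $n$-type systems via maps of the form (\ref{eq:increasing function}), invoke the Mountford--Prabhakar uniqueness of the $n$-type fixed point with prescribed intensities, and note that since the intensities $\lambda_i$ are read off from the law of the strongest particle $\eta(0,0)$ at a site, a translation-invariant ergodic stationary law is determined by the distribution of $\eta(0,0)$, which can be matched to any target by a non-decreasing $\fu$ because the $\mu^\pi$-law of $\eta(0,0)$ is non-atomic. That part is sound (you match slightly more than necessary, but it does no harm).

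The stationarity half, however, rests on a false convergence claim. You propose to show that the speed-labelled configuration $V_t$, viewed in a fixed window around column $0$, converges weakly to $\mu^{\pi}$, and then to conclude invariance by the standard ``weak limits of a Markov process are stationary'' argument. But $t^{-1}X_{z,i}(t)\to U_{z,i}$ means that any particle found near column $0$ at a large time $t$ must have speed $O(1/t)$; so every label visible in a fixed window tends to $0$, and the local particle density there diverges (the hydrodynamic profile $h(x,t)$ blows up as $x/t\to 0$). Hence $V_t$ near the origin does not converge to $\mu^{\pi}$, nor to any nondegenerate element of $\mathcal{Z}$, and no choice of window (fixed, or moving at velocity $v$) recovers the full law $\mu^{\pi}$ --- only degenerate or single-speed local limits. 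The missing idea is that the stationarity of $\mu^{\pi}$ is not a large-time statement about the configuration at all; it is obtained from a perturbation identity. One compares the speed process $U$ built from $\eta^{*}$ with the speed process $U'$ built from $\sigma_{0}\eta^{*}$ using the same Harris clocks and proves, via the sorting-process coupling of Lemma \ref{lem:effect_jump}, the exact relation $\sigma_{0}^{*}U=U'$. Shifting the Poisson clocks by an infinitesimal time $s$ then applies $\sigma_{i}$ to the initial condition at rate $1$ at each site, hence $\sigma_{i}^{*}$ to the speed array, and since $\pi\sigma_{i}^{*}\eta=\sigma_{-i-1}\pi\eta$ this is precisely the TAZRP generator acting on $\mu^{\pi}$ (ergodicity then follows because $\mu^{\pi}$ is a translation-equivariant factor of the ergodic Poisson field). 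Without this coupling lemma, or an equivalent substitute, your outline cannot be completed.
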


Our next result is about the stationary measures for the $n$-type TAZRP. In the $n$-type TAZRP there are $n$ different classes of particle which may be present at any site.
The first-class particles have the highest priority,
followed by the second-class, and so on.
We may imagine the particles at a site (or column) ordered
according to their type, with the highest-priority particles
at the bottom. When the clock rings at site $x$, the particle of the highest priority jumps to site $x+1$ and positions itself according to its class in column $x+1$. The $n$-type TAZRP can be obtained by restricting the multi-type TAZRP to a subset of the set $\mathcal{Z}$ in (\ref{Zset}). Let
$\mathcal{Z}_{n}=\mathcal{Z}\cap\mathcal{R}_{n}^{\mathbb{Z}\times\mathbb{N}_{0}}$,
where $\mathcal{R}_{n}=\left\{ -1,..,-n, -n-1\right\} $. Let $\eta\left(t\right)$
be the multi-type TAZRP on $\mathcal{Z}$. Then the set $\mathcal{Z}_{n}$
is closed under the dynamics of $\eta\left(t\right)$ , that is, if
$\eta\left(0\right)\in\mathcal{Z}_{n}$ then $\eta\left(t\right)\in\mathcal{Z}_{n}$
for all $t>0$ . We define the $n$-type TAZRP to be the multi-type
TAZRP restricted to $\mathcal{Z}_{n}$.

The interpretation is that for $i=1,2,\dots, n$,
a particle of type $i$ has a label of type $-i\in\mathcal{R}_n$.
If the total number of particles of types $1$ up to $n$ at a site $x$ in a configuration $\eta\in\mathcal{Z}_n$ is $k$, then
$\eta_{x,i}\geq-n$ for $i\leq k-1$ and $\eta_{x,i}=-n-1$ for $i\geq k$.
We interpret the label $-n-1$ as describing
a ``hole" or ``absence" of a particle in the $n$-type TAZRP.
The choice of $\mathcal{R}_{n}$ is not crucial -- one could take any ordered set of size $n+1$ -- but using the set $\mathcal{R}_{n}$, one can  read off the class of the particles by removing the minus sign from the particle label. The TAZRP is the $n$-type TAZRP for $n=1$.

Let $\alpha,p\in(0,1)$. We say a random variable $X$ has
geometric distribution with parameter $\alpha$, denoted
$X\sim\text{Geom}(\alpha)$,
if $\PP(X=k)=(1-\alpha)\alpha^{k-1}$ for $k\geq 1$,
and that $X$ has a Bernoulli-geometric
distribution, denoted $X\sim\text{Ber}(p)\text{Geom}(\alpha)$, if
\[
\mathbb{P}\left(X=k\right)=\begin{cases}
\left(1-p\right) & k=0\\
p\left(1-\alpha\right)\alpha^{k-1} & k\geq 1.
\end{cases}
\]
For $1 \leq i \leq n$ let us denote by $Q_i$ the number of particles of class $i$ in the column $0$ of configurations in $\mathcal{Z}_n$.
\begin{thm}\label{thm:mq}
For any translation-invariant ergodic stationary distribution of the
$n$-type TAZRP,
with non-zero and finite density of particles of types $1,2,\dots,n$, there are $\lambda_1, \dots,\lambda_n>0$
with $\sum_{i=1}^n \lambda_i<1$
such that
the random variables $Q_i$ are independent, and
	\begin{align}\label{Qd}
	Q_{i}\sim \mathrm{Ber}\left(\frac{\lambda_{i}}{1-\left(\lambda_{1}+...+\lambda_{i-1}\right)}\right)\mathrm{Geom}\left(\lambda_{1}+...+\lambda_{i}\right).
	\end{align}
\end{thm}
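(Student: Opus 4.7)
The plan is to view each column of the $n$-type TAZRP as a single-server priority queue with unit-rate Poisson service, and to characterise the translation-invariant ergodic equilibria as fixed points of the resulting arrival-to-departure map, drawing on the framework of Martin and Prabhakar \cite{martin2010fixed}. First, by translation invariance and ergodicity the fluxes $\lambda_i$ of class-$i$ particles across any given bond are deterministic, and by the non-zero finite density assumption one has $\lambda_i>0$ for each $i$ and $\sum_{i=1}^n\lambda_i<1$ (the latter being the stability threshold; violating it would force divergence of $\mathbb{E} Q_i$ and contradict the finite density hypothesis). In any such equilibrium the joint distribution of the arrival processes of all classes into column $0$ coincides with the joint distribution of the departures to column $1$, which is exactly the fixed-point condition one has to solve.

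I would then proceed by induction on $n$. The base case $n=1$ is the classical M/M/1 statement from (\ref{equilibrium measures for TAZRP}): the marginals are $\mu_\rho$ with $\rho=\lambda_1/(1-\lambda_1)$, which rewrites as $Q_1\sim\mathrm{Ber}(\lambda_1)\mathrm{Geom}(\lambda_1)$, matching (\ref{Qd}). For the inductive step I would invoke Corollary \ref{col:SIM} twice. First, the sub-process consisting only of classes $1,\ldots,n-1$ evolves as an $(n-1)$-type TAZRP in its own translation-invariant ergodic equilibrium with fluxes $\lambda_1,\ldots,\lambda_{n-1}$, since class-$n$ particles cannot influence higher-priority ones; the induction hypothesis then yields mutual independence of $Q_1,\ldots,Q_{n-1}$ and the correct Ber-Geom marginals for these $n-1$ coordinates. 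Second, merging classes $1,\ldots,n-1$ into a single super-class reduces the system to a $2$-type TAZRP with rates $\Lambda=\lambda_1+\cdots+\lambda_{n-1}$ and $\lambda_n$; applying the $n=2$ version of the theorem, proved directly from the fixed-point analysis of two-type priority queues, delivers $Q_n\sim\mathrm{Ber}(\lambda_n/(1-\Lambda))\mathrm{Geom}(\Lambda+\lambda_n)$ together with independence of $Q_n$ from the lumped sum $Q_1+\cdots+Q_{n-1}$, exactly matching the $i=n$ clause of (\ref{Qd}).

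The principal obstacle is upgrading the independence $Q_n\perp (Q_1+\cdots+Q_{n-1})$ produced by the lumping argument to the full joint independence $Q_n\perp(Q_1,\ldots,Q_{n-1})$. This requires the finer structural content of the fixed-point characterisation of \cite{martin2010fixed}: in the fixed-point regime the class arrival streams are independent Poisson processes of rates $\lambda_1,\ldots,\lambda_n$, so the class-$n$ contribution to the column at any time is a measurable functional of data independent of the class-$1,\ldots,n-1$ arrivals, and hence of $(Q_1,\ldots,Q_{n-1})$. Naive time-rescaling or thinning arguments fail at this step because the class-$1$ busy periods are correlated with the server's Poisson rings; the honest decoupling relies on reversibility of the top-priority M/M/1 queue together with a Burke-type theorem for the departure stream of each class, precisely the package assembled in Section \ref{sec:sq} from the tools of \cite{martin2010fixed}. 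Once the $n=2$ fixed-point theorem and this joint independence are in place, the induction closes and (\ref{Qd}) follows by direct substitution.
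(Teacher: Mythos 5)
Your setup (fixed-point framework, fluxes $\lambda_i$, stability condition) and your two reductions are sound: restricting to classes $1,\dots,n-1$ does give the $(n-1)$-type equilibrium, so induction yields the joint law of $(Q_1,\dots,Q_{n-1})$; and lumping classes $1,\dots,n-1$ into a super-class does reduce to the $2$-type case, yielding the marginal of $Q_n$ and its independence from the \emph{sum} $Q_1+\cdots+Q_{n-1}$. You also correctly identify the principal obstacle. But your resolution of that obstacle rests on a false claim: in the fixed point $F^{(n)}$ the individual class streams are \emph{not} independent Poisson processes. Burke's theorem gives that the superposition of classes $1,\dots,i$ is Poisson of rate $\lambda_1+\cdots+\lambda_i$ for each $i$, but the classes are strongly correlated with one another. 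Indeed, by Proposition \ref{prop:ntn} the type-$n$ points of $F^{(n)}$ are exactly the unused services of a queue fed by $F^{(n-1)}$, hence they are a deterministic functional of the lower-class arrivals and the service process; if the streams were independent Poisson the entire multi-type fixed-point theory would collapse to a classical priority M/M/1 computation, which does not reproduce these product-form Bernoulli-geometric laws. So the decoupling you invoke is unavailable, and knowing the law of $(Q_1,\dots,Q_{n-1})$ together with the law of $(Q_1+\cdots+Q_{n-1},Q_n)$ does not determine the joint law of $(Q_1,\dots,Q_{n-1},Q_n)$.

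The paper closes exactly this gap by a different device: it computes, in two ways, the leading-order asymptotics as $\epsilon\to 0$ of the probability that the departure stream shows $a_1$ ones, then $a_2$ twos, \dots, then $b$ points of type $n$ in $(0,\epsilon)$ --- once viewing $F^{(n)}$ as the output of a rate-$1$ queue fed by $F^{(n)}$ (giving $\mathbb{P}^{(1)}(Q_1=a_1,\dots,Q_{n-1}=a_{n-1},Q_n\geq b)\,\epsilon^{a_1+\cdots+a_{n-1}+b}/(\cdot)!$), and once via Proposition \ref{prop:ntn} as the output of a rate-$(\lambda_1+\cdots+\lambda_n)$ queue fed by $F^{(n-1)}$ with type-$n$ marks at unused services. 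Comparing the two yields the full joint probability as a product, from which both the Bernoulli-geometric form of $Q_n$ and its independence from the whole vector $(Q_1,\dots,Q_{n-1})$ follow simultaneously. This same argument (in its $n=2$ instance) is also what proves the two-type case that you defer to without supplying; some such quantitative input beyond the two lumping/restriction reductions is genuinely needed, so as written your proof is incomplete at its decisive step.
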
	
	
	\begin{rem}
		\label{rem:in_thm_Q_i}
Note that as one should expect from the well-known
geometric i.i.d.\ distribution of the TAZRP,
or basic results on stationary distributions on
stationary distributions of $M/M/1$ queues in series,
the distribution of $Q_1$ is geometric,
with parameter $\lambda_1$.
In general, the sum a geometric and an independent
Bernoulli-geometric is not geometric, but
for particular values of the parameters such
relations do hold, and in this case one obtains,
also as expected, that $Q_1+\dots+Q_i$ is geometric for each $1\leq i\leq n$,
with parameter $\lambda_1+\dots+\lambda_i$.		
We may also intepret $\lambda_i$ as the intensity
at which particles of type $i$ move from site $0$
to site $1$. Also $\lambda_i$ is the probability
that the highest-priority particle at site $0$ is of type $i$.
%		
%		
%		
%		
%	\begin{enumerate}
%		\item Here by $X\sim \text{Geom}(p)$ we mean that $\mathbb{P}(X=x)=(1-p)p^{x-1}$.
%		\item The distribution of $Q_{1}$ is geometric ($\mathbb{P}(Q_1=x)=(1-p)p^{x}$) as one would expect from
%		the well known geometric i.i.d. distribution of the TAZRP or basic
%		results on stationarity of queues.
%		\item There is a calculus of these Bernoulli-geometric distributions, under
%		which adding a geometric to an independent Bernoulli-geometric gives
%		another geometric distribution, if the parameters are appropriate.
%		In that way it works out that $Q_{1}+...+Q_{i}$ is geometric with
%		parameter $\lambda_{1}+...+\lambda_{i}$.
%	\end{enumerate}
\end{rem}

%\begin{thm}\label{thm:mq}
%	For every $0<\lambda_1,...,\lambda_n$ such that
%	\begin{align}
%		\sum_{i=1}^n\lambda_i<1
%	\end{align}
%	 there exists a unique ergodic and stationary distribution $\mu^{\lambda_1,...,\lambda_n}$ of the $n$-type TAZRP under which the random variables $Q_i$ are independent and
%	\begin{align}\label{Qd}
%	Q_{i}\sim \mathrm{Ber}\left(\frac{\lambda_{i}}{1-\left(\lambda_{1}+...+\lambda_{i-1}\right)}\right)\mathrm{Geom}\left(\lambda_{1}+...+\lambda_{i}\right).
%	\end{align}
%	\begin{rem} $\quad$ \\
%		\label{rem:in_thm_Q_i}
%	\begin{enumerate}
%		\item Here by $X\sim \text{Geom}(p)$ we mean that $\mathbb{P}(X=x)=(1-p)p^{x-1}$.
%		\item The distribution of $Q_{1}$ is geometric ($\mathbb{P}(Q_1=x)=(1-p)p^{x}$) as one would expect from
%		the well known geometric i.i.d. distribution of the TAZRP or basic
%		results on stationarity of queues.
%		\item There is a calculus of these Bernoulli-geometric distributions, under
%		which adding a geometric to an independent Bernoulli-geometric gives
%		another geometric distribution, if the parameters are appropriate.
%		In that way it works out that $Q_{1}+...+Q_{i}$ is geometric with
%		parameter $\lambda_{1}+...+\lambda_{i}$.
%	\end{enumerate}
%\end{rem}
%\end{thm}

Now that we have stated the result that the distribution of the speed process $U$ is a stationary measure for the multi-type TAZRP dynamics (Theorem \ref{thm:sd}), we turn to investigating this measure. As each of the second-class particles has speed, the column of speeds $\{U_{0,i}\}_{i\in \mathbb{N}_0}$ can be thought of as a marked point process where the points are the set of speeds in $[0,1]$ attained by the particles at column zero and the mark associated with the point $v\in[0,1]$ is the number of particles attaining a specific speed. The following result characterizes the distribution of a column of the speed process.
%\begin{thm}\label{thm:ocd}
%	Let $U$ be a translation
%	invariant stationary distirbution for the  multi-type TAZRP with
%	types in $[0,1]$ such that the rate of arrival of particles with
%	types in $[x,1]$ is $1-x$ for all $x$ in $[0,1]$. Then the content
%	of column $0$ (under $U$) can be described by a Poisson
%	point process on $[0,1]$ with intensity $1/x$ with each point of
%	the Poisson process contributing $\text{Geom}((1-x))$ particles of
%	type $x$ to the queue.
%\end{thm}
\begin{thm}\label{Poisson pic}
	Let $U$ be the speed process. The distribution of $\left\{ U_{0,i}\right\} _{i\in\mathbb{N}_0}$
	is a marked Poisson process on $[0,1]$, with intensity $\frac{1}{2x}$ and mark distribution $Geom\left(1-\sqrt{x}\right)$. In particular, for a fixed $j>0$, the sequence of speeds $\left\{ U_{0,i}\right\} _{i=j+1}^{\infty}$
	conditioned on $U_{0,j}$ is independent of $\left\{ U_{0,i}\right\} _{i=0}^{j-1}$.
\end{thm}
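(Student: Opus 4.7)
The plan is to characterize the column distribution of the speed process by projecting onto $n$-type TAZRPs and invoking Theorem~\ref{thm:mq}. For any $n\geq 1$ and any partition $0=t_{-1}<t_0<t_1<\cdots<t_{n-1}<t_n=1$, I would define the non-decreasing map $G:[0,1]\to\mathcal{R}_n$ sending $(t_{j-1},t_j]$ to class $-(n+1-j)$ for $j=1,\dots,n$ and $[0,t_0]$ to the hole label $-(n+1)$. By the monotone-relabelling invariance of the multi-type TAZRP (Corollary~\ref{col:SIM}) together with Theorem~\ref{thm:sd}, the coordinate-wise image of $\mu^\pi$ under $G$ is a translation-invariant ergodic stationary distribution of the $n$-type TAZRP, so Theorem~\ref{thm:mq} applies.

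The counts $Q_i=\#\{j:U_{0,j}\in(t_{n-i},t_{n+1-i}]\}$ for $i=1,\dots,n$ are thus independent, with $Q_i\sim\mathrm{Ber}(\lambda_i/(1-\Lambda_{i-1}))\mathrm{Geom}(\Lambda_i)$ for some positive $\lambda_i$ and $\Lambda_i=\lambda_1+\cdots+\lambda_i$. To identify $\lambda_i$, I appeal to Remark~\ref{rem:in_thm_Q_i}: $\lambda_i$ is the probability that the top-priority particle at column~$0$ is of class~$i$, which in the speed process is the maximum-speed particle $U_{0,0}$. Hence $\lambda_i=\PP(U_{0,0}\in(t_{n-i},t_{n+1-i}])$, and from the marginal $\PP(U_{0,0}\leq t)=\sqrt{t}$ (obtained via the TASEP-TAZRP coupling of Section~\ref{sec:couple} and the single-second-class-particle speed result from \cite{G2014}, which applies because the weaker particles above and to the right of $p_{0,0}$ in $\eta^*$ do not affect its motion under the TAZRP rules), I conclude $\Lambda_i=1-\sqrt{t_{n-i}}$.

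A direct generating-function computation (substituting $u=\sqrt{x}$ and then using partial fractions) shows that for the proposed marked Poisson on $(0,1]$ with intensity $\tfrac{1}{2x}$ and marks $\mathrm{Geom}(1-\sqrt{x})$, the count $N((a,b])$ in any interval $(a,b]\subset(0,1]$ has PGF
\[
\mathbb{E}\bigl[z^{N((a,b])}\bigr]=\sqrt{a/b}\cdot\frac{1-z(1-\sqrt{b})}{1-z(1-\sqrt{a})},
\]
which matches the Bernoulli-geometric PGF of $Q_i$ exactly with the parameters just derived. Since the joint law of counts in any finite partition therefore agrees with that of the marked Poisson, and since such joint distributions determine the law of the random measure $\sum_j\delta_{U_{0,j}}$, the column distribution is the claimed marked Poisson process.

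For the independence assertion, conditional on $U_{0,j}=u$ the past $\{U_{0,i}\}_{i=0}^{j-1}$ depends only on the atoms and marks of the Poisson process in $(u,1]$ together with the number of particles at speed $u$ sitting before index $j$, while the future $\{U_{0,i}\}_{i=j+1}^\infty$ depends only on the atoms and marks in $(0,u)$ together with the residual mark at $u$. Spatial independence of the Poisson process combined with the memoryless property of the geometric mark at $u$ then gives that past and future are conditionally independent given $U_{0,j}$. I expect the main obstacle to be the parameter identification in the second paragraph, as it imports the single-second-class-particle speed law from outside the multi-type-queue machinery; the remaining steps are standard generating-function bookkeeping and random-measure characterization.
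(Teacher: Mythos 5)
Your proposal is correct, and its skeleton coincides with the paper's: project the column onto the $n$-type TAZRP via a monotone relabelling, invoke Corollary \ref{col:SIM} and Theorem \ref{thm:mq} to get independent Bernoulli-geometric interval counts, and pin down the parameters through $\lambda_i=\PP(U_{0,0}\in(t_{n-i},t_{n+1-i}])$ together with $\PP(U_{0,0}\le t)=\sqrt{t}$, exactly as in Remark \ref{rem:in_thm_Q_i} and the displayed computation of $\lambda_i$ in the paper's proof. Where you genuinely diverge is the passage from the finite-type description to the continuum statement. The paper fixes the equispaced partition $x_i=1-in^{-1}$, Taylor-expands the Bernoulli and geometric parameters, and takes $n\to\infty$ to read off the intensity $\tfrac{1}{2x}$ and mark law $\mathrm{Geom}(1-\sqrt{x})$ as limits. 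You instead keep the partition arbitrary and verify an \emph{exact} identity: the compound-Poisson PGF
$\mathbb{E}\bigl[z^{N((a,b])}\bigr]=\sqrt{a/b}\,\frac{1-z(1-\sqrt{b})}{1-z(1-\sqrt{a})}$
(which I checked via the substitution $u=\sqrt{x}$ and partial fractions) equals the $\mathrm{Ber}\bigl(1-\sqrt{a/b}\bigr)\mathrm{Geom}(1-\sqrt{a})$ PGF, so the finite-dimensional laws of the counting measure agree with the marked Poisson process for every partition, and the standard fact that these determine an a.s.\ locally finite integer-valued random measure on $(0,1]$ finishes the job. This buys you an exact, limit-free identification that sidesteps the uniformity-of-error bookkeeping in the paper's asymptotic argument; the paper's route has the advantage of exhibiting the Poisson intensity directly as a local limit of Bernoulli success probabilities. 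You also spell out the conditional-independence assertion (splitting at $U_{0,j}=u$ into the restriction to $(u,1]$ and to $(0,u)$ plus the memoryless residual of the geometric mark at $u$), which the paper states as an immediate consequence without proof; your sketch is right, since given $N((u,1])=m\le j$ and $M_u\ge j+1-m$ the residual mark is again geometric independently of $m$. The only point to make rigorous is the zero-probability conditioning on $U_{0,j}=u$, handled by the usual density/Palm argument.
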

Theorem \ref{Poisson pic} shows that the set of values of $U_{0,\cdot}$ accumulates at $0$. We also see that conditioning on some particle attaining the speed $v$, the probability of finding another particle with speed $v$ is positive. Moreover, it gives a Markovian property for the column of speeds.

Note that a related marked-Poisson-process structure
was recently found by Fan and Seppalainen \cite{FanSeppalainen}
in the description of joint distributions of
Busemann functions for the last-passage percolation model
(see for example their Theorem 3.4).

Our next result states the joint distribution of two second-class particles starting at column $0$.
\begin{thm}\label{thm:tdm}
	Let $U$ be the TAZRP speed process
	 and let $f(x)=1-\sqrt{x}$. Then for $i<j$ and $x_1>x_2$
	\[
	\mathbb{P}\left(x_{1}\geq U_{0,i}, x_{2}\geq U_{0,j}\right)=1-f(x_1)^{i+1}-f(x_2)^{j+1}\left(1-\left(\frac{f(x_1)}{f(x_2)}\right)^{i+1}\right),
	\]
	and
	\[
	\mathbb{P}\left(U_{0,i}=U_{0,j}\in dx\right)=(i+1)\frac{f(x)^j}{2\sqrt{x}}dx.
	\]
\end{thm}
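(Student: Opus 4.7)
The plan is to translate both identities into statements about the marked Poisson process provided by Theorem~\ref{Poisson pic}. Let
\[
N(x) := \#\{k\geq 0 : U_{0,k}>x\},
\]
which is the total multiplicity (sum of marks) of the marked Poisson process on $(x,1]$. Since $(U_{0,k})_{k\geq 0}$ is non-increasing, we have $\{U_{0,i}\leq x\} = \{N(x)\leq i\}$, and the joint events appearing in the theorem correspond to $\{N(x_1)\leq i,\; N(x_2)\leq j\}$ and $\{N(y)\leq i,\; N(y)+M\geq j+1\}$ respectively.

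A short compound-Poisson computation gives the marginal law of $N(x)$. If $M(y)\sim\mathrm{Geom}(1-\sqrt{y})$ then $\mathbb{E}[s^{M(y)}] = s\sqrt{y}/(1-s(1-\sqrt{y}))$, and substituting $u=\sqrt{y}$ in
\[
\mathbb{E}[s^{N(x)}] = \exp\!\left(\int_x^1 \frac{1}{2y}\bigl(\mathbb{E}[s^{M(y)}]-1\bigr)\,dy\right)
\]
reduces the exponent to an elementary integral that is evaluated by partial fractions, yielding $\mathbb{E}[s^{N(x)}] = \sqrt{x}/(1-sf(x))$. Thus $\mathbb{P}(N(x)=k) = \sqrt{x}\,f(x)^k$ for $k\geq 0$, and $\mathbb{P}(N(x)\geq m) = f(x)^m$.

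For the first identity I write $N(x_2)=N(x_1)+N'$, where $N'$ is the multiplicity contributed by the interval $(x_2,x_1]$. Independence of the Poisson process on disjoint sets makes $N(x_1)$ and $N'$ independent, and the same computation on $(x_2,x_1]$ (or dividing PGFs) gives $\mathbb{P}(N'\geq m) = \frac{\sqrt{x_1}-\sqrt{x_2}}{\sqrt{x_1}}\,f(x_2)^{m-1}$ for $m\geq 1$. Writing
\[
\mathbb{P}(U_{0,i}\leq x_1,\, U_{0,j}\leq x_2) = \mathbb{P}(N(x_1)\leq i) - \sum_{k=0}^{i}\mathbb{P}(N(x_1)=k)\,\mathbb{P}(N'\geq j+1-k),
\]
and noting that $j+1-k\geq 1$ since $k\leq i<j$, the inner sum is a geometric series in $f(x_1)/f(x_2)$; summing it and simplifying with $f(x_2)-f(x_1)=\sqrt{x_1}-\sqrt{x_2}$ produces the claimed expression.

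For the density of $\{U_{0,i}=U_{0,j}\}$ I apply the Slivnyak--Mecke formula to the marked Poisson process: conditioning on a point at $y$, the remaining configuration is an independent copy of the process and the attached mark $M\sim\mathrm{Geom}(1-\sqrt{y})$ is independent of everything else. The event $\{U_{0,i}=U_{0,j}=y\}$ occurs precisely when $N(y)\leq i$ and $M\geq j+1-N(y)$, since the block of equal speeds produced by the point at $y$ is exactly $U_{0,N(y)},\ldots,U_{0,N(y)+M-1}$. Using independence of $N(y)$ and $M$ together with $\mathbb{P}(M\geq m)=f(y)^{m-1}$, the density at $y$ reduces to
\[
\frac{1}{2y}\sum_{k=0}^{i}\sqrt{y}\,f(y)^k\,f(y)^{j-k} = \frac{(i+1)f(y)^j}{2\sqrt{y}}.
\]
The only genuinely laborious step is the PGF integral producing the law of $N(x)$; once that is in hand, the remaining algebra and the Palm calculation are essentially immediate.
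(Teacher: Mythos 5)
Your proof is correct, but it takes a genuinely different route from the paper. The paper projects the column onto a $2$-type TAZRP via the cut-points $x_1>x_2$, invokes uniqueness of stationary ergodic measures together with Theorem \ref{thm:mq} to identify the law of $(Q_1,Q_2)$ as the independent geometric/Bernoulli-geometric pair with $\lambda_1=1-\sqrt{x_1}$, $\lambda_2=\sqrt{x_1}-\sqrt{x_2}$, and then computes both probabilities by summing over $Q_1$; the diagonal density is obtained by computing $\mathbb{P}(x_1>U_{0,i}\geq U_{0,j}>x_2)$, dividing by $x_1-x_2$ and letting $x_1\to x_2$. You instead take Theorem \ref{Poisson pic} as the starting point: the compound-Poisson PGF computation correctly yields $\mathbb{P}(N(x)=k)=\sqrt{x}\,f(x)^k$ (consistent with Lemma \ref{lem:opm}), the decomposition $N(x_2)=N(x_1)+N'$ with independence over disjoint intervals reproduces exactly the role that the independence of $Q_1$ and $Q_2$ plays in the paper, and the Mecke/Palm computation for the diagonal replaces the paper's limiting argument. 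Both arguments are valid and there is no circularity, since Theorem \ref{Poisson pic} is established before Theorem \ref{thm:tdm}. The trade-off: the paper's route needs only the finite-type fixed-point result and the one-point marginal, so it is logically lighter; your route requires the full marked-Poisson description but, once that is in hand, is cleaner (no $x_1\to x_2$ limit) and extends immediately to the joint law of any finite set of indices in the column. One small point worth making explicit if you write this up: when applying Mecke's formula you should note that the event $\{U_{0,i}=U_{0,j}\}$ is exhausted by points of the process (every speed in the column is a point of the marked process, by Theorem \ref{Poisson pic} and Lemma \ref{Fpro}), and that for a point at $y$ the quantity $N(y)$ depends only on the points in $(y,1]$, so it is unaffected by removing the point at $y$ itself.
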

Theorem \ref{thm:tdm} again says that there is a positive probability for two second-class particles at a column to have the same speed; we also see that conditional on having the same speed, the distribution of the speed has a density.

In general, obtaining results on the joint distribution of two columns is hard. We have the following result in this direction.
\begin{prop}\label{tc}
	Let $U$ be the speed process of the TAZRP, and let $f(x)=1-\sqrt{x}$ and $j,k\in\mathbb{N}$.
	Then,
	\begin{align}
	&\mathbb{P}\left(U_{0,0}>x_1,U_{-1,j-1}>x_{1}>U_{-1,j}>...>U_{-1,j+k-1}>x_{2}\right)\\
	&=\left(f(x_2)-f(x_1)\right)f(x_1)^{j}f(x_2)^{k}.\nonumber
	\end{align}
\end{prop}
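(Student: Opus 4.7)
The plan is to project the speed process to a $2$-type TAZRP and then compute the joint distribution of two adjacent columns in the resulting stationary measure via the queueing framework of Section \ref{sec:sq}.

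First, by Theorem \ref{thm:sd} and the translation-invariance of $\mu$, I work under $\mu^\pi$; in that setting, column $-1$ of $\mu$ corresponds to column $1$, and the event in the proposition involves columns $0$ and $1$ of a stationary multi-type TAZRP. Apply the non-decreasing projection $G:\mathbb{R}\to\{-1,-2,-3\}$ sending $v>x_1$ to $-1$, $v\in(x_2,x_1]$ to $-2$, and $v\leq x_2$ to the ``hole'' label $-3$. By Corollary \ref{col:SIM}, the pushforward of $\mu^\pi$ under $G$ is a stationary distribution of the $2$-type TAZRP with intensities $\lambda_1=f(x_1)$ and $\lambda_2=f(x_2)-f(x_1)$, so that $\lambda_1+\lambda_2=f(x_2)$. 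Writing $Q_i^{(z)}$ for the number of type-$i$ particles at column $z$, and using the (weakly) decreasing order of entries within each column, the event of the proposition reduces to
\[
\bigl\{Q_1^{(0)}\geq 1\bigr\}\cap\bigl\{Q_1^{(1)}=j\bigr\}\cap\bigl\{Q_2^{(1)}\geq k\bigr\}.
\]

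Second, the heart of the argument is to compute this joint probability in the equilibrium obtained from the projection. Theorem \ref{thm:mq} yields the single-column marginal: $Q_1^{(z)}\sim\mathrm{Geom}(\lambda_1)$ is independent of $Q_2^{(z)}\sim\mathrm{Ber}\bigl(\lambda_2/(1-\lambda_1)\bigr)\mathrm{Geom}(\lambda_1+\lambda_2)$ at each fixed $z$. For the cross-column joint, two further applications of Corollary \ref{col:SIM} produce useful product-form marginals: collapsing types $-2$ and $-3$ into a single label shows that the type-$1$ particles alone form a standard TAZRP with rate $\lambda_1$, so $(Q_1^{(z)})_z$ are i.i.d.\ $\mathrm{Geom}(\lambda_1)$; and collapsing types $-1$ and $-2$ into a single label shows that the non-hole particles form a standard TAZRP with rate $\lambda_1+\lambda_2$, so $(Q_1^{(z)}+Q_2^{(z)})_z$ are i.i.d.\ $\mathrm{Geom}(\lambda_1+\lambda_2)$. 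Combining these two product structures with the within-column independence from Theorem \ref{thm:mq} and the queue-theoretic identities of Section \ref{sec:sq}, one arrives at
\[
\mathbb{P}\bigl(Q_1^{(0)}\geq 1,\ Q_1^{(1)}=j,\ Q_2^{(1)}\geq k\bigr)=\lambda_2\,\lambda_1^{\,j}\,(\lambda_1+\lambda_2)^k.
\]
Substituting $\lambda_1=f(x_1)$, $\lambda_2=f(x_2)-f(x_1)$ and $\lambda_1+\lambda_2=f(x_2)$ then gives the claimed formula.

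The hard part will be precisely this joint computation. The two product-form marginals constrain the joint but do not determine it: a direct check at $j=k=1$ shows that adjacent columns of the projected $2$-type TAZRP are positively correlated (so the joint is strictly larger than the product of the two single-column marginals), reflecting the fact that the two marginal $1$-type TAZRPs — for type $1$ alone and for non-holes — share the same Harris clocks. The correct cross-column joint is pinned down by extending the single-column queueing arguments of Section \ref{sec:sq} to a pair of priority queues in series, tracking how a type-$2$ particle sitting above the type-$1$ stack at column $1$ is statistically linked to the presence of a type-$1$ particle at the bottom of column $0$.
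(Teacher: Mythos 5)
Your overall strategy coincides with the paper's: reflect to $\mu^\pi$, project via Corollary \ref{col:SIM} onto a $2$-type TAZRP with $\lambda_1=f(x_1)$, $\lambda_2=f(x_2)-f(x_1)$, and rewrite the event as $\{Q_1^{0}\geq 1,\ Q_1^{1}=j,\ Q_2^{1}\geq k\}$ for two columns of the stationary $2$-type process. That reduction is correct and is exactly how the paper begins.

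The problem is that the entire content of the proposition then sits in the identity
\[
\mathbb{P}^{2,(1)}\bigl(Q_1^{0}\geq 1,\ Q_1^{1}=j,\ Q_2^{1}\geq k\bigr)=\lambda_2\,\lambda_1^{\,j}\,(\lambda_1+\lambda_2)^{k},
\]
and you do not prove it — you correctly observe that the two product-form marginals (for type-$1$ particles alone and for non-holes) do not determine the cross-column joint, acknowledge that this is ``the hard part,'' and then assert the answer. In the paper this is a separate result, Lemma \ref{lem:two_column_distribution}, whose proof is not a routine combination of single-column facts: it extends the rare-event technique of Section \ref{sec:sq} to two queues in tandem by computing the $\epsilon\to 0$ asymptotics of the probability that the downstream departure process exhibits $a$ type-$1$'s, then $b$ type-$2$'s, \emph{then one further type-$1$} in $[0,\epsilon)$ — the trailing type-$1$ is what forces a service at the upstream queue and hence encodes $Q_1^{0}\geq 1$ — and comparing the dominant contributions under the two representations of $F^{(2)}$ (rate-$1$ servers fed by $F^{(2)}$ versus, via Proposition \ref{prop:ntn}, rate-$(\lambda_1+\lambda_2)$ servers fed by $F^{(1)}$ with unused services marked as type $2$). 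The computation involves a genuinely new ingredient relative to the single-column case (the ordering of epochs of the two independent service processes, yielding the $2\epsilon^{a+b+2}/(a+b+2)!$ factor that cancels between the two representations). Without carrying out some version of this two-queue argument, your proposal reduces the proposition to an unproved claim of essentially equal difficulty, so the proof is incomplete at its central step.
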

Take two particles, $p_{0,j}$ and $p_{i,k}$ where $0<i$. Both particles develop speed $v$ and $u$ respectively. On the event that $v=u$, what is the probability that $p_{0,j}$ overtakes $p_{i,k}$? that is, what is the probability that $X_{0,j}(t)>X_{i,k}(t)$ for some $t>0$? Our next result shows that overtaking occurs with probability 1.
\begin{thm}\label{thm:ov}
	Let $U$ be the TAZRP speed process. Suppose $i>0$ and condition on $U_{0,j}\geq U_{i,k}$. Then with probability 1, $p_{0,j}$
	overtakes $p_{i,k}$.
\end{thm}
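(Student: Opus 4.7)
The natural split is between the strict case $U_{0,j}>U_{i,k}$ and the equal-speed case $U_{0,j}=U_{i,k}=v$. In the strict case, $t^{-1}(X_{0,j}(t)-X_{i,k}(t))\to U_{0,j}-U_{i,k}>0$, so $X_{0,j}(t)$ lies linearly far to the right of $X_{i,k}(t)$ for large $t$. The substance of the theorem is the equal-speed case, and I condition on $U_{0,j}=U_{i,k}=v>0$ for the remainder.

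My first step is a deterministic reduction: it suffices to show that $X_{0,j}(t)=X_{i,k}(t)$ at some finite time $t>0$. Indeed, whenever the two particles share a column $c$, the stronger particle $p_{0,j}$ sits strictly below $p_{i,k}$ in the stack, so $p_{i,k}$ cannot jump out of column $c$ before $p_{0,j}$ does (being higher up, $p_{i,k}$ is never at the bottom until $p_{0,j}$ leaves). On the other hand $U_{0,j}=v>0$ means $p_{0,j}$ cannot remain in column $c$ forever, and the moment it does jump we have $X_{0,j}=c+1>c=X_{i,k}$, which is precisely the overtake.

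My second step is the heart of the proof: the gap $Y(t):=X_{i,k}(t)-X_{0,j}(t)$ must reach $0$. Because each Poisson clock ring moves at most one of the two tagged particles, $Y$ is a pure jump process changing by $\pm 1$ at consecutive move events; equal speeds give $Y(t)=o(t)$, and on the no-overtake event $Y$ stays $\geq 1$ for all $t$. To exclude the latter, I would couple the configuration around the moving point $\lfloor vt\rfloor$ with a sample from the stationary measure $\mu^\pi$ (Theorem \ref{thm:sd}) and invoke the marked Poisson column structure of Theorem \ref{Poisson pic}: conditional on the speed $v$ appearing in a column, the multiplicity is $\mathrm{Geom}(1-\sqrt{v})$, which is $\geq 2$ with positive probability. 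So in a typical stationary configuration, same-speed particles share a column, making the scenario ``$Y(t)\geq 1$ for all $t$'' incompatible with the stationary picture. A Borel--Cantelli argument on the independent clock rings promotes the positive meeting probability to $\mathbb{P}(Y(t)=0 \text{ for some } t)=1$.

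The main technical obstacle will be pinning the identities of $p_{0,j}$ and $p_{i,k}$ to specific labels of the stationary marked Poisson process, since the stationary picture talks about ``particles with speed $v$'' abstractly while the dynamics require specific tagged labels. A convenient intermediate reduction is Corollary \ref{col:SIM}: relabel all particles into five classes (stronger than $p_{0,j}$; $p_{0,j}$ alone; strictly between; $p_{i,k}$ alone; weaker) and work with the resulting five-type TAZRP, whose stationary column law is given explicitly by Theorem \ref{thm:mq}. The same-column question then becomes a tractable comparison with a product of Bernoulli--geometric marginals, on which the ergodic/Borel--Cantelli argument above is straightforward to execute.
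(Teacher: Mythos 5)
Your reduction of the strict case and your observation that it suffices for the two tagged particles to share a column at some finite time are both fine (indeed the paper \emph{defines} overtaking as the meeting time being finite). The problem is that the heart of your argument --- ruling out the event that $Y(t)=X_{i,k}(t)-X_{0,j}(t)\geq 1$ for all $t$ --- is asserted rather than proved, and the two tools you invoke do not deliver it. First, Theorem \ref{Poisson pic} and Theorem \ref{thm:mq} are statements about the \emph{distribution of the type-configuration} in a column at a fixed time; the fact that a speed value has geometric multiplicity $\geq 2$ with positive probability says nothing about whether your two \emph{specific tagged} particles ever occupy the same column. A priori the multiplicity-$\geq 2$ events in the stationary picture could always be realised by other pairs, and passing from a distributional statement about columns to a pathwise statement about a tagged pair is exactly the difficulty the theorem is about. (The proposed ``coupling of the configuration around $\lfloor vt\rfloor$ with a sample from $\mu^\pi$'' is a local-equilibrium statement that is itself nontrivial and is not constructed.) Second, the ``Borel--Cantelli argument on the independent clock rings'' is not an argument: the events ``the tagged pair meets during $[n,n+1]$'' are neither independent nor shown to have uniformly positive probability conditional on the past, and no $0$--$1$ law is identified that would upgrade a positive meeting probability to probability one. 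So the proposal has a genuine gap at its central step.

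The paper takes a different route that avoids this issue entirely: it relabels all particles other than $p_{0,j}$ and $p_{i,k}$ into finitely many classes in a way that preserves relative priority with respect to the two tagged particles (using that speeds accumulate only at $0$, so only finitely many ``fast'' particles per intermediate column need to be promoted to first class), maps the resulting configuration to a multi-type TASEP via the second-class-particle coupling of Subsection \ref{ssec:scc} (valid up to the meeting time, which is all that is needed), and then quotes \cite[Theorem 1.14]{amir2011tasep}, the already-established TASEP result that two same-speed second-class particles a.s.\ swap; an induction on the column separation $i$ and a positive-probability comparison between configurations complete the proof. If you want to avoid importing the TASEP result, you would essentially have to reprove its ergodic-theoretic core for the TAZRP, which is substantially more than the sketch you give.
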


\section{the coupling between  TASEP and  TAZRP}\label{sec:couple}

\subsection{The basic coupling}\label{ssec:CTZ}
We begin by describing a coupling between the exclusion and zero-range process on $\mathbb{Z}$. There are, in fact, two natural ways to define such a coupling,
particle-hole and particle-particle, and both work for ASEP-AZRP as well.
In the particle-hole coupling, each particle in the TASEP configuration will correspond to a column in the TAZRP, and consecutive holes between particles in the TASEP correspond to particles sitting in the same column (the column that corresponds to the first particle to their left). The advantage of this coupling is that clocks on the particles in the TASEP correspond naturally to clocks on the sites (columns) in the TAZRP, though the direction of movement is reversed. This coupling was originally introduced by Kipnis in \cite{K.}, where he used it to relate several observables between TASEP and TAZRP, e.g. the position of a  tagged particle at time $t$ in the TASEP with the current through a bond up to time $t$ in the TAZRP.\\
We will be more interested in the particle-particle coupling as it can be generalized to deal with second-class particles.
In the particle-particle coupling, each hole in the TASEP configuration corresponds to a column in the TAZRP, and the particles between consecutive holes become particles sitting in the column corresponding to the first hole to their right. As this is the coupling we plan to use, we describe it more rigorously.
Let $\xi_t$ be a TASEP configuration. Denote by $\{y_i(0)\}_{i\in\mathbb{Z}}$  the positions of all the holes at time $0$, ordered so that $...<y_{-1}(0)<y_{0}(0)<y_{1}(0)<..$ , with $y_0(0)$ denoting the first hole in position $> 0$ at time $0$. Let $y_i(t)$ denote the position of the $i$'th hole at time $t$. We construct a configuration $\eta_t$ from
$\xi_t$ by setting $\eta_t(i)=y_{i+1}(t)-y_i(t)-1$.
It is not hard to check that under this coupling $\eta_t$ follows TAZRP dynamics, and that, in fact, the clocks on the columns correspond to the clocks of the particles in the TASEP that may indeed jump. We denote by $\Phi:\mathcal{Y}\rightarrow \mathcal{X}$ (Figure \ref{fig:tzm}) the mapping between TASEP and TAZRP configurations described above.
%Below we see two initial configurations for TASEP (left hand side) and the corresponding configuration in TAZRP (right hand side). The first class particles will be represented by $\CIRCLE$; holes will be represented by $\Circle$. The particle at the origin is underlined.
%
%
%\vspace{0.2cm}
%
%
%
%
%\hspace{3mm}
%$ \CIRCLE\hspace {3.5mm} \CIRCLE $
%
%
%$\ldots \CIRCLE \CIRCLE{\underline{\CIRCLE}}\CIRCLE \Circle \Circle \CIRCLE \Circle \ldots$ \hspace{5mm}
%$\ldots \Circle \CIRCLE \CIRCLE \Circle  \CIRCLE \Circle \underline{\CIRCLE} \CIRCLE \Circle \CIRCLE \Circle \Circle \Circle\CIRCLE \Circle \Circle \ldots$
\begin{figure}[t!]
	\centering%
	\begin{tikzpicture}[scale=0.4, every node/.style={transform shape}]
	\def\h{5}
	\def\y{1.5}
	\def\t{-1.5}
	%	\foreach \x in {-5,...,2}
	%{
	%	\node [scale=\y][above] at (\x,-1.5) {$\x$};
	%};

	%second step
	\node [scale=2][above] at (0,-0.7) {$0$};
	\node [scale=2.5][above] at (-4,0) {$\CIRCLE$};
	\node [scale=2.5][above] at (-3,0) {$\Circle$};
	\node [scale=2.5][above] at (-2,0) {$\CIRCLE$};
	\node [scale=2.5][above] at (-1,0) {$\CIRCLE$};
	\node [scale=2.5][above] at (0,0) {$\Circle$};
	\node [scale=2.5][above] at (1,0) {$\CIRCLE$};
	\node [scale=2.5][above] at (2,0) {$\CIRCLE$};
	\node [scale=2.5][above] at (3,0) {$\Circle$};
	\node [scale=2.5][above] at (4,0) {$\Circle$};

	%third step
	\node [scale=2][above] at (10,-0.7) {$0$};	
	\node [scale=2.5][above] at (8,0) {$\CIRCLE$};
	
	\node [scale=2.5][above] at (9,0) {$\CIRCLE$};
	\node [scale=2.5][above] at (9,1) {$\CIRCLE$};
	
	\node [scale=2.5][above] at (10,0) {$\CIRCLE$};
	\node [scale=2.5][above] at (10,1) {$\CIRCLE$};
	%\node [scale=2.5][above] at (10,-3) {$\circledast$};
	\node [scale=2.5][above] at (11,0) {$\Circle$};
	\draw [->,>=stealth] (5.5,1) -- (6.5,1);
	\node [scale=2.5][above] at (5.75,1.25) {$\Phi$};
	\end{tikzpicture}
	\caption{\small The mapping $\Phi$ between the TASEP and the TAZRP.}
	\label{fig:tzm}
\end{figure}
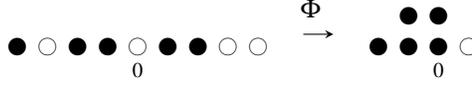
\vspace{0.15cm}
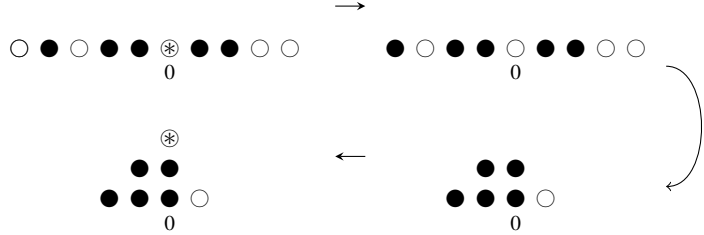
\begin{figure}[ht!]
	\centering%
	\begin{tikzpicture}[scale=0.4, every node/.style={transform shape}]
	\def\h{5}
	\def\y{1.5}
	\def\t{-1.5}
	%	\foreach \x in {-5,...,2}
	%{
	%	\node [scale=\y][above] at (\x,-1.5) {$\x$};
	%};
	\node [scale=2][above] at (0+\t,-0.7) {$0$};
	\node [scale=2.5][above] at (-5+\t,0) {$\Circle$};
	\node [scale=2.5][above] at (-4+\t,0) {$\CIRCLE$};
	\node [scale=2.5][above] at (-3+\t,0) {$\Circle$};
	\node [scale=2.5][above] at (-2+\t,0) {$\CIRCLE$};
	\node [scale=2.5][above] at (-1+\t,0) {$\CIRCLE$};
	\node [scale=2.5][above] at (0+\t,0) {$\circledast$};
	\node [scale=2.5][above] at (1+\t,0) {$\CIRCLE$};
	\node [scale=2.5][above] at (2+\t,0) {$\CIRCLE$};
	\node [scale=2.5][above] at (3+\t,0) {$\Circle$};
	\node [scale=2.5][above] at (4+\t,0) {$\Circle$};
	\draw [->,>=stealth] (4,2) -- (5,2);
	\node [scale=2.5][above] at (-5+\t,0) {$\Circle$};
	%second step
	\node [scale=2][above] at (10,-0.7) {$0$};
	\node [scale=2.5][above] at (6,0) {$\CIRCLE$};
	\node [scale=2.5][above] at (7,0) {$\Circle$};
	\node [scale=2.5][above] at (8,0) {$\CIRCLE$};
	\node [scale=2.5][above] at (9,0) {$\CIRCLE$};
	\node [scale=2.5][above] at (10,0) {$\Circle$};
	\node [scale=2.5][above] at (11,0) {$\CIRCLE$};
	\node [scale=2.5][above] at (12,0) {$\CIRCLE$};
	\node [scale=2.5][above] at (13,0) {$\Circle$};
	\node [scale=2.5][above] at (14,0) {$\Circle$};
	\draw[->] (15,0)  to [out=0,in=0, looseness=1] (15,-4);
	%third step
	\node [scale=2][above] at (10,-5.7) {$0$};	
	\node [scale=2.5][above] at (8,-5) {$\CIRCLE$};
	
	\node [scale=2.5][above] at (9,-5) {$\CIRCLE$};
	\node [scale=2.5][above] at (9,-4) {$\CIRCLE$};
	
	\node [scale=2.5][above] at (10,-5) {$\CIRCLE$};
	\node [scale=2.5][above] at (10,-4) {$\CIRCLE$};
	%\node [scale=2.5][above] at (10,-3) {$\circledast$};
	\node [scale=2.5][above] at (11,-5) {$\Circle$};
	\draw [->,>=stealth] (5,-3) -- (4,-3);
	%\forth step
	\node [scale=2][above] at (0+\t,-5.7) {$0$};	
	\node [scale=2.5][above] at (\t-2,-5) {$\CIRCLE$};
	
	\node [scale=2.5][above] at (\t-1,-5) {$\CIRCLE$};
	\node [scale=2.5][above] at (\t-1,-4) {$\CIRCLE$};
	
	\node [scale=2.5][above] at (0+\t,-5) {$\CIRCLE$};
	\node [scale=2.5][above] at (0+\t,-4) {$\CIRCLE$};
	\node [scale=2.5][above] at (0+\t,-3) {$\circledast$};
	\node [scale=2.5][above] at (1+\t,-5) {$\Circle$};
	\end{tikzpicture}
	\caption{\small The four steps in mapping a TASEP configuration with a second-class particle to a TAZRP configuration with a second-class particle.}\label{fig:TZS}
\end{figure}
\subsection{The coupling with second-class particle}\label{ssec:scc} In \cite{G2014}, Gon\c{c}alves generalized the coupling discussed in the preceding subsection and introduced a coupling between the TASEP and the TAZRP where the configurations in both dynamics have one second-class particle. Let $\xi_0\in\mathcal{Y}$ be a TASEP configuration with a second-class particle $q_2$, and $i\in\mathbb{Z}$ be the index of the first hole to the right of $q_2$. Replace the second-class particle $q_2$ with a hole to obtain the configuration $\xi'\in \mathcal{Y}$, so that $\Phi(\xi')=\eta'\in\mathcal{X}$. Finally, put a second-class particle $p_2$ on top of the $i$'th column in $\eta'$ to obtain $\eta_0$. The mapping (Figure \ref{fig:TZS}) just defined is a bijection between TASEP and TAZRP configurations with second-class particles so that throughout the dynamics of $\xi_t$ (TASEP starting from $\xi_0$) and $\eta_t$ (TAZRP starting from $\eta_0$) the position of the second-class particle in the TAZRP can tell the flux of holes seen by the second-class particle in the TASEP. The position, and hence the speed of the second-class particle $p_2$, can be found by considering the flux of holes passing across the second-class particle $q_2$ in the TASEP configuration. Let
\begin{align}
H_{2}^{tasep}(t,\xi_0)=\inf\{i:\text{the hole $y_i$ is to the right of $q_2$}\}
\end{align}
be the number of holes that have crossed the second-class particle $q_2$  under TASEP dynamics  starting from $\xi_0$ (here we assume that the $q_2$ was positioned at time $t=0$ between $y_{-1}$ and $y_{0}$). It is not hard to verify that $X_{p_2}(t)=H^{tasep}_2(t,\xi)$, i.e. the position of the particle $p_2$ equals the flux of holes crossing $q_2$. Let $\xi_0$ be a Riemann initial data; that is, the limits
\begin{align}
\rho:=\lim_{m\rightarrow\infty}\frac 1m\sum_{k=-m}^{-1}\xi_0\left(k\right)\quad\text{and}\quad\lambda:=\lim_{m\rightarrow\infty}\frac 1m\sum_{k=0}^{m}\xi_0\left(k\right)
\end{align}
exist.
It was shown in \cite{F.M.P.}[Proposition 2.2 and Theorem 3] that
\begin{align}\label{sf}
\lim_{t\rightarrow{+\infty}}\frac{H_{2}^{tasep}(t,\xi_0)}{t}=\Big(\frac{1+\mathcal{U}}{2}\Big)^2 \quad \textrm{ almost  surely},
\end{align}
where $\mathcal{U}$ is the speed of the second-class particle $q_2$. It now follows  that the speed of the second-class particle $p_2$ starting from a Riemann initial condition equals $\Big(\frac{1+\mathcal{U}}{2}\Big)^2$. In \cite{F.M.P.}, it was also shown that when the initial configuration is i.i.d. on either side of the origin and $\lambda<\rho$ then  $\mathcal{U}$ has uniform distribution on the interval $[1-2\rho,1-2\lambda]$.  In this paper, we are mostly interested in the case where $\lambda=1$ and $\rho=0$ which corresponds, by the coupling above, to the TAZRP starting with infinitely many particles at site $-1$, a second-class particle at site $0$ and holes on all positive sites. In this case, the distribution of the speed $U$ of $p_2$ is given by
\begin{align}\label{dou}
	&\mathbb{P}(U\leq v)=\sqrt{v}\quad v\in [0,1].
\end{align}
  \\
The coupling described above between configurations with second-class particles can be extended to configurations with finitely many second-class particles up to the point in time where one second-class particle attempts a jump to a site where there is another second-class particle. Let $\xi$ be a TASEP configuration with $m$ second-class particles such that between the positions of any two second-class particles there is at least one hole. This corresponds in the TAZRP to the case where each column has at most one second-class particle. First we register in $\{i_j\}_{j=1}^m$ the indices of the holes to the right of each second-class particle. Then replace the second-class particles with holes, apply the mapping $\Phi$ on the new configuration and finally place second-class particles at the top of columns $i_1,...,i_m$. We will use this coupling in the proof of Theorem \ref{thm:ov}.
\begin{rem}
	In \cite{CP}, in the context of last-passage percolation, Cator and Pimentel obtained the distribution of the speed of a second-class particle in any Riemann initial condition. Using the coupling in Subsection \ref{ssec:CTZ} one can translate the results in \cite{CP} to results for the distribution of the speed of a second-class particle in the TAZRP starting from a larger set of initial conditions.
\end{rem}
Before we turn to the proof of Lemma \ref{Fpro}, we note that it is a straightforward consequence of (\ref{sf}). Nevertheless, for the sake of self-containment we give here a proof that uses only the results in \cite{F.M.P.}, that the second-class particle positioned at the origin between all particles to the left and holes to its right has a speed with probability 1, and that this speed is $>-1$ a.s. .
\begin{proof}[Proof of Lemma \ref{Fpro}]
	$\,$\\\textbf{Step 1:} We first show that
	the particle $p_{0,0}$ (the particle located at the bottom of the
	$0$'th column) has a speed. By calling $p_{0,0}$ a $2$nd class particle, all particles  to its left are 1st class compared to it as they are of higher class. Similarly all particles to its right or above it are seen as  $3$rd class (holes), and using the coupling of the TAZRP with the TASEP we get to the TASEP configuration
	
	\[
	\ldots111123333\ldots.
	\]
	It was shown in \cite{MG,F.M.P.} that particle $2$ has speed $\mathcal{U}$ a.s., and as explained in Subsection \ref{ssec:scc}, the speed $U_{0,0}$ equals
	 \begin{align}
	 	U_{0,0}=\Big(\frac{1+\mathcal{U}}{2}\Big)^2.
	 \end{align}
And in particular it is strictly positive with probability $1$.
	\\
	\textbf{Step 2:} We now claim that
	particle $p_{0,i}$ also develops a strictly positive speed for all $i>0$. Consider
	the event that $i-1$ particles jump from column -1 to 0 before any
	other jump is made in columns -2 and 0. This event has positive probability,
	and if it happens we reach  a configuration where at site $0$
	we have
	\[
	\begin{array}{c}
	\vdots\\
	p_{0,0}\\
	p_{-1,-\left(i-1\right)}\\
	\vdots\\
	p_{-1,0}
	\end{array}.
	\]
	Under this event $p_{0,0}$ sees the same environment as $p_{0,i}$
	sees at $\eta^{*}$ (recall (\ref{initial condition})), that is, all the particles below it or to its left are first class particles while all particles to its right or above it are holes. Thus if $p_{0,i}$ has positive probability not
	to have a strictly positive speed under $\eta^{*}$, then so does $p_{0,0}$, contradicting
	the fact that with probability $1$ $p_{0,0}$ has a strictly positive speed.
\end{proof}
%In this section we start by recalling  a relation  which holds in the one-dimensional case, between the nearest-neighbor simple exclusion and a zero-range process, introduced by
%Kipnis in \cite{K.}.   The totally asymmetric zero-range process that will be mapped with the TASEP, has space state $\mathcal X$ and generator
%defined on local functions by
%\begin{equation*}
%{L}^{ZR} f(\eta)=\sum_{x\in{\mathbb{Z}}}1_{\{\eta(x)\geq{1}\}}[f(\eta^{x,x-1})-f(\eta)],
%\end{equation*}
%where $\eta^{x-1,x}$ is given above.
We end this section with a proof of Lemma \ref{lem:Esum=1}.
\begin{proof}[Proof of Lemma \ref{lem:Esum=1}]
We will use the mass transport principle (See e.g. \cite{PerLy} Chapter $8$).
For each $k\geq 0$ and any $t$ define a mass-transport function $f^k_t : \Z \times \Z \rightarrow [0,\infty]$ by
\[
f^k_t(z,y)=\begin{cases}
\sum_{i=0}^k \mathrm{1}_{X_{z,i}(t)>y} & y\geq z\\
0 & y<z.
\end{cases}
\]
%$f^k_t(z,y)=\sum_{i=0}^k 1_{X_{z,i}>y}$ if $y\geq z$ and $f^k_t(z,y)=0$ if $y<z$.
One may think of this as each of the first $k$ particles in every column sending a mass of $1$ to each position they jump from up to time $t$.
Define $Z^k_t=\frac{1}{t} \sum_{y=-\infty}^{\infty} f^k_t(0,y)$ and $Y^k_t=\frac{1}{t}\sum_{z=-\infty}^{\infty} f^k_t(z,0)$.
$Y^k_t$ is just the averaged rate at which particles of (initial) height $\leq k$ jumped from $0$, and therefore $\mathbb{E}[Y^k_t]\leq 1$ and $\lim_{k\rightarrow \infty} \mathbb{E}[Y^k_t]=1$ for all $t$.
On the other hand, $\lim_{t\rightarrow\infty} Z^k_t=\sum_{i=0}^k U_{0,i}$. Since the distribution of $f^k_t$ is translation invariant, the mass-transport principle gives that $\mathbb{E}[Z^k_t]=\mathbb{E}[Y^k_t]$, and therefore $\sum_{i=0}^\infty \mathbb{E}[U_{i,0}] =1$.

We now turn to $\inf_{i\geq 0}U_{z,i}$. Fix any $\epsilon>0$. Since $U_{z,i}\geq 0$, by Markov's inequality $\PP(U_{z,i})>\epsilon)<\frac{\mathbb{E}[U_{z,i}]}{\epsilon}$, and therefore $\sum_{i\geq 0}\PP(U_{z,i}>\epsilon) < \infty$. 
By the Borel-Cantelli Lemma this a.s. happens only for finitely many $i$-s, and therefore $\PP(\inf_{i\geq 0}U_{z,i}\leq \epsilon)=1$. Since $\epsilon$ was arbitrary, we are done.
\end{proof}

\section{stationarity of the TAZRP speed process}\label{sec:odd}

The key to understanding why the distribution of $U$
(or more precisely, its reflection)
gives a stationary distribution for the multi-type TAZRP
is to understand the effect of a small change
to the initial condition $\eta^{*}$ on the speed process.

Specifically, in the next lemma we consider how the speed process starting from $\sigma_0\eta^*$ is different from that starting from $\eta^*$, where $\sigma_x$ is the operator given in (\ref{sig}) and $\eta^*$ is the initial condition given in (\ref{initial condition}). More precisely,  consider the two initial conditions $\eta(0)=\eta^*$ and   $\eta'(0)=\sigma_0\eta^*$. Let $\mathcal{T}$ be a Poisson process on $\mathbb{Z}\times\mathbb{R}_{+}$, representing the different clocks on the sites of $\mathbb{Z}$. Apply now the Harris construction with $\mathcal{T}$ to the two initial conditions  $\eta(0)$ and $\eta'(0)$. Let $U$ be the speed process associated with the process $\eta$ as defined in Definition \ref{def}. We define the speed process $U'$ to be the speed process associated with the process $\eta'(t)$, that is
\begin{align}
	U'_{z,i}=\lim_{t\rightarrow\infty}t^{-1}X_{z,i}(t),
\end{align}
 where we assume the dynamics starts from $\eta'$.
It is important to note that particles in $\eta(0)$ and $\eta'(0)$ are the same particles indexed by $\mathbb{Z}\times \mathbb{N}$ according the their position in $\eta(0)$. More precisely, the particle $\eta^*(0,0)$ is identified with $(0,0)$ and its class is the number $\eta^*(0,0)$. The operation $\sigma_0$ will move the particle $(0,0)$ and place it at the bottom of column $1$ (this is due to the initial order imposed on $\eta^*$ where every particle in the $0$'th column is stronger than any particle in column $1$). However it is important to note that we still identify this particle by its initial position in $\eta^*$, i.e. $(0,0)$. This means that the position of the particle $(0,0)$ in $\eta^*$ is $X_{0,0}(0)=0$ whereas its position in $\sigma_0\eta^*$ is $X'_{0,0}(0)=1$. Hence, the  arrays $\{U_{z,i}\}_{(z,i)\in\mathbb{Z}\times \mathbb{N}_0}$ and $\{U_{z,i}'\}_{(z,i)\in\mathbb{Z}\times \mathbb{N}_0}$ register the speed of particle $(0,0)$ at position $(0,0)$ of the array, despite the fact that $X'_{0,0}(0)=1$.
\begin{lem}
\label{lem:effect_jump}Let $\eta$ and $\eta'$ be two TAZRPs
defined by the Harris construction with initial condition $\eta^{*}$and
$\sigma_{0}\eta^{*}$ respectively, and a Poisson process $\mathcal{T}$ on $\mathbb{Z}\times\mathbb{R}_{+}$.
Let $U$ and $U'$ be the TAZRP speed processes associated with $\eta$
and $\eta'$ respectively, then
\begin{equation}\label{eq: the effect of a jump-2}
\sigma_{0}^{*}U=U'.
\end{equation}
\end{lem}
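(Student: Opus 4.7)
The strategy is to run both processes under the common Harris clock ensemble $\mathcal{T}$ and to deduce the identity pathwise. The starting point is the observation that, since both $\eta^*$ and $\sigma_{0}\eta^*$ place infinitely many particles at every site, the two processes have identical \emph{single-type} (queue-length) dynamics: every clock ring produces a jump in both $\eta$ and $\eta'$, and the cumulative flux across every edge $(z,z+1)$ agrees for all $t>0$ (with the sole exception of edge $(0,1)$, which in $\eta'$ carries one additional ``phantom'' jump coming from the $\sigma_0$ performed at $t=0$). The two processes therefore differ only in which labeled particle occupies which position.

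With that in hand, I would first dispose of the case $z\notin\{0,1\}$. A particle $p_{z,i}$ with $z>1$ starts at the same site in both processes, and its trajectory is affected by the initial discrepancy only through the incoming flux from column $z-1$, which is eventually the same by the single-type equivalence above. A coupling argument then yields a bounded (in fact $O(1)$) discrepancy between $X_{z,i}(t)$ and $X'_{z,i}(t)$, hence $U'_{z,i}=U_{z,i}$. The case $z<0$ is even simpler, since the particles stronger than $p_{z,i}$ are untouched by $\sigma_0$.

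Next I would treat column $1$. In $\eta'$ the particle $p_{1,i}$ starts at column $1$, position $i+1$, with the ``super-strong'' particle $p_{0,0}$ sitting at position $0$; in $\eta$ the particle $p_{1,i+1}$ starts at the same column and position, with $p_{1,0}$ (the strongest particle of column $1$) at position $0$. Although the two ``blockers'' have different classes, the dynamics cares only about the \emph{relative} order of particles that ever visit the column, and asymptotically this order is equivalent in the two configurations: all the particles $\{p_{0,\cdot}\}\cup\{p_{-k,\cdot}:k\geq 1\}$ eventually arrive at column $1$ and sort themselves below $p_{1,i}$ in $\eta'$ (respectively $p_{1,i+1}$ in $\eta$) in the same relative order. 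A coupling argument along these lines gives $U'_{1,i}=U_{1,i+1}=(\sigma_{0}^{*}U)_{1,i}$.

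Finally, I would treat column $0$. The case $(z,i)=(0,0)$ is handled by noting that $p_{0,0}$ in $\eta'$ is merely ``one $(0,1)$-step ahead'' of $p_{0,0}$ in $\eta$, a bounded offset yielding $U'_{0,0}=U_{0,0}$. For $i\geq 1$, the particle $p_{0,i}$ in $\eta'$ is shifted one slot downwards (to position $i-1$) from its $\eta^*$-position, and by an analogous coupling argument its asymptotic speed matches the appropriately shifted column-$0$ speed in $\eta$. The delicate point--and in my view the main obstacle--is to identify the slot at which $U_{1,0}$ surfaces among the column-$0$ speeds of $U'$: since $p_{0,0}$ has been moved out of column $0$ by $\sigma_0$, the limiting column-$0$ speeds of $U'$ must absorb one extra value, and this value turns out to be exactly $U_{1,0}$, placed at the sorted position $k=i_{sort^{0}}(U_{1,0})$. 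Lemma \ref{lem:Esum=1} (via $\inf_i U_{0,i}=0$ a.s.) together with Lemma \ref{Fpro} ($U_{1,0}>0$ a.s.)\ ensures that $k$ is finite almost surely, so that the insertion is well-defined. Assembling the four cases produces $U'_{z,i}=(\sigma_{0}^{*}U)_{z,i}$ for every $(z,i)$, which is the desired identity.
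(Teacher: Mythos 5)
Your decomposition into the four cases and the target identities you write down for each (identity off columns $0,1$; a downward shift in column $1$; insertion of $U_{1,0}$ into column $0$ at its sorted position) match what the paper ultimately establishes, and you correctly isolate the finiteness input ($U_{1,0}>0$ while $U_{0,i}\to 0$). But the proposal has a genuine gap: every case is resolved by an unspecified ``coupling argument,'' and the one step you yourself flag as the main obstacle --- showing that the extra speed appearing in column $0$ of $U'$ is exactly $U_{1,0}$, inserted at position $i_{sort^0}(U_{1,0})$ --- is then settled by assertion (``this value turns out to be exactly $U_{1,0}$''). That assertion is the content of the lemma, not a consequence of the setup. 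Relatedly, your claim that $p_{0,0}$ in $\eta'$ is ``one $(0,1)$-step ahead'' with a bounded offset is not correct as stated: before $p_{0,0}$ (in $\eta$) catches up with the displaced slot, its position in $\eta'$ coincides with the position of $p_{1,0}$ in $\eta$, so the discrepancy is $X_{1,0}(t)-X_{0,0}(t)$, which is not a priori bounded; whether and when it collapses to zero is precisely the interaction analysis you are missing. The same issue recurs at columns $z\geq 2$: the labelled particles arriving there genuinely differ between the two processes (e.g.\ $p_{0,0}$ versus $p_{1,0}$), and one must argue that this difference is invisible to $p_{z,i}$ because the two arrivals are equivalently ordered relative to it.

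The paper supplies the missing mechanism with a single auxiliary construction, the \emph{sorting process}: run both configurations simultaneously as an array of pairs $\overline{\eta}_{z,i}=(\eta^*(z,i),\sigma_0\eta^*(z,i))$, where a pair records which particle occupies a given slot in each process. A pair that is coordinatewise ordered with respect to all other pairs never ``interacts,'' i.e.\ that slot is occupied consistently in both processes forever; initially only the pairs in column $0$ and the pair at $(1,0)$ are unordered. One then tracks the resulting finite cascade of interactions: $(p_{0,0},p_{0,1})$ meets $(p_{1,0},p_{0,0})$ and resolves into the permanently ordered pair $(p_{0,0},p_{0,0})$ plus a new defect $(p_{1,0},p_{0,1})$, and so on up to a random index $i_{fast}<\infty$, after which no interactions occur and the exact position identities (hence the speed identities) can be read off. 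To complete your proof you would need to supply an argument of this kind --- some explicit bookkeeping of which labelled particle occupies which slot in the two coupled processes and a proof that the discrepancy set stabilises in finite time --- rather than invoking a coupling whose existence is what has to be shown.
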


In order to prove
the lemma we shall make use of  a process we call the sorting process. The configuration $\overline{\eta}$ of the sorting process compares two initial configurations $\eta$ and $\xi$ of the multi-type TAZRP. Applying the dynamics of the sorting process on the initial configuration keeps track of the development of the two initial configurations $\eta$ and $\xi$ when one applies on them the same Poisson clocks in the Harris construction.\\ For $\left(x_{1},y_{1}\right),\left(x_{2},y_{2}\right)\in\mathbb{R}^{2}$
we write $\left(x_{2},y_{2}\right)\leq\left(x_{1},y_{1}\right)$ whenever
$x_{1}\geq x_{2}$ and $y_{1}\geq y_{2}$. Let
\begin{align}
	\mathcal{W}=\left\{ \overline{\eta}\in\left(\mathbb{R}^{2}\right)^{\mathbb{Z}\times\mathbb{N}_{0}}:\overline{\eta}_{z,i}\leq\overline{\eta}_{z,j},\text{\text{ for all \ensuremath{j\leq i} and \ensuremath{z}}}\right\}.
\end{align}
Note that $\mathcal{W}$ is simply an array indexed by $\mathbb{Z}\times\mathbb{N}_{0}$  that contains pairs of real numbers. We say that $\left(x_{1},y_{1}\right),\left(x_{2},y_{2}\right)$ are
ordered if either $\left(x_{1},y_{1}\right)\leq\left(x_{2},y_{2}\right)$
or $\left(x_{1},y_{1}\right)\geq\left(x_{2},y_{2}\right)$, otherwise
we say that they are unordered. Let $\overline{\eta}\in \mathcal{W}$, and for $k\in\{1,2\}$ let $\overline{\eta}_{z,i}^k$ denote the $k$'th component of the pair $\overline{\eta}_{z,i}$. We attach independent Poisson clocks of rate 1 to
each site (column) $x\in \mathbb{Z}$, at the ring of the clock of the column $x$, the largest (with respect to the order on pairs)
pair sitting at the bottom of the column, jumps to the column to its right where the pairs rearrange into
elementwise order. More precisely, if the pair $\overline{\eta}_{z,0}$
jumps to column $z+1$, then we arrange the sets of numbers
\begin{align}
	A&=\overline{\eta}_{z,0}^1\cup\left\{ \overline{\eta}_{z+1,i}^1\right\} _{i\in\mathbb{N}_{0}}\\
	B&=\overline{\eta}_{z,0}^2\cup\left\{ \overline{\eta}_{z+1,i}^2\right\} _{i\in\mathbb{N}_{0}},\nonumber
\end{align}
according to their order to obtain the decreasing sequences $\left\{ a_{i}\right\} _{i=0}^{\infty}$
and $\left\{ b_{i}\right\} _{i=0}^{\infty}$. Then replace the column
$\overline{\eta}_{z+1,\cdot}$ by a new column whose $i$'th element
is $\left(a_{i},b_{i}\right)$. We call this process on $\mathcal{W}$
the \emph{sorting process.} We say the pairs $\left(x_{1},y_{1}\right)=\overline{\eta}_{z,0}\left(t-\right)$
and $\left(x_{2},y_{2}\right)\in\overline{\eta}_{z+1,\cdot}\left(t-\right)$
interact if the jump of the pair $\left(x_{1},y_{1}\right)$ at time
$t$ to column $z+1$ results in $\left(x_{2},y_{2}\right)\notin\overline{\eta}_{z+1,\cdot}\left(t\right)$.
Note that $\left(x_{1},y_{1}\right)$ interacts only with pairs in
$\overline{\eta}_{z+1,\cdot}\left(t-\right)$ that are unordered with
respect to itself (see Figure \ref{fig:sp}). We make the following observations:
\begin{enumerate}
\item If $\left(x,y\right)$ is a pair in $\overline{\eta}$ that is ordered
with respect to all other pairs in $\overline{\eta}$, then $\left(x,y\right)$
will not interact throughout the dynamics.
\item If $\eta,\xi\in\mathcal{Z}$, then $\overline{\eta}_{z,i}=\left(\eta(z,i),\xi(z,i)\right)\in\mathcal{W}$.
\end{enumerate}
\begin{figure}[t!]
	\centering%
	\begin{tikzpicture}[scale=0.4, every node/.style={transform shape}]
	\node [scale=2][above][red] at (0,0) {$(5,7)$};
	\node [scale=2][above] at (0,2) {$(3,4)$};
	\node [scale=2][above] at (0,4) {$(2,0)$};
	\node [scale=2][above] at (0,5.5) {$\vdots$};
	%next column
	\node [scale=2][above] at (3,0) {$(10,8)$};
	\node [scale=2][above][red] at (3,2) {$(4,8)$};
	\node [scale=2][above] at (3,4) {$(3,1)$};
	\node [scale=2][above] at (3,5.5) {$\vdots$};
	\draw[->] (0,1)  to [out=60,in=160, looseness=1] (2.3,1.7);
	%next column
	\draw [->,>=stealth] (5,2.5) -- (6,2.5);
	%next column
	\node [scale=2][above] at (8,0) {$(3,4)$};
	\node [scale=2][above] at (8,2) {$(2,0)$};
	\node [scale=2][above] at (8,3.5) {$\vdots$};
	%next column
	\node [scale=2][above] at (11,0) {$(10,8)$};
	\node [scale=2][above][red] at (11,2) {$(5,8)$};
	\node [scale=2][above][red] at (11,4) {$(4,7)$};
	\node [scale=2][above] at (11,6) {$(3,1)$};
	\node [scale=2][above] at (11,7.5) {$\vdots$};
	\end{tikzpicture}
	\caption{\small Illustration of the sorting process dynamics. The particle $(5,7)$ interacts with the only particle that is not ordered with respect to, $(4,8)$, to create two new particles - $(4,7)$ and $(5,8)$.}\label{fig:sp}
\end{figure}
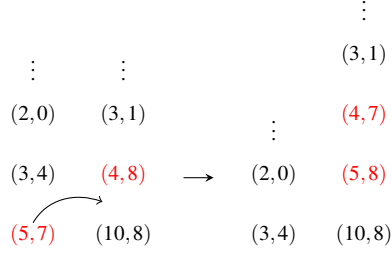

\begin{proof}[Proof of Lemma \ref{lem:effect_jump}]
Define $\overline{\eta}$ by $\overline{\eta}_{z,i}=\left(\eta(z,i)^{*},\sigma_{0}\eta(z,i)^{*}\right)$, where $\eta^*$ is as in (\ref{initial condition}),
and let $\overline{\eta}_{z,i}\left(t\right)$ be the sorting process
starting from the initial condition $\overline{\eta}$. The idea of the proof is that the sorting process marginals $\overline{\eta}^1$ and $\overline{\eta}^2$ are the multi-type TAZRP with initial conditions $\eta^{*}$ and $\sigma_{0}\eta^{*}$ respectively, this allows us to compare the position of the same particle in the two processes. First note
that all pairs in $\overline{\eta}$ are ordered with respect to any
other pair except the pairs in $\overline{\eta}_{0,\cdot}(0)$ and the
pair $\overline{\eta}_{1,0}(0)$. It follows that pairs that are not
in $\overline{\eta}_{0,\cdot}(0)\cup\overline{\eta}_{1,0}(0)$ do not interact
throughout the dynamics. Let
\begin{align}
	A=\left\{ i:\overline{\eta}_{0,i}(0)\text{ interacts with a pair \ensuremath{\left(x,y\right)} s.t. \ensuremath{x=\eta^*(1,0)}}\right\},
\end{align}
and let
\[
i_{fast}=\begin{cases}
-1, &A=\emptyset\\
\sup A, &A\neq\emptyset.
\end{cases}
\]
Note that if $U_{0,i}<U_{1,0}$ then $i_{fast}<i$ as particle $p_{0,i}$ cannot overtake particle $p_{1,0}$ and so $\overline{\eta}_{0,i}(0)$ cannot interact with any pair of the form $(\eta^*_{1,0},y)$ for some $y\in \eta^*_{0,\cdot}$. As $\lim_{i\rightarrow\infty}U_{0,i}=0$
we conclude that $i_{fast}\neq \infty$. On the event that $i_{fast}\geq 0$, the pairs $\left\{ \overline{\eta}_{0,i}\right\} _{i=0}^{i_{fast}}$
will interact with particles whose first coordinate is $p_{1,0}$
according to their order. Once the pair $\overline{\eta}_{0,0}$ has
interacted with $\overline{\eta}_{1,0}$ at some time $t_{0}>0$,
then the two pairs are ordered into two new pairs $\left(p_{0,0},p_{0,0}\right)$
and $\left(p_{1,0},p_{0,1}\right)$. The pair $\left(p_{0,0},p_{0,0}\right)$
is ordered w.r.t. all pairs in $\overline{\eta}\left(t_{0}\right)$ and
therefore will not interact at later times $t>t_0$. The next interaction (if $i_{fast}>0$)
will be between the pairs $\overline{\eta}_{0,1}=\left(p_{0,1},p_{0,2}\right)$
and $\left(p_{1,0},p_{0,1}\right)$ at some time $t_{1}>t_{0}$. The
interaction will lead to the formation of the pairs $\left(p_{0,1},p_{0,1}\right)$
and $\left(p_{1,0},p_{0,2}\right)$ at time $t_{1}$ and we see that
the pair $\left(p_{0,1},p_{0,1}\right)$ is ordered w.r.t. all other
pairs and so will not interact again (see Figure \ref{fig:points2}). We continue in the same way
until all pairs $\left\{ \left(p_{0,i},p_{0,i}\right)\right\} _{i=0}^{i_{fast}}$
have formed by time $t_{i_{fast}}$ as well as the pair $\left(p_{1,0},p_{0,i_{fast}+1}\right)$.
By  the definition of $i_{fast}$, no interactions will occur
at time $t>t_{i_{fast}}$. Now, let  $X_t$ and $X'_t$ be the processes that keep track  of the horizontal position of the different particles in $\eta$ and $\eta'$ respectively i.e.
\begin{align}
	X_{z,i}(t)=n &\iff p_{z,i}\in \eta(n,\cdot)(t)\\
	X'_{z,i}(t)=n &\iff p'_{z,i}\in \eta'(n,\cdot)(t).\nonumber
\end{align}
This implies that for $t>t_{i_{fast}}$
\begin{align}
	X_{z,i}(t)&=X'_{z,i}(t)\quad \text{if} \quad z\notin \{0,1\} \vee (z=0,0\leq i\leq i_{fast}) \\\nonumber
	X_{z,i+1}(t)&=X'_{z,i}(t) \quad \text{if} \quad z=1,i>0\\\nonumber
	X_{0,i}&=X'_{0,i+1}\quad \text{if} \quad i>i_{fast}+1\\
	X_{0,i_{fast}+1}(t)&=X'_{1,0}(t)\nonumber.
\end{align}
Multiplying by $t^{-1}$ and letting $t$ go to infinity we  obtain
\begin{align}\label{rel}
	U_{z,i}&=U'_{z,i}\quad \text{if} \quad z\notin \{0,1\} \vee (z=0,0\leq i\leq i_{fast}) \\\nonumber
U_{z,i+1}&=U'_{z,i} \quad \text{if} \quad z=1,i>0\\\nonumber
U_{0,i}&=U'_{0,i+1}\quad \text{if} \quad i>i_{fast}+1\\
U_{0,i_{fast}+1}&=U'_{1,0}\nonumber.
\end{align}
One can now verify that the relations in (\ref{rel}) between $U$ and $U'$ as arrays indexed by $\mathbb{Z}\times \mathbb{N}_0$ are equivalent to (\ref{eq: the effect of a jump-2}) as configurations in $\mathcal{W}$, and the result is proved (That $i_{fast}+1=i_{sort^{0}}$ is a consequence of Theorem \ref{thm:ov}, but  we do not need it here).
\end{proof}
%%%%% figure for the sorting process
\begin{figure}[ht!]
	
\end{figure}
\begin{figure}[ht!]
	\centering%
		\begin{subfigure}[t]{{.9\textwidth}}
		\centering
		\centering%
		\def\y{2.2}
		\def\z{5}
		\begin{tikzpicture}[scale=0.4, every node/.style={transform shape}]
		\foreach \x in {-1,...,2}
		{
			\node [scale=\y][above] at (\x*\z,-1.5) {$\x$};
		};
		\node [scale=\y][above] at (-\z-2,4) {$\dots$};
		\node [scale=\y][above] at (-\z,0) {$(p_{-1,0},p_{-1,0})$};
		\node [scale=\y][above] at (-\z,2) {$(p_{-1,1},p_{-1,1})$};
		\node [scale=\y][above] at (-\z,4) {$\vdots$};
		\node [scale=\y][above] at (-\z,6) {$(p_{-1,i},p_{-1,i})$};
		\node [scale=\y][above] at (-\z,8) {$\vdots$};
		%next column
		\node [scale=\y][above][red] at (0,0) {$(p_{0,0},p_{0,1})$};
		\node [scale=\y][above][red] at (0,2) {$(p_{0,1},p_{0,2})$};
		\node [scale=\y][above][red] at (0,4) {$\vdots$};
		\node [scale=\y][above][red] at (0,6) {$(p_{0,i},p_{0,i+1})$};
		\node [scale=\y][above][red] at (0,8) {$\vdots$};
		%next column
		\node [scale=\y][above][red] at (\z,0) {$(p_{1,0},p_{0,0})$};
		\node [scale=\y][above] at (\z,2) {$(p_{1,1},p_{1,0})$};
		\node [scale=\y][above] at (\z,4) {$\vdots$};
		\node [scale=\y][above] at (\z,6) {$(p_{1,i},p_{1,i-1})$};
		\node [scale=\y][above] at (\z,8) {$\vdots$};
		%next column
		\node [scale=\y][above] at (\z*2,0) {$(p_{2,0},p_{2,0})$};
		\node [scale=\y][above] at (\z*2,2) {$(p_{2,1},p_{2,1})$};
		\node [scale=\y][above] at (\z*2,4) {$\vdots$};
		\node [scale=\y][above] at (\z*2,6) {$(p_{2,i},p_{2,i})$};
		\node [scale=\y][above] at (\z*2,8) {$\vdots$};
		\node [scale=\y][above] at (\z*2+2,4) {$\dots$};
		\end{tikzpicture}
		\caption{\small The initial configuration $\overline{\eta}$. Only the pairs in red are not ordered. Any other couple in the configuration is ordered with respect to all other pairs.}
	\end{subfigure}

	\begin{subfigure}[t]{.9\textwidth}
		\centering
		\def\y{2.2}
		\def\z{4}
		\begin{tikzpicture}[scale=0.4, every node/.style={transform shape}]
		\foreach \x in {0,...,2}
		{
			\node [scale=\y][above] at (\x*\z,-1.5) {$\x$};
		};
		%next column
		\node [scale=\y][above][red] at (0,0) {$(p_{0,0},p_{0,1})$};
		\node [scale=\y][above][red] at (0,2) {$(p_{0,1},p_{0,2})$};
		\node [scale=\y][above][red] at (0,4) {$\vdots$};
		\node [scale=\y][above][red] at (0,6) {$(p_{0,i},p_{0,i+1})$};
		\node [scale=\y][above][red] at (0,8) {$\vdots$};
		%next column
		\node [scale=\y][above][red] at (\z,0) {$(p_{1,0},p_{0,0})$};
		\node [scale=\y][above] at (\z,2) {$(p_{1,1},p_{1,0})$};
		\node [scale=\y][above] at (\z,4) {$\vdots$};
		\node [scale=\y][above] at (\z,6) {$(p_{1,i},p_{1,i-1})$};
		\node [scale=\y][above] at (\z,8) {$\vdots$};
		%next column
		\node [scale=\y][above] at (\z*2,0) {$(p_{2,0},p_{2,0})$};
		\node [scale=\y][above] at (\z*2,2) {$(p_{2,1},p_{2,1})$};
		\node [scale=\y][above] at (\z*2,4) {$\vdots$};
		\node [scale=\y][above] at (\z*2,6) {$(p_{2,i},p_{2,i})$};
		\node [scale=\y][above] at (\z*2,8) {$\vdots$};
		\draw[->] (0,0.95)  to [out=60,in=160, looseness=1] (\z-1.2,1.6);
		%second step
		\foreach \x in {0,...,2}
		{
			\node [scale=\y][above] at (\x*\z,-1.5) {$\x$};
		};
		\def\w{13}
		\foreach \x in {0,...,2}
		{
			\node [scale=\y][above] at (\x*\z+\w,-1.5) {$\x$};
		};
		%next column
		\node [scale=\y][above][red] at (\w,0) {$(p_{0,0},p_{0,1})$};
		\node [scale=\y][above][red] at (\w,2) {$(p_{0,1},p_{0,2})$};
		\node [scale=\y][above][red] at (\w,4) {$\vdots$};
		\node [scale=\y][above][red] at (\w,6) {$(p_{0,i},p_{0,i+1})$};
		\node [scale=\y][above][red] at (\w,8) {$\vdots$};
		%next column
		\node [scale=\y][above] at (\z+\w,0) {$(p_{0,0},p_{0,0})$};
		\node [scale=\y][above][red] at (\z+\w,2) {$(p_{1,0},p_{0,1})$};
		\node [scale=\y][above] at (\z+\w,4) {$\vdots$};
		\node [scale=\y][above] at (\z+\w,6) {$(p_{1,i},p_{1,i-1})$};
		\node [scale=\y][above] at (\z+\w,8) {$\vdots$};
		%next column
		\node [scale=\y][above] at (\z*2+\w,0) {$(p_{2,0},p_{2,0})$};
		\node [scale=\y][above] at (\z*2+\w,2) {$(p_{2,1},p_{2,1})$};
		\node [scale=\y][above] at (\z*2+\w,4) {$\vdots$};
		\node [scale=\y][above] at (\z*2+\w,6) {$(p_{2,i},p_{2,i})$};
		\node [scale=\y][above] at (\z*2+\w,8.7) {$\vdots$};
		\draw [->,>=stealth] (\z*2+2,4) -- (\z*2+3,4);
		\end{tikzpicture}
		\caption{\small One step in the sorting process starting from $\overline{\eta}$. The pairs $(p_{0,0},p_{0,1})$ and $(p_{1,0},p_{0,0})$ interact and give rise to two new pairs in column $1$ - $(p_{0,0},p_{0,0})$, which is ordered with respect to any other particle in the configuration, and $(p_{1,0},p_{0,1})$ which is unordered with respect to any pair in column $0$.}
	\end{subfigure}%
	\caption{\small The sorting process.}
	\label{fig:points2}
\end{figure}
%where $\overline{\eta}_{z,i}\left(t\right)\left(i\right)$ is the
%$i$'th element of the vector $\overline{\eta}_{z,i}\left(t\right)$.
%We conclude that
%\begin{equation}
%\left(p_{z,i},p_{z',i'}\right)\in\overline{\eta}\left(t\right)\text{ for large enough \ensuremath{t}}\Rightarrow U_{z,i}=U_{z',i'}^{'}.\label{eq: the effect of a jump}
%\end{equation}
%Next note that by \lemref{the_effect_of_a_jump} we have
%\begin{align}
%i & >i_{fast}\Rightarrow U_{0,i}\leq U_{1,0}\label{eq: the effect of a jump-1}\\
%i & \leq i_{fast}\Rightarrow U_{0,i}\geq U_{1,0}.\nonumber
%\end{align}
%As $i_{fast}=i_{sort^{x}}\left(U_{1,0}\right)$, considering (\ref{lem:the_effect_of_a_jump})
%and (\ref{eq: the effect of a jump-1}) we conclude (\ref{eq: the effect of a jump-2}).

%\begin{figure}[t!]
%
%\end{figure}
We are now ready for the proof of Theorem \ref{thm:sd}. We defer the proof of the uniqueness of $\mu^\pi$ to Section \ref{ms}.
\begin{proof}[Proof of Theorem \ref{thm:sd} without uniqueness]
Let $\mathcal{T}_{0}$ be a Poisson process on $\mathbb{Z}\times\mathbb{R}$
with rate $1$, and let $\mathcal{T}_{s}=\mathcal{T}_{0}+0\times\left(0,s\right)$
be the translation of $\mathcal{T}_{0}$ by $s$ units of time. Also
define $\mathcal{T}_{s}^{+}=\mathcal{T}_{s}\cap\mathbb{Z}\times\mathbb{R}_{+}$,
the restriction of $\mathcal{T}_{s}$ to $\mathbb{Z}\times\mathbb{R}_{+}$.
Define $U\left(s\right)$ to be the speed process constructed through
the Harris construction with initial condition $\eta^{*}$ and the Poisson
process $\mathcal{T}_{s}^{+}$. For each $s>0$, $U\left(s\right)$
has distribution $\mu$, and it is enough to show that $U\left(s\right)$
satisfies the TAZRP dynamics. Starting from $U_{0}$, adding an infinitesimal
time $s$ adds, at each site $i$, the operator $\sigma_{i}$ at rate
$1$. According to \lemref{effect_jump} this should result
in applying $\sigma_{i}^{*}$ to $U\left(0\right)$ to obtain $U\left(s\right)$
at rate one. It is straightforward to see that $\pi\sigma_{i}^{*}\eta=\sigma_{-i-1}\pi\eta$
which implies that the process $\pi U\left(s\right)$ is defined through
the generator (\ref{gem}) and the initial condition
$\pi U_{0}=\mu^{\pi}$ which is exactly what we need. To see that
$\mu^{\pi}$ is ergodic, it is enough to note that $\mu^{\pi}$ is
generated by applying some deterministic mapping $\fu$ on the Poisson
process $\mathcal{T}_{0}$, which is ergodic w.r.t. the translation operator $\tau$, and that
$\tau \fu\left(\mathcal{T}_{0}\right)=\fu\left(\tau\mathcal{T}_{0}\right)$.
\end{proof}
%One can relate the TAZRP speed process to the $n$- type TAZRP. The $n$-type TAZRP is the analogue of the TAZRP with a second-class particle to $n$ classes. At each column, one can find as many as $n$ different types of particles of different priority. When the clock rings for the $x$'th column, the particle of the highest priority jumps to the column $x+1$.   Let
%$\mathcal{Z}_{n}=\mathcal{Z}\cap\mathcal{R}_{n}^{\mathbb{Z}\times\mathbb{N}_{0}}$,
%where $\mathcal{R}_{n}=\left\{ -1,..,-n\right\} $. Let $\eta\left(t\right)$
%be the multi-type TAZRP on $\mathcal{Z}$. Then the set $\mathcal{Z}_{n}$
%is closed under the dynamics of $\eta\left(t\right)$ , that is, if
%$\eta\left(0\right)\in\mathcal{Z}_{n}$ then $\eta\left(t\right)\in\mathcal{Z}_{n}$
%for all $t>0$ . We define the $n$- type TAZRP to be the multi-type
%TAZRP restricted on $\mathcal{Z}_{n}$. Note that the choice of $\mathcal{R}_{n}$
%is not crucial, one can take any ordered set of size $n$. However, the advantage of using the set $\mathcal{R}_{n}$ is that one can  read off the class of the particle by removing the minus sign in the number. Note that the TAZRP is the $n$-type TAZRP for $n=2$, as holes(no particles) can always be thought of as the particle of the lowest class.\\
	Let $\fu:\mathbb{R}\rightarrow\mathbb{R}$ be a non-decreasing
function. Let $\eta\in\mathcal{Z}$, we write $\fu\left(\eta\right)$
for the configuration $\fu\left(\eta\right)_{z,i}=\fu\left(\eta_{z,i}\right)$.
Note that $\fu\left(\eta\right)\in\mathcal{Z}$. An easy yet important observation is that the dynamics of the multi-type TAZRP
(and likewise the TASEP) are conserved under a monotone relabelling
of the types.

\begin{lem}\label{lem:preservence_of_monotonicity} Let $\fu:\mathbb{R}\rightarrow\mathbb{R}$
	be a non-decreasing function. Let $\eta\in\mathcal{Z}$, and let $\mathcal{T}$
	be a Poisson process on $\mathbb{Z}\times\mathbb{R}_{+}$ and consider
	$\eta\left(t\right)$ and $\eta_{\fu}\left(t\right)$, the multi-type
	TAZRP defined through the Harris construction with $\mathcal{T}$
	and the initial conditions $\eta$ and $\fu\left(\eta\right)$ respectively.
	Then
	\begin{align}\label{invariance under \fu}
	\fu\left(\eta\left(t\right)\right)=\eta_{\fu}\left(t\right)\qquad\forall t\geq0.
	\end{align}
	
	%In particular, $F\left(\eta\left(t\right)\right)$ is a multi-type
	%TAZRP.
\end{lem}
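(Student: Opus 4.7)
The plan is to reduce the lemma to a single algebraic identity relating $\fu$ and the jump operator $\sigma_x$, namely
\begin{equation*}
\fu(\sigma_x \eta) = \sigma_x \fu(\eta) \qquad \text{for all } x\in\Z,\ \eta\in\mathcal{Z},
\end{equation*}
and then to propagate this through the Harris construction. Once the commutation holds for a single jump operator, the equality $\fu(\eta(t)) = \eta_\fu(t)$ follows by induction on the (locally finite) rings of $\mathcal{T}$ on any finite space-time box, since both processes use the same clocks and $\fu$ and $\sigma_x$ commute at every ring.

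\textbf{Commutation with $\sigma_x$.} Fix $x$, set $\alpha = \eta(x,0)$, and compare $i^* := i_{sort^{x+1}}(\alpha)$ (computed in $\eta$) with $j^* := i_{sort^{x+1}}(\fu(\alpha))$ (computed in $\fu(\eta)$). Because $\fu$ is non-decreasing, $\eta(x+1,i) \geq \alpha$ implies $\fu(\eta(x+1,i)) \geq \fu(\alpha)$, which forces $j^* \geq i^*$. Outside the slab $i^* \leq i < j^*$ both sides of the claimed identity visibly agree: the columns not equal to $x$ or $x+1$ are untouched; at column $x$ both sides return $\eta(x,i+1)$ relabelled by $\fu$; and in column $x+1$ the entries at indices $<i^*$ and $\geq j^*+1$ line up directly. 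The only thing to check is that inside the slab $i^* \leq i < j^*$ the two procedures produce the same values. For each such $i$ we have $\eta(x+1,i) < \alpha$ (by minimality of $i^*$) so $\fu(\eta(x+1,i)) \leq \fu(\alpha)$, and simultaneously $\fu(\eta(x+1,i)) \geq \fu(\alpha)$ (by minimality of $j^*$); hence $\fu(\eta(x+1,i)) = \fu(\alpha)$ for every such $i$. Thus the slab of $\sigma_x \fu(\eta)$ reads $\fu(\alpha),\fu(\alpha),\ldots,\fu(\alpha)$ (ending with a single $\fu(\alpha)$ at position $j^*$), while the slab of $\fu(\sigma_x\eta)$ reads $\fu(\alpha)$ at position $i^*$ followed by $\fu(\eta(x+1,i^*)),\ldots,\fu(\eta(x+1,j^*-1))$, each of which we have just shown equals $\fu(\alpha)$. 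The two arrays therefore coincide.

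\textbf{Passage to the process.} Given the commutation identity, run the Harris construction for $\eta(t)$ and $\eta_\fu(t)$ with the shared clocks $\mathcal{T}$. On any finite spatial window $[-N,N]$ and finite time horizon $[0,T]$ the dynamics depends only on finitely many clock rings (as recalled in the construction of the multi-type TAZRP), and between consecutive rings the configurations are constant. If the ring at time $s$ is at edge $(x,x+1)$, then by the inductive hypothesis $\fu(\eta(s-))=\eta_\fu(s-)$, and the commutation gives
\begin{equation*}
\fu(\eta(s))=\fu(\sigma_x \eta(s-))=\sigma_x \fu(\eta(s-)) = \sigma_x \eta_\fu(s-)=\eta_\fu(s),
\end{equation*}
completing the induction; letting $N,T \to \infty$ yields \eqref{invariance under \fu}.

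\textbf{Main obstacle.} The only real subtlety is the one handled above: when $\fu$ is merely non-decreasing it may identify distinct types, so the insertion index $i_{sort^{x+1}}$ can strictly increase when passing from $\eta$ to $\fu(\eta)$. The verification that the intervening slab is filled with copies of $\fu(\alpha)$ is what rescues the identity, and it is the only place where the non-strictness of monotonicity has to be handled carefully; strict monotonicity would make $i^* = j^*$ and the identity immediate.
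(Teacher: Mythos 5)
Your proof takes exactly the paper's route: reduce the claim to the single-jump commutation identity $\sigma_x\fu(\eta)=\fu(\sigma_x\eta)$ and then propagate it through the shared Harris clocks by induction on the (locally finite) rings. The paper simply asserts that this identity is ``not hard to verify''; your careful treatment of the slab $i^*\leq i<j^*$, where a merely non-decreasing $\fu$ may collapse types and shift the insertion index, correctly supplies the verification.
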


\begin{proof}
	By the definition of $\eta_{\fu}$, (\ref{invariance under \fu}) holds for $t=0$. Now, following the Harris construction, it is enough to show, that
	\begin{align}
	\sigma_{i}\fu\left(\eta\right)=\fu\left(\sigma_{i}\eta\right)\qquad \text{for every $i\in\mathbb{Z}$},
	\end{align}
	which is not hard to verify.
\end{proof}
\begin{cor}
\label{col:SIM}Let $\fu:\mathbb{R}\rightarrow\mathcal{R}_{n}$
be an increasing  function. Then the distribution of the
process $\fu\left(\pi U\left(\cdot\right)\right)$ is a stationary and
ergodic distribution for the $n$- type TAZRP.
\end{cor}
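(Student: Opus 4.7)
The plan is to leverage Theorem \ref{thm:sd} (which says $\mu^\pi$ is an ergodic stationary distribution of the multi-type TAZRP on $\mathcal{Z}$) together with Lemma \ref{lem:preservence_of_monotonicity} (which says monotone relabellings of types commute with the TAZRP dynamics). Since $\fu$ takes values in $\mathcal{R}_n$, the measure $\fu_*\mu^\pi$ is supported on $\mathcal{Z}_n$, so it is a candidate for an invariant measure of the $n$-type TAZRP. The task is then to transfer stationarity and ergodicity from $\mu^\pi$ to $\fu_*\mu^\pi$, both of which should follow essentially for free from the coupling in Lemma \ref{lem:preservence_of_monotonicity} and the fact that $\fu$ acts componentwise (and thus commutes with spatial translations).

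For stationarity, I would construct both processes on a common Harris clock process $\mathcal{T}$: let $\eta(t)$ be the multi-type TAZRP with initial condition $\eta(0)=\pi U \sim \mu^\pi$, and let $\eta_\fu(t)$ be the $n$-type TAZRP with initial condition $\fu(\eta(0))=\fu(\pi U)$. By Lemma \ref{lem:preservence_of_monotonicity}, $\eta_\fu(t)=\fu(\eta(t))$ for all $t\geq 0$. By Theorem \ref{thm:sd}, $\eta(t)\stackrel{d}{=}\eta(0)$ for each $t$, so applying $\fu$ gives $\eta_\fu(t)\stackrel{d}{=}\fu(\eta(0))=\eta_\fu(0)$, which is stationarity of $\fu_*\mu^\pi$ for the $n$-type TAZRP dynamics. (One should also note that $\mathcal{Z}_n$ is closed under the dynamics so $\eta_\fu(t)\in\mathcal{Z}_n$ throughout, and on configurations in $\mathcal{Z}_n$ the multi-type and $n$-type dynamics coincide.)

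For ergodicity, recall from the proof of Theorem \ref{thm:sd} that $\mu^\pi$ arises as the law of a deterministic measurable functional $\fu_0$ of the underlying Poisson process $\mathcal{T}_0$, and that $\tau\fu_0(\mathcal{T}_0)=\fu_0(\tau\mathcal{T}_0)$ for the spatial shift $\tau$. Composing with the componentwise map $\fu$ yields a new translation-equivariant functional $\fu\circ\fu_0$ of the ergodic Poisson process $\mathcal{T}_0$, whose law is precisely $\fu_*\mu^\pi$. Ergodicity is preserved under translation-equivariant factor maps, so $\fu_*\mu^\pi$ is ergodic with respect to the spatial shift, completing the proof.

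There is really no serious obstacle here; the content of the corollary is entirely carried by Lemma \ref{lem:preservence_of_monotonicity} (which provides the intertwining $\fu\circ\Phi_t = \Phi_t\circ\fu$ between $n$-type and multi-type TAZRP dynamics) and the easy fact that componentwise relabellings commute with spatial translations. The only thing to be mildly careful about is checking that the initial configuration $\fu(\pi U)$ actually lies in $\mathcal{Z}_n$ almost surely so that one may apply the $n$-type dynamics, but this is immediate from $\fu(\mathbb{R})\subseteq\mathcal{R}_n$ together with $\pi U\in\mathcal{Z}$ a.s.\ (guaranteed by Lemma \ref{Fpro} and Lemma \ref{lem:Esum=1}).
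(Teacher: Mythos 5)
Your proof is correct and follows essentially the same route as the paper's: stationarity is transferred via the intertwining of Lemma \ref{lem:preservence_of_monotonicity} with a common Harris construction, and ergodicity via the commutation of $\fu$ with the spatial shift applied to the factor-of-a-Poisson-process representation. The extra care you take about $\fu(\pi U)\in\mathcal{Z}_n$ and about the multi-type and $n$-type dynamics agreeing on $\mathcal{Z}_n$ is consistent with (and slightly more explicit than) the paper's one-line treatment.
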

\begin{proof}
Since $\fu$ is increasing, $\fu\left(\pi U\left(t\right)\right)\in\mathcal{Z}_{n}$
for every $t>0$. By \lemref{preservence_of_monotonicity}, the stationarity
of $\pi U\left(\cdot\right)$ implies the stationarity of $\fu\left(\pi U\left(\cdot\right)\right)$.
Moreover, since $\tau \fu\left(\pi U\left(\cdot\right)\right)=\fu\left(\tau\pi U\left(\cdot\right)\right)$
we see that the ergodicity of $U\left(\cdot\right)$ implies that
of $\fu\left(\pi U\left(\cdot\right)\right)$.
\end{proof}
One can use the one-point marginals of the $1$- type TAZRP along
with \colref{SIM} to obtain the one-point
marginal of $U$.
\begin{lem}\label{lem:opm}
Let $U$ be the TAZRP speed process.
Then, for every $j\in\mathbb{N}_{0}$
\begin{equation}\label{eq:one_point_marginal_TAZRP-2}
\mathbb{P}\left(U_{0,j}\leq v\right)=1-\left(1-\sqrt{v}\right)^{j+1} \quad v\in[0,1].
\end{equation}
\end{lem}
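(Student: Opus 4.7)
The plan is to reduce to the (one-type) TAZRP by applying an appropriate threshold function and then invoking Corollary \ref{col:SIM}. Fix $v\in(0,1)$ and define the non-decreasing map $\fu_v:\mathbb{R}\to\mathcal{R}_1=\{-1,-2\}$ by $\fu_v(x)=-1$ for $x>v$ and $\fu_v(x)=-2$ for $x\leq v$. Here $-1$ represents a first-class particle and $-2$ a hole. By Corollary \ref{col:SIM}, the distribution of $\fu_v(\pi U)$ is a translation-invariant ergodic stationary distribution for the $1$-type TAZRP.

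The translation-invariant ergodic stationary distributions of the constant-rate $1$-type TAZRP are exactly the product geometric measures $\{\mu_\rho\}_{\rho\geq 0}$ from \eqref{equilibrium measures for TAZRP}. Hence the single-site (single-column) marginal of $\fu_v(\pi U)$ is geometric: there exists $\alpha=\alpha(v)\in[0,1)$ with $\mathbb{P}(Q=k)=(1-\alpha)\alpha^{k}$ for $k\in\mathbb{N}_0$, where $Q$ is the number of $-1$-labels at column $0$. Because $\pi$ fixes column $0$ and $U\in\mathcal{Z}$ a.s., we have $U_{0,0}\geq U_{0,1}\geq\cdots$, and this column of $\fu_v(\pi U)$ consists of a finite block of $-1$'s at the bottom (of length equal to $N_v:=|\{i:U_{0,i}>v\}|$) followed by infinitely many $-2$'s. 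The finiteness of $N_v$ is guaranteed by Lemma \ref{lem:Esum=1}, which also ensures we are in the correct state space.

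To pin down $\alpha$, note that monotonicity along the column gives
\begin{equation*}
\{N_v=0\}=\{U_{0,0}\leq v\},\qquad \{N_v\geq j+1\}=\{U_{0,j}>v\}.
\end{equation*}
The $j=0$ case of the lemma is already known: it is (in effect) Step~1 of the proof of Lemma \ref{Fpro}, where the coupling with TASEP combined with \eqref{sf} and \eqref{dou} yields $\mathbb{P}(U_{0,0}\leq v)=\sqrt{v}$. Therefore $1-\alpha=\sqrt{v}$, i.e.\ $\alpha=1-\sqrt{v}$, so
\begin{equation*}
\mathbb{P}(U_{0,j}>v)=\mathbb{P}(N_v\geq j+1)=\alpha^{j+1}=(1-\sqrt{v})^{j+1},
\end{equation*}
which rearranges to \eqref{eq:one_point_marginal_TAZRP-2}.

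The only non-routine ingredient is the identification of $\alpha$ from the known law of $U_{0,0}$; the rest is a direct application of Corollary \ref{col:SIM} together with the classification of ergodic stationary distributions for the $1$-type TAZRP. A minor point to verify cleanly is that $N_v$ is a.s.\ finite and coincides with the number of $-1$-labels at the bottom of column $0$, which follows from $U\in\mathcal{Z}$ and Lemma \ref{lem:Esum=1}.
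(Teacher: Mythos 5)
Your proposal is correct and follows essentially the same route as the paper: apply the threshold map $\fu_v$, invoke Corollary \ref{col:SIM} together with the geometric product form of the $1$-type TAZRP stationary measures, identify the geometric parameter from the known law $\mathbb{P}(U_{0,0}\leq v)=\sqrt{v}$, and read off the tail $\mathbb{P}(U_{0,j}>v)=(1-\sqrt{v})^{j+1}$. The only difference is cosmetic (you parametrize the geometric by its success probability $\alpha=1-\sqrt v$ rather than its mean $\rho=(1-\sqrt v)/\sqrt v$, and you work with $\pi U$ rather than $U$, which is immaterial for a single column).
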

\begin{proof}
Let
\begin{equation}\label{eq:\fu_v}
\fu_{v}\left(x\right)=\left\{ \begin{array}{cc}
-1 & x>v\\
-2 & x\leq v
\end{array}\right..
\end{equation}
Note that by \colref{SIM}, $\fu_{v}\left(U\right)$
is a stationary and ergodic measure of the $1$-type TAZRP and that
\begin{align*}
\mathbb{P}\left(U_{0,0}\leq v\right) & =\mathbb{P}\left(\#\left\{ i:\fu\left(U\right)_{0,i}=-1\right\} =0\right)\\
 & =\mathbb{P}_{\mu_{\alpha}}\left(\eta_{0}=0\right)\\
 & =\frac{1}{1+\alpha},
\end{align*}
where in the second equality we used the well-known unique stationary ergodic measures for  the TAZRP mentioned in (\ref{equilibrium measures for TAZRP}). By (\ref{dou}) we see that $\mathbb{P}\left(U_{0,0}\leq v\right)=\sqrt{v}$,
and therefore that
\begin{equation}
\alpha=\frac{1-\sqrt{v}}{\sqrt{v}}.\label{eq:one_point_marginal_TAZRP}
\end{equation}
Similarly, we see that
\begin{align}
\mathbb{P}\left(U_{0,j}\leq v\right) & =\mathbb{P}_{\mu_{\alpha}}\left(\eta_{0}\leq j\right)\label{eq:one_point_marginal_TAZRP-1}\\
 & =1-\left(\frac{\alpha}{1+\alpha}\right)^{j+1}.\nonumber
\end{align}
Plugging (\ref{eq:one_point_marginal_TAZRP}) in \eqref{one_point_marginal_TAZRP-1}
we obtain the result.
\end{proof}
\begin{rem}
	Lemma \ref{lem:opm} implies the result in \cite{G2014}[Theorem 2.1, case $\rho=\infty$]. Indeed, the equality there can be written with our notation and by using the monotonicity of the speeds of particles in one column, as
	\begin{align}
		\lim_{t\rightarrow \infty}\sum_{i=0}^\infty \mathbb{P}(X_{0,i}(t)\geq ut)= \frac{1-\sqrt{u}}{\sqrt{u}},\nonumber
	\end{align}
	which follows easily by using (\ref{eq:one_point_marginal_TAZRP-2}).
\end{rem}

\section{stationary measures for the $n$-type TAZRP}\label{sec:sq}

\subsection{One-column distribution in stationarity}

Our approach to
investigating the $n$-type TAZRP  is through thinking of each column of the
$n$-type TAZRP as a queue.
Such a queue has services at times of a Poisson process
of rate $1$, and its arrival process contains particles
of types from $1$ to $n$.
The server attends to particles according to their class;
when a service occurs, the particle with the
highest priority is served (if any particle is present), and departs from the queue.

Let $\lambda_i$ be the intensity of arrivals of type $i$.
We are interested in the case where the behaviour of the
queue is stationary in time and ergodic, with a finite
average number of particles of each type present in the queue,
and so we need
\begin{align}\label{lc}
\sum_{i=1}^{n}\lambda_{i}<1.
\end{align}
We wish to consider stationary distributions
of the $n$-type TAZRP which are translation-invariant.
In this case the departure process from the queue
(say, the process of particles moving from site $x$ to site
$x+1$) has the same distribution as the arrival process to
the queue (say, the process of particles moving from
site $x-1$ to site $x$).
In this sense we say that the distribution of the arrival
process is a fixed point for the queueing server.
Using a coupling approach analogous to that
used by Mountford and Prabhakar \cite{MP95},
one can show that for any $\lambda_1, \dots, \lambda_n$
satisfying (\ref{lc}),
there is a unique ergodic fixed point with
intensity $\lambda_i$ of arrivals of type $i$
(see \cite{martin2010fixed} for discussion).
We denote this process by $F^{(n)}$, or
$F^{(n)}_{\lambda_1,\dots,\lambda_n}$ when we need
to emphasise the dependence on the arrival intensities.

%
%
%
%More precisely, we consider $F^{(n)}$ to be a marked point process on $\mathbb{R}$ i.e.  $(t,i)\in F^{(n)}$ denotes that particle of type $i\in \{1,...,n\}$ arrived to the queue at time $t$. The
%queue has one server with rate $1$. The server attends to the particles
%according to their class, that is, the particle of the highest class
%in the queue is attended first. In the multi-type TAZRP setting (\ref{gem}), this would amount to setting the value of customer $i$ to be $-i$. Using similar ideas to the ones in \cite{MP95}, one can show that for any $0<\lambda_1,...,0<\lambda_n$ such that (\ref{lc}) holds, there exits a unique $F^{(n)}$ that is ergodic and stationary with respect to the queue's dynamics. By our assumption of stationarity
%we conclude that the process of arrivals to the queue, $F^{\left(n\right)}$,
%has the same distribution as the process of departure.

Let us mention a few immediate properties of the
processes $F^{(n)}$:
\begin{itemize}
\item
By Burke's Theorem, the process $F^{(1)}_{\lambda_1}$ is a
Poisson process of rate $\lambda_1$.
\item
More generally, again by Burke's Theorem, for each $i$, the combined process of all points in $F^{(n)}_{\lambda_1, \dots, \lambda_n}$ of types $1,\dots, i$ is a Poisson process with rate $\sum_{j=1}^{i}\lambda_{i}$.
\item
The process $F^{(n)}_{\lambda_1,\dots,\lambda_n}$ restricted to types $1,\dots, n-1$,
i.e.\ removing the type-$n$ points, gives the process
$F^{(n-1)}_{\lambda_1,\dots,\lambda_{n-1}}$.
\end{itemize}

The following proposition, which is
the starting-point of our analysis of $n$-type equilibrium
distributions,
shows that $F^{\left(n\right)}$ can be obtained by feeding $F^{\left(n-1\right)}$ into a queue with service rate $\sum_{j=1}^{n}\lambda_{i}$.
It was shown as a by-product of the construction of the multi-type
Hammersley process by Ferrari and Martin in \cite{ferrari2009multiclass}, and more directly
using interchangeability properties of queues by Martin and Prabhakar in \cite{martin2010fixed}.
\begin{prop}\label{prop:ntn}
	Consider an exponential server with rate $\sum_{i=1}^{n}\lambda_{i}$,
	and an arrival process with distribution $F^{(n-1)}$.
	Take the departure process and add to it a point of type $n$ whenever
	the queue has an unused service. The resulting output process has
	distribution $F^{\left(n\right)}$.
\end{prop}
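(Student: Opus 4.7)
The plan is to prove the proposition by induction on $n$, establishing that the output process $A^{(n)}$ of the construction coincides in distribution with the unique ergodic fixed point $F^{(n)}$. The base case $n=1$ is immediate: since $F^{(0)}$ is the empty process, every service slot of the rate-$\lambda_1$ server is unused and labelled type $1$, producing a Poisson process of rate $\lambda_1$, which is exactly $F^{(1)}$ by Burke's theorem (as observed in the first bullet above the proposition).

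For the inductive step, I would first verify the intensities of $A^{(n)}$. Writing $\mu_k=\sum_{j=1}^{k}\lambda_j$, the combined arrival rate $\mu_{n-1}$ of types $1,\dots,n-1$ is strictly less than the service rate $\mu_n$, so the queue is stable; its idle-time fraction equals $1-\mu_{n-1}/\mu_n$, so type-$n$ points appear in $A^{(n)}$ at rate exactly $\mu_n\cdot(1-\mu_{n-1}/\mu_n)=\lambda_n$, while the types $1,\dots,n-1$ pass through the single-class server with their original intensities $\lambda_1,\dots,\lambda_{n-1}$ intact. This matches the intensities demanded by $F^{(n)}$.

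Next, I would identify $A^{(n)}$ with $F^{(n)}$ by checking that $A^{(n)}$ is a fixed point of the $n$-type priority queue at rate $1$ and invoking the uniqueness stated above the proposition. Unfolding the inductive hypothesis presents $F^{(n-1)}$ as the output of feeding the empty process through a tandem of single-class exponential servers of rates $\mu_1<\mu_2<\cdots<\mu_{n-1}$, with the unused slots at stage $k$ labelled type $k$ and all earlier labels propagated; appending the rate-$\mu_n$ server then exhibits $A^{(n)}$ as the labelled output of an $n$-server tandem. Following \cite{martin2010fixed}, a labelled variant of the classical interchangeability theorem for tandem M/M/1 queues permits a reorganisation of this tandem that realises $A^{(n)}$ as the stationary departure process of a rate-$1$ $n$-type priority queue whose arrival process is again distributed as $A^{(n)}$, that is, as a fixed point. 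Uniqueness of the ergodic fixed point with prescribed intensities then forces $A^{(n)}\stackrel{d}{=}F^{(n)}$.

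The main obstacle is the labelled-interchangeability step. The classical interchangeability theorem for tandem M/M/1 queues preserves only the marginal distribution of the unlabelled departure process under permutations of the service rates, whereas here each label records the index of the server that skipped a slot, and permuting the servers rearranges the labels nontrivially; moreover stability of each intermediate queue constrains the admissible permutations. Formalising this is exactly the refinement of interchangeability supplied by Martin and Prabhakar. A more pedestrian alternative would be to verify the fixed-point equation for $A^{(n)}$ by direct Burke- and PASTA-type calculations on the joint law of the departure and idle-slot processes, using the inductive identification of the types-$1,\dots,n-1$ sub-process with $F^{(n-1)}$; but that route appears no shorter.
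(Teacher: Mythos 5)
The paper does not actually prove this proposition --- it is quoted from Ferrari--Martin \cite{ferrari2009multiclass} and Martin--Prabhakar \cite{martin2010fixed} --- and your outline correctly reproduces the Martin--Prabhakar strategy (induction on $n$, the intensity bookkeeping via Burke/PASTA, and identification of the output with $F^{(n)}$ through the uniqueness of ergodic fixed points), so you are taking essentially the same route as the paper. The only things to flag are that, like the paper, you defer the one genuinely hard step (the labelled interchangeability of the rate-$\bigl(\sum_{i\le n}\lambda_i\bigr)$ marking queue with the rate-$1$ priority queue) to \cite{martin2010fixed} rather than proving it, and that to invoke uniqueness you should also note that your constructed process is ergodic, which holds because it is a translation-equivariant function of independent Poisson processes.
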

For $0<s\leq 1$ we write $\mathbb{P}^{\left(s\right)}_{\lambda_{1},...,\lambda_{n}}$ for the distribution of the
vector $\left(Q_{1},...,Q_{n}\right)$, where $Q_{i}$ is the number
of particles of type $i$ at some fixed time in the queue with arrival process $F^{\left(n\right)}_{\lambda_{1},...,\lambda_{n}}$
and with an exponential server of rate $s$. Where there is no room for confusion we abbreviate by $\mathbb{P}^{(s)}$.
%We shall also use the distribution $\mathbb{P}^{\left(\lambda_{1},...,\lambda_{i}\right)}$
%of the vector $\left(Q_{1},...,Q_{i-1}\right)$ in a queue fed with
%$F^{\left(i-1\right)}$ served at rate $\sum_{j=1}^{i}\lambda_{j}$.
\begin{rem}\label{rem}
	For a fixed $1 \leq i\leq n$, let $c=\sum_{j=1}^{i}\lambda_{j}$. Note that the distribution of $\mathbb{P}^{\left(c\right)}_{\lambda_{1},...,\lambda_{i}}$
	is equal to that of $\mathbb{P}^{\left(1\right)}_{\nicefrac{\lambda_{1}}{c},...,\nicefrac{\lambda_{n}}{c}}$ restricted on $\left(Q_{1},...,Q_{i-1}\right)$.
\end{rem}
\begin{proof}[Proof of Theorem \ref{thm:mq}]
To prove Theorem \ref{thm:mq}, we
need to show that under $\PP^{(1)}$, the
distribution of $Q_1,\dots, Q_n$ is that given by $(\ref{Qd})$.
%The existence, uniqueness, stationarity and ergodicity of the %measure $\mu^{\lambda_1,...,\lambda_2}$ comes from existence, uniqueness, stationarity and ergodicity of $F^{(n)}$.
The proof of (\ref{Qd}) is by induction on $n$ and as it is a bit technical, we first prove the theorem for the case where $n=2$. We then continue to prove the induction for general $n$.\\

As observed at \remref{in_thm_Q_i}, the result for $n=1$
is a well-known property of $M/M/1$ queues.

Fix $a\geq0$ and $b>0$. Define
an event $A_{\epsilon}$ as follows: the process $F^{\left(2\right)}$
contains $a$ $1$\textquoteright s followed by $b$ $2$\textquoteright s
within the time interval $\left(0,\epsilon\right)$. As $\epsilon$
gets small this event becomes unlikely; we will look at the dominant
contribution to the probability computed in two different ways.

Firstly, by definition of $F^{(2)}$ as a fixed point,
$F^{\left(2\right)}$ is the output process of a rate-$1$ server
with arrival process also distributed as $F^{\left(2\right)}$, and
hence with queue distributed as $\mathbb{P}^{\left(1\right)}_{\lambda_1,\lambda_2}$. If $\epsilon$
is very small, the dominant way to get the event $A_{\epsilon}$ is
not to rely on any arrivals to the queue, but to suppose that the
queue already contains precisely $a$ $1$\textquoteright s and at
least $b$ $2$\textquoteright s at time $0$, and then that we see
$a+b$ services before time $\epsilon$. The probability of this event will
decay as $\epsilon^{a+b}$ and any other way of achieving it decays
quicker. Since the rate of service is $1$, we get
\begin{equation}
\mathbb{P}\left(A_{\epsilon}\right)\sim\mathbb{P}^{\left(1\right)}_{\lambda_{1},\lambda_{2}}\left(Q_{1}=a,Q_{2}\geq b\right)\frac{\epsilon^{\left(a+b\right)}}{\left(a+b\right)!},\label{eq:marginals_multi_type}
\end{equation}
where by $f(\epsilon) \sim g(\epsilon)$ we mean that $\nicefrac{f(\epsilon)}{g(\epsilon)}\rightarrow 1$ as $\epsilon \rightarrow 0$.

Alternatively, by Proposition \ref{prop:ntn}, $F^{\left(2\right)}$ is the output process of rate-$(\lambda_1+\lambda_2)$ server fed by $F^{(1)}$ (which is just a
Poisson process of rate $\lambda_{1}$), with unused services designated
as type-$2$ departures. In terms of such a queue, the dominant way
to get the event $A_{\epsilon}$ as $\epsilon\rightarrow0$ is for
the queue to contain precisely $a$ $1$\textquoteright s at time
$0$, and then to see $a+b$ services before time $\epsilon$. Again
this is better than relying on any new arrivals to the queue. In this case we get
\begin{equation}
\mathbb{P}\left(A_{\epsilon}\right)\sim\mathbb{P}^{\left(\lambda_{1}+\lambda_{2}\right)}_{\lambda_{1}}\left(Q_{1}=a\right)\frac{\left(\epsilon\left(\lambda_{1}+\lambda_{2}\right)\right)^{\left(a+b\right)}}{\left(a+b\right)!}\label{eq:marginals_multi_type-1}.
\end{equation}
Comparing (\ref{eq:marginals_multi_type}) and (\ref{eq:marginals_multi_type-1})
we get
\begin{align*}
\mathbb{P}^{\left(1\right)}\left(Q_{1}=a,Q_{2}\geq b\right) & =\left(\lambda_{1}+\lambda_{2}\right)^{\left(a+b\right)}\mathbb{P}^{\left(\lambda_{1}+\lambda_{2}\right)}_{\lambda_{1}}\left(Q_{1}=a\right)\\
&
=\left(\lambda_{1}+\lambda_{2}\right)^{\left(a+b\right)}\mathbb{P}^{\left(1\right)}_{\nicefrac{\lambda_{1}}{\lambda_{1}+\lambda_{2}},\nicefrac{\lambda_{2}}{\lambda_{1}+\lambda_{2}}}\left(Q_{1}=a\right)\\
&
 =\left(\lambda_{1}+\lambda_{2}\right)^{\left(a+b\right)}\left(1-\frac{\lambda_{1}}{\lambda_{1}+\lambda_{2}}\right)\left(\frac{\lambda_{1}}{\lambda_{1}+\lambda_{2}}\right)^{a}\\
 & =\left(1-\lambda_{1}\right)\lambda_{1}^{a}\frac{\lambda_{2}}{1-\lambda_{1}}\left(\lambda_{1}+\lambda_{2}\right)^{b-1}\\
 & =\mathbb{P}^{\left(1\right)}\left(Q_{1}=a\right)\frac{\lambda_{2}}{1-\lambda_{1}}\left(\lambda_{1}+\lambda_{2}\right)^{b-1},
\end{align*}
where in the second equality we used Remark \ref{rem}. From this it follows quickly that under $\mathbb{P}^{\left(1\right)}_{\lambda_{1},\lambda_{2}}$, $Q_{1}$
and $Q_{2}$ are independent, and $Q_{2}$ has Bernoulli-geometric
distribution with parameters $\nicefrac{\lambda_{2}}{(1\text{\textminus}\lambda_{1})}$
and $\lambda_{1}+\lambda_{2}$ as claimed.

We now turn to the proof for general $n\in \mathbb{Z}$. As an initial part of the
induction step for general $n$, it is useful to state
a lemma relating $\mathbb{P}^{(\lambda_{1}+...+\lambda_{n})}_{\lambda_{1},...,\lambda_{n-1}}$
and $\mathbb{P}^{(1)}_{\lambda_{1},...,\lambda_n}$.
\begin{lem}
Assume the induction hypothesis (\ref{Qd}) for $n-1$. Then for all $a_{1},...,a_{n-1}\in\mathbb{Z}_{+}$,
%\begin{align}
%\left(\lambda_{1}+...+\lambda_{n}\right)^{\sum_{j=1}^{n-1}a_{j}}\mathbb{P}^{\left(\lambda_{1}+...+\lambda_{n}\right)}\left(Q_{1}=a_{1},...,Q_{n-1}=a_{n-1}\right)\label{eq:marginals_multi_type-2}\\
%=\frac{\lambda_{n}}{\lambda_{1}+...+\lambda_{n}}\frac{1}{1-\left(\lambda_{1}+...+\lambda_{n-1}\right)}\nonumber \\
%\times\mathbb{P}^{\left(1\right)}\left(Q_{1}=a_{1},...,Q_{n-1}=a_{n-1}\right).\nonumber
%\end{align}
\begin{multline}
\label{eq:marginals_multi_type-2}
\left(\lambda_{1}+...+\lambda_{n}\right)^{\sum_{j=1}^{n-1}a_{j}}\mathbb{P}^{\left(\lambda_{1}+...+\lambda_{n}\right)}\left(Q_{1}=a_{1},...,Q_{n-1}=a_{n-1}\right)
\\
=\frac{\lambda_{n}}{\lambda_{1}+...+\lambda_{n}}\frac{1}{1-\left(\lambda_{1}+...+\lambda_{n-1}\right)}
\mathbb{P}^{\left(1\right)}\left(Q_{1}=a_{1},...,Q_{n-1}=a_{n-1}\right).
\end{multline}
\end{lem}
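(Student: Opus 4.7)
The plan is to evaluate both sides of the claimed identity directly using the induction hypothesis (\ref{Qd}) for $n-1$; throughout I will write $\Lambda_{j}:=\lambda_{1}+\cdots+\lambda_{j}$ and $c:=\Lambda_{n}$.

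The key conceptual observation is that the marginal $\mathbb{P}^{(1)}_{\lambda_{1},\ldots,\lambda_{n}}(Q_{1}=a_{1},\ldots,Q_{n-1}=a_{n-1})$ coincides with the single-column joint distribution of the $(n-1)$-type TAZRP with parameters $\lambda_{1},\ldots,\lambda_{n-1}$. Indeed, inside the rate-$1$ $n$-type queue the subsystem of particles of types $1,\ldots,n-1$ sees arrivals forming the process $F^{(n-1)}_{\lambda_{1},\ldots,\lambda_{n-1}}$ (by the restriction property of the fixed points listed after Proposition \ref{prop:ntn}) and is served at effective rate $1$ whenever it is non-empty, since the lower-priority type-$n$ particle is served only when no particle of type $\leq n-1$ is present. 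Hence this subsystem evolves as an $(n-1)$-type queue with input $F^{(n-1)}$ and service rate $1$, and the induction hypothesis yields
\begin{equation*}
\mathbb{P}^{(1)}(Q_{1}=a_{1},\ldots,Q_{n-1}=a_{n-1}) = \prod_{i=1}^{n-1} P_{i}(a_{i}),
\end{equation*}
where $P_{i}$ denotes the mass function of $\mathrm{Ber}(\lambda_{i}/(1-\Lambda_{i-1}))\mathrm{Geom}(\Lambda_{i})$.

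For the left-hand side, time-rescaling by a factor $1/c$ (as in Remark \ref{rem}) identifies the rate-$c$ queue with input $F^{(n-1)}_{\lambda_{1},\ldots,\lambda_{n-1}}$ with the rate-$1$ queue having input $F^{(n-1)}_{\lambda_{1}/c,\ldots,\lambda_{n-1}/c}$. Since $\Lambda_{n-1}/c<1$, the induction hypothesis applies to these rescaled parameters and gives
\begin{equation*}
\mathbb{P}^{(c)}(Q_{1}=a_{1},\ldots,Q_{n-1}=a_{n-1}) = \prod_{i=1}^{n-1} \widetilde{P}_{i}(a_{i}),
\end{equation*}
where $\widetilde{P}_{i}$ is the mass function of $\mathrm{Ber}(\lambda_{i}/(c-\Lambda_{i-1}))\mathrm{Geom}(\Lambda_{i}/c)$.

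It remains to check the identity by algebra. A short case check (treating $a_{i}=0$ and $a_{i}\geq 1$ separately and observing that the two cases unify) shows that
\begin{equation*}
\frac{\widetilde{P}_{i}(a_{i})}{P_{i}(a_{i})} = \frac{(c-\Lambda_{i})(1-\Lambda_{i-1})}{(c-\Lambda_{i-1})(1-\Lambda_{i})}\, c^{-a_{i}}
\end{equation*}
for every $a_{i}\geq 0$ and every $i$. Multiplying over $i=1,\ldots,n-1$, the factors $(c-\Lambda_{i})/(c-\Lambda_{i-1})$ and $(1-\Lambda_{i-1})/(1-\Lambda_{i})$ telescope; using $\Lambda_{0}=0$ and $c-\Lambda_{n-1}=\lambda_{n}$, the product simplifies to $\lambda_{n}/\bigl(c(1-\Lambda_{n-1})\bigr)\cdot c^{-\sum a_{i}}$, and rearranging yields the claim. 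The main non-routine step is the priority-based identification of the $(n-1)$-marginal of the $n$-type queue with a single column of the $(n-1)$-type TAZRP, which is what allows the induction hypothesis to be invoked on both sides; once that is in hand, everything else is a telescoping calculation.
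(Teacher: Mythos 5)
Your proof is correct and takes essentially the same route as the paper's: both sides are reduced, via the rescaling $\lambda_i\mapsto\lambda_i/c$ of Remark \ref{rem} and the induction hypothesis, to products of Bernoulli-geometric mass functions, and the identity is then checked coordinate by coordinate (the two cases $a_i=0$ and $a_i\geq 1$) and telescoped --- your ratio $\widetilde P_i(a_i)/P_i(a_i)=\frac{(c-\Lambda_i)(1-\Lambda_{i-1})}{(c-\Lambda_{i-1})(1-\Lambda_i)}c^{-a_i}$ is exactly the paper's identity (\ref{eq:marginals_multi_type-3}) rearranged. The only (welcome) addition is that you spell out the priority argument identifying the $(Q_1,\dots,Q_{n-1})$-marginal of the rate-$1$ $n$-type queue with a single $(n-1)$-type column, a step the paper leaves implicit when invoking the induction hypothesis under $\mathbb{P}^{(1)}$.
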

\begin{proof}
Recall that we can move from $\mathbb{P}^{(1)}$ to $\mathbb{P}^{(\lambda_{1}+\cdot\cdot\cdot+\lambda_{n})}$
by replacing $\lambda_{i}$ by $\nicefrac{\lambda_i}{(\lambda_{1}+\dots+\lambda_{n})}$
for each $i$. By the induction hypothesis, under $\mathbb{P}^{(1)}$
, the $Q_{i}$ , $1\leq i\leq n-1$ are independent with distribution
given by (\ref{Qd}). Hence they are also independent
under $\mathbb{P}^{(\lambda_{1}+\cdot\cdot\cdot+\lambda_{n})}$ .
It will be enough to show that for each $i$, for any $a_{i}\in\mathbb{Z}$,
\begin{equation}
\left(\lambda_{1}+...+\lambda_{n}\right)^{a_{i}}\mathbb{P}^{\left(\lambda_{1}+...+\lambda_{n}\right)}\left(Q_{i}=a_{i}\right)\label{eq:marginals_multi_type-3}
=\frac{\lambda_{i+1}+...+\lambda_{n}}{\lambda_{i}+...+\lambda_{n}}\frac{1-\left(\lambda_{1}+...+\lambda_{i-1}\right)}{1-\left(\lambda_{1}+...+\lambda_{i}\right)}\mathbb{P}^{\left(1\right)}\left(Q_{i}=a_{i}\right).
\end{equation}
Then the claim in (\ref{eq:marginals_multi_type-2}) will follow by
multiplying (\ref{eq:marginals_multi_type-3}) over $i=1,2,...,n-1$
and telescoping the products. The distribution of $Q_{i}$ under $\mathbb{P}^{(1)}$
is given by (\ref{Qd}), so to obtain the distribution
of $Q_{i}$ under $\mathbb{P}^{(\lambda_{1}+\cdot\cdot\cdot+\lambda_{n})}$,
we rescale the parameters as above to get
\[
Q_{i}\sim\text{Ber}\left(\frac{\lambda_{i}}{\lambda_{i}+...+\lambda_{n}}\right)\text{Geom}\left(\frac{\lambda_{1}+...+\lambda_{i}}{\lambda_{1}+...+\lambda_{n}}\right)
\]
Now one can check (\ref{eq:marginals_multi_type-3}) directly for
each value of $a_{i}$. There are essentially two cases, $a_{i}=0$
and $a_{i}>0$ (corresponding to the two forms of the probability
for a Bernoulli-geometric random variable).
\end{proof}
Now we can carry out the induction step. Following what we did in
the case $n=2$, fix $a_{1},...,a_{n-1}\geq0$ and $b\geq1$. Let
$A_{\epsilon}$ be the event that, during the time interval $\left(0,\epsilon\right)$,
the process $F^{(n)}$ contains $a_{1}$ $1$\textquoteright s, then
$a_{2}$ $2$\textquoteright s, and so on up to $a_{n-1}$ points
of type $(n-1)$, and then finally $b$ points of type $n$. Again
we let $\epsilon\rightarrow0$ and look at two ways of approximating
the probability of the event $A_\epsilon$. First we look at $F^{(n)}$ as the output of a queue
of rate $1$ fed by an arrival process whose distribution is $F^{(n)}$. As $\epsilon$ becomes small, the dominant way for the event $A_{\epsilon}$
to occur is that at time $0$ the queue already contains precisely
$a_{i}$ customers of type $i$ for $1\leq i\leq n-1$, and at least
$b$ customers of type $n$, and that then $a_{1}+\cdot\cdot\cdot+a_{n-1}+b$
services occur during the interval $(0,\epsilon)$. This gives
\begin{equation}
\mathbb{P}\left(A_{\epsilon}\right)\sim\mathbb{P}^{\left(1\right)}\left(Q_{1}=a_{1},...,Q_{n-1}=a_{n-1},Q_{n}\geq b\right)\frac{\epsilon^{\left(b+\sum_{j=1}^{n-1}a_{i}\right)}}{\left(b+\sum_{j=1}^{n-1}a_{i}\right)!}.\label{eq:marginals_multi_type-4}
\end{equation}
On the other hand, look at $F^{(n)}$ as the output of a queue of
rate $\lambda_{1}+\cdot\cdot\cdot+\lambda_{n}$, fed by an arrival
process whose distribution is $F^{(n-1)}$, and with points of type
$n$ added at times of unused service. Then the dominant way for $A_{\epsilon}$
to occur for small $\epsilon$ is that at time 0 the queue already
contains precisely $a_{i}$ customers of type $i$ for $1\leq i\leq n-1$,
and then $a_{1}+...+a_{n-1}+b$ services occur during $(0,\epsilon)$.
This leads to
\begin{equation}
\mathbb{P}\left(A_{\epsilon}\right)\sim\mathbb{P}^{\left(\lambda_{1}+...+\lambda_{n}\right)}\left(Q_{1}=a_{1},...,Q_{n-1}=a_{n-1}\right)\frac{\left(\epsilon\left(\lambda_{1}+...+\lambda_{n}\right)\right)^{b+\sum_{j=1}^{n-1}a_{i}}}{\left(b+\sum_{j=1}^{n-1}a_{i}\right)!}.\label{eq:marginals_multi_type-5}
\end{equation}
Comparing (\ref{eq:marginals_multi_type-4}) and (\ref{eq:marginals_multi_type-5})
and continuing using \eqref{marginals_multi_type-2}, we get that
for $b\geq1$,
\begin{align}
&\mathbb{P}^{\left(1\right)}\left(Q_{1}=a_{1},...,Q_{n-1}=a_{n-1},Q_{n}\geq b\right)\nonumber \\
&=\left(\lambda_{1}+...+\lambda_{n}\right)^{\left(b+\sum_{j=1}^{n-1}a_{i}\right)}\mathbb{P}^{\left(\lambda_{1}+...+\lambda_{n}\right)}\left(Q_{1}=a_{1},...,Q_{n-1}=a_{n-1}\right)\nonumber \\
&=\frac{\lambda_{n}}{\lambda_{1}+...+\lambda_{n}}\frac{1}{1-\left(\lambda_{1}+...+\lambda_{n-1}\right)}\mathbb{P}^{\left(1\right)}\left(Q_{1}=a_{1},...,Q_{n-1}=a_{n-1}\right)\left(\lambda_{1}+...+\lambda_{n}\right)^{b}\nonumber \\
&=\mathbb{P}^{\left(1\right)}\left(Q_{1}=a_{1},...,Q_{n-1}=a_{n-1}\right)\frac{\lambda_{n}}{1-\left(\lambda_{1}+...+\lambda_{n-1}\right)}\left(\lambda_{1}+...+\lambda_{n}\right)^{\left(b-1\right)}.\label{eq:marginals_multi_type-6}
\end{align}
From \eqref{marginals_multi_type-6} we see that $Q_{n}$ is independent
of $Q_{1},...,Q_{n-1}$ , and has the Bernoulli- geometric distribution
of (\ref{Qd}) as claimed. This completes the induction
step and the proof .
\end{proof}

\subsection{Two-column distribution in stationarity}\label{sec:TCS}
Using the same ideas as in the preceding proof we can also say something
about two neighbouring queues. Denote by $Q_{i}^{z}$ the number of
particles of type $i$ in the column $z$ in equilibrium and define $\mathbb{P}^{2,\left(1\right)}$ as the joint probability of two
queues $\mathbb{Q}^{0},\mathbb{Q}^{1}$(the joint distribution of
$\left(Q_{1}^{0},...,Q_{n}^{0},Q_{1}^{1},...,Q_{n}^{1}\right)$) ,
where the departure of $\mathbb{Q}^{0}$ is the arrival process of
$\mathbb{Q}^{1}$ and where the server process of both queues is of
rate $1$.
\begin{lem}
\label{lem:two_column_distribution}Let $\mathbb{P}^{2,\left(1\right)}$
be the distribution of two queues in tandem at stationarity, then
\[
\mathbb{P}^{2,\left(1\right)}\left(Q_{1}^{0}\geq1,Q_{1}^{1}=a,Q_{2}^{1}\geq b\right)=\lambda_{2}\lambda_{1}^{a+1}\left(\lambda_{1}+\lambda_{2}\right)^{b}.
\]
\end{lem}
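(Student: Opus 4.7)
The plan is to replicate the small-$\epsilon$ strategy from the proof of Theorem~\ref{thm:mq} (the $n=2$ case in particular), now applied to a carefully chosen event that probes two adjacent columns simultaneously. Specifically, I would define $A_\epsilon$ to be the event that during the interval $(0,\epsilon)$: (a) $\mathbb{Q}^0$ has exactly one service, producing a type-1 departure; (b) $\mathbb{Q}^1$ has exactly $a+b$ services, producing $a$ type-1 departures followed by $b$ type-2 departures; and (c) the $\mathbb{Q}^0$ service occurs after all the $\mathbb{Q}^1$ services. Condition (c) is imposed so that the type-1 arriving at $\mathbb{Q}^1$ from $\mathbb{Q}^0$ cannot be served during $(0,\epsilon)$, and so cannot disrupt the observed $\mathbb{Q}^1$ output pattern.

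For Method 1, viewing both queues in their direct representation as rate-$1$ priority queues, the dominant contribution to $\mathbb{P}(A_\epsilon)$ as $\epsilon\to 0$ forces the initial state $Q_1^0\geq 1$, $Q_1^1=a$, $Q_2^1\geq b$, together with exactly the prescribed clock pattern; standard Poisson-ordering bookkeeping contributes a factor $\epsilon^{a+b+1}/(a+b+1)!$, yielding
\[
\mathbb{P}(A_\epsilon)\;\sim\;\mathbb{P}^{2,(1)}\bigl(Q_1^0\geq 1,\,Q_1^1=a,\,Q_2^1\geq b\bigr)\cdot\frac{\epsilon^{a+b+1}}{(a+b+1)!}.
\]

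For Method 2, I would apply Proposition~\ref{prop:ntn} to re-realize the $F^{(2)}$ output of $\mathbb{Q}^0$ as the output of an auxiliary rate-$(\lambda_1+\lambda_2)$ queue $\tilde{\mathbb{Q}}$ fed by a Poisson process of rate $\lambda_1$, with type-2 departures appearing as unused services. In this alternative, condition (a) becomes: $\tilde{\mathbb{Q}}$ has one service in $(0,\epsilon)$ which is used, dominantly requiring $\tilde{Q}(0)\geq 1$. Combining the geometric one-point marginal for $\tilde{Q}(0)$, the single-column joint marginal of $(Q_1^1,Q_2^1)$ coming from Theorem~\ref{thm:mq}, and the modified clock factor (with rate $\lambda_1+\lambda_2$ for $\tilde{\mathbb{Q}}$ and rate $1$ for $\mathbb{Q}^1$) gives a second asymptotic expression for $\mathbb{P}(A_\epsilon)$ in terms of $\lambda_1,\lambda_2,a,b$. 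Equating the leading-order $\epsilon^{a+b+1}$ terms and cancelling the common factor produces the claimed identity.

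The main obstacle, which makes this step more delicate than its one-column analogue in Theorem~\ref{thm:mq}, is the joint analysis in Method 2: the state of $\tilde{\mathbb{Q}}$ is coupled to $(Q_1^1,Q_2^1)$ through the \emph{labeled} arrival stream to $\mathbb{Q}^1$, and while Burke's theorem applied to the type-1 M/M/1 subsystem yields the clean independence structure needed for the type-1 coordinate, the type-2 interactions require careful bookkeeping and are ultimately responsible for the factor $(\lambda_1+\lambda_2)^b$ in the answer (distinguishing it from a naive product of single-column marginals).
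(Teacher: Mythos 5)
Your overall strategy --- probe the two-column state with a rare event on $(0,\epsilon)$, compute its probability two ways, and equate leading orders --- is the same as the paper's, and your Method 1 is fine (with the caveat below). But your Method 2 has a genuine gap, and it sits exactly at the point that makes this lemma nontrivial. You re-represent only $\mathbb{Q}^0$ via Proposition \ref{prop:ntn}, keeping $\mathbb{Q}^1$ as a rate-$1$ two-type queue. The dominant contribution to $\mathbb{P}(A_\epsilon)$ in that representation is then proportional to $\mathbb{P}\bigl(\tilde{Q}(0)\geq 1,\,Q_1^1=a,\,Q_2^1\geq b\bigr)$, which is still an unknown \emph{joint} two-column quantity; you have merely traded one such quantity for another. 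To close the argument by ``combining the geometric marginal for $\tilde{Q}(0)$ with the single-column marginal of $(Q_1^1,Q_2^1)$'' you would need $\tilde{Q}(0)$ to be independent of $(Q_1^1,Q_2^1)$, and that independence is essentially equivalent to the factorization the lemma is asserting. You flag this as ``careful bookkeeping'' but do not supply the argument, and your attribution of the factor $(\lambda_1+\lambda_2)^{b}$ to ``type-2 interactions'' does not correspond to any step of a completed proof (in a correct completion that factor would simply come from the known marginal $\mathbb{P}^{(1)}(Q_2^1\geq b)$). The paper avoids this trap by re-representing \emph{both} queues as rate-$(\lambda_1+\lambda_2)$ servers fed in tandem by $F^{(1)}$, with type-$2$ departures read off as unused services of $\mathbb{Q}^1$. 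Under that representation the event depends only on the first-class contents $(Q_1^0,Q_1^1)$ --- the condition $Q_2^1\geq b$ disappears because unused services are always available --- and the joint law of first-class contents across columns is known to be a product of geometrics (classical Burke/product-form for $M/M/1$ queues in tandem). That is the step your proposal is missing. (Your route could in principle be rescued by invoking the extended output theorem of \cite{martin2010fixed}, that the $F^{(2)}$ output of $\tilde{\mathbb{Q}}$ on $(-\infty,0]$ is independent of $\tilde{Q}(0)$, but you neither state nor prove this.)

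A secondary issue: for the two-way computation to be legitimate, the event $A_\epsilon$ must be a function of the observable departure processes, so that it is literally the same event under both representations. As written, your conditions (a)--(c) count \emph{services} of $\mathbb{Q}^0$ and $\mathbb{Q}^1$, and the service clocks are representation-dependent (rate $1$ versus rate $\lambda_1+\lambda_2$). This is why the paper defines its event purely through the departure sequence of $\mathbb{Q}^1$ (namely $a$ ones, then $b$ twos, then one more one, the trailing one being what forces $Q_1^0\geq 1$). You should restate your event in terms of departures only; this is fixable and does not change your leading-order exponent $\epsilon^{a+b+1}$, but combined with the independence gap above it means the proposal as it stands does not yield the identity.
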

\begin{proof}
We think of the process $F^{\left(2\right)}$ in two ways. First we
think of it as the departure process of the queue $\mathbb{Q}^{1}$
fed by $F^{\left(2\right)}$ and served at rate $1$. Let $N^1$ and $N^0$ two independent Poisson processes of rate $1$ that are independent of $\mathbb{Q}^0$ and $\mathbb{Q}^1$. Let $A$ be the event where one
sees in the departure process the sequence that begins with $a$ 1's then $b$ 2's and then
one $1$ in the time interval $[0,\epsilon)$. The probability of $A$ is dominated by
\begin{align}\label{eq:two_column_distribution}
&\mathbb{P}^{2,\left(1\right)}\left(Q_{1}^{0}\geq1,Q_{1}^{1}=a,Q_{2}^{1}\geq b\right)\mathbb{P}(\text{In the interval $[0,\epsilon]$, $N^1$ has exactly $a+b$ epochs}\\ &\text{   before $N^0$ has its first epoch  after which there is another epoch of $N^1$}).\nonumber
\end{align}
To see that, note that we need to have at least one first class particle in $\mathbb{Q}^{0}$,
$a$ first class particles in $\mathbb{Q}^{1}$ and at least $b$
second-class particles in $\mathbb{Q}^{1}$, then, in the time interval
$[0,\epsilon)$ the following must happen in order:
\begin{enumerate}
	\item $\left(a+b\right)$ customers must be served in $\mathbb{Q}^{1}$ before any customer is served in $\mathbb{Q}^{0}$;
	\item one service in $\mathbb{Q}^{0}$;
	\item one service in $\mathbb{Q}^{1}$.
\end{enumerate}
Recall that if $X$ is the sum of $n$ i.i.d. exponential r.v's of rate $\lambda$ then $X\sim \Gamma(n,\lambda)$, i.e.
\begin{align}
	\mathbb{P}(X\in dx)=f^{n,\lambda}(x)dx=\frac{\lambda^n x^{n-1}}{(n-1)!}e^{-\lambda x}dx.
\end{align}
 We have
\begin{align}\label{tcd1}
\mathbb{P}(A)&\sim\mathbb{P}^{2,\left(1\right)}\left(Q_{1}^{0}\geq 1,Q_{1}^{1}=a,Q_{2}^{1}\geq b\right)\int_{0}^{\epsilon}f^{a+b,1}(r_1)\int_{r_1}^{\epsilon}f^{1,1}(r_2)(1-e^{-(\epsilon-r_2)})dr_2dr_1 \\\nonumber
&\sim\mathbb{P}^{2,\left(1\right)}\left(Q_{1}^{0}\geq 1,Q_{1}^{1}=a,Q_{2}^{1}\geq b\right)\int_{0}^{\epsilon}\Big[\frac{r_1^{a+b-1}}{(a+b-1)!}e^{-r_1}\Big]\int_{r_1}^{\epsilon}e^{-(r_2-r_1)}(1-e^{-(\epsilon-r_2)})dr_2dr_1\\
&\sim\mathbb{P}^{2,\left(1\right)}\left(Q_{1}^{0}\geq 1,Q_{1}^{1}=a,Q_{2}^{1}\geq b\right)\int_{0}^{\epsilon}\Big[\frac{r_1^{a+b-1}}{(a+b-1)!}\Big]\int_{r_1}^{\epsilon}e^{-r_2}(1-e^{-(\epsilon-r_2)})dr_2dr_1\nonumber\\
&\sim\mathbb{P}^{2,\left(1\right)}\left(Q_{1}^{0}\geq1,Q_{1}^{1}=a,Q_{2}^{1}\geq b\right)\int_{0}^{\epsilon}\frac{r_1^{a+b-1}}{(a+b-1)!}(\epsilon-r_1)^2dr_1.\nonumber
\end{align}
Using integration by parts twice
\begin{align}\label{ip}
	\int_{0}^{\epsilon}\frac{r_1^{a+b-1}}{(a+b-1)!}(\epsilon-r_1)^2dr_1&=2\int_{0}^{\epsilon}\frac{r_1^{a+b+1}}{(a+b+1)!}dr_1\\
	&=\frac{2\epsilon^{a+b+2}}{(a+b+2)!}\nonumber.
\end{align}
Plugging (\ref{ip}) into (\ref{tcd1})
\begin{align}
	\mathbb{P}(A)\sim\mathbb{P}^{2,\left(1\right)}\left(Q_{1}^{0}\geq1,Q_{1}^{1}=a,Q_{2}^{1}\geq b\right)\frac{2\epsilon^{a+b+2}}{(a+b+2)!}.
\end{align}
On the other hand, we can think of the two queues under $\mathbb{P}^{2,\left(\lambda_{1}+\lambda_{2}\right)}$,
that is, having $F^{\left(1\right)}$ as their arrival
process and served at rate $\lambda_{1}+\lambda_{2}$. We can obtain
$F^{\left(2\right)}$ by interpreting an unserved epoch in $\mathbb{Q}^{1}$
as a second-class particle. We thus have
\begin{align}\label{eq:two_column_distribution-2}
&\mathbb{P}(A)\sim\mathbb{P}^{2,\left(\lambda_{1}+\lambda_{2}\right)}\left(Q_{1}^{0}\geq1,Q_{1}^{1}=a\right)\mathbb{P}\Big(\text{$N^1$ has exactly $a+b$ epochs}\\ &\text{in the interval $[0,\epsilon]$ before $N^0$ has its first epoch in the interval $[0,\epsilon]$ after which $N^1$ has an epoch}\Big)\nonumber\\\nonumber
&\sim\mathbb{P}^{2,\left(\lambda_{1}+\lambda_{2}\right)}\left(Q_{1}^{0}\geq1,Q_{1}^{1}=a\right)\int_{0}^{\epsilon}f^{a+b,\lambda_1+\lambda_2}(r_1)\int_{r_1}^{\epsilon}f^{1,\lambda_1+\lambda_2}(r_2)\Big(1-e^{-(\lambda_1+\lambda_2)(\epsilon-r_2)}\Big)dr_1dr_2\\\nonumber
&\sim\mathbb{P}^{2,\left(\lambda_{1}+\lambda_{2}\right)}\left(Q_{1}^{0}\geq1,Q_{1}^{1}=a\right)\int_{0}^{\epsilon}\Big[\frac{(\lambda_1+\lambda_2)^{a+b}r_1^{a+b-1}}{(a+b-1)!}\Big]\int_{r_1}^{\epsilon}e^{-(\lambda_1+\lambda_2)r_2}(1-e^{-(\lambda_1+\lambda_2)(\epsilon-r_2)})dr_2dr_1\\
&\sim\mathbb{P}^{2,\left(\lambda_{1}+\lambda_{2}\right)}\left(Q_{1}^{0}\geq1,Q_{1}^{1}=a\right)(\lambda_1+\lambda_2)^{a+b+1}\int_{0}^{\epsilon}\frac{r_1^{a+b-1}}{(a+b-1)!}(\epsilon-r_1)^2dr_1\nonumber\\
&\sim\mathbb{P}^{2,\left(\lambda_{1}+\lambda_{2}\right)}\left(Q_{1}^{0}\geq1,Q_{1}^{1}=a\right)(\lambda_1+\lambda_2)^{a+b+1}\frac{2\epsilon^{a+b+2}}{(a+b+2)!}\nonumber.\nonumber
\end{align}
Comparing (\ref{tcd1}) and (\ref{eq:two_column_distribution-2}) and letting $\epsilon$ go to zero,
we conclude that
\begin{align*}
\mathbb{P}^{2,\left(1\right)}\left(Q_{1}^{0}\geq1,Q_{1}^{1}=a,Q_{2}^{1}\geq b\right) & =\mathbb{P}^{2,\left(\lambda_{1}+\lambda_{2}\right)}\left(Q_{1}^{0}\geq1,Q_{1}^{1}=a\right)\left(\lambda_{1}+\lambda_{2}\right)^{a+b+1}\\
 & =\mathbb{P}^{2,\left(\lambda_{1}+\lambda_{2}\right)}\left(Q_{1}^{0}\geq1\right)\mathbb{P}^{2,\left(\lambda_{1}+\lambda_{2}\right)}\left(Q_{1}^{1}=a\right)\left(\lambda_{1}+\lambda_{2}\right)^{a+b+1}\\
 & =\frac{\lambda_{1}}{\lambda_{1}+\lambda_{2}}\left(1-\frac{\lambda_{1}}{\lambda_{1}+\lambda_{2}}\right)\left(\frac{\lambda_{1}}{\lambda_{1}+\lambda_{2}}\right)^{a}\left(\lambda_{1}+\lambda_{2}\right)^{a+b+1}\\
 & =\lambda_{2}\lambda_{1}^{a+1}\left(\lambda_{1}+\lambda_{2}\right)^{b-1},
\end{align*}
where in the second equality we used the independence of the number of first class particles across different columns and in the third equality the fact that the distribution of $Q^i_1$ is geometric and Remark \ref{rem}.
\end{proof}
%\begin{thm}
%Let $\left\{ \mathbb{Q}_{i}\right\} _{i\in\mathbb{Z}}$ be a sequence
%of queues in tandem with arrival process $F^{\left(2\right)}$. Then
%the r.vs $\left\{ Q_{0}^{1},Q_{0}^{2},Q_{-1}^{1},Q_{-2}^{1}...,\right\} $
%are independent of $\left\{ Q_{1}^{1},Q_{1}^{2},Q_{2}^{1},Q_{2}^{2},...\right\} $
%\end{thm}

\section{marginals of the TAZRP speed process}\label{ms}

In this section we apply the results in Section \ref{sec:sq} to obtain more refined results on the speed process. We divide the results into two subsections. The first deals with the distribution of one column of the speed process, whereas the second deals with the distribution of two columns.

\subsection{Distribution of the speeds at a single column}

One may think of a column in the multi-type TAZRP in stationarity
as a queue with a countable number of classes. For example, the column
of the speed process, $U_{0}$, can be thought of as a marked point
process $\mathcal{P}$ on $[0,1]$. Each realization of $U_{0}$ is
a countable set of  numbers in $[0,1]$, where we label each number
in that set (the speed $U_{z,i}$ for some $i$ which is to be thought
of as the class of a particle) with a number in $\mathbb{N}$ that
denotes the number of particles in that class. For example, if $U_{0,i-1}>U_{0,i}=U_{0,i+1}=\nicefrac{1}{2}>U_{0,i+2}$
, then $\left(\frac{1}{2},2\right)\in\mathcal{P}$ , that is, there
are two particles of class $\nicefrac{1}{2}$ in the column. In what follows we would like to show that  the TAZRP speed process can be viewed as the continuum version of the stationary measure discussed in Subsection \ref{sec:TCS}. In fact, we prove this by approximating $U_{0}$ by $n$-type TAZRP for large $n$. \\
For each $n\in\mathbb{N}$, fix $1>x_{1}>...>x_{n+1}=0$,
and define the function $\fu_{\bf{x}}^{n}:[0,1]\rightarrow\mathcal{R}_{n}$
by
\begin{equation}\label{eq:increasing function}
\fu_{\bf{x}}^{n}\left(x\right)=-\min\left\{ i:x\geq x_{i}\right\} ,
\end{equation}
where $\textbf{x}=\left(x_{1},...,x_{n}\right)$. By Corollary \ref{col:SIM} applying the map $\fu_{\textbf{x}}^{n}$ on each element $\eta(z,i)$
of $\eta\in\mathcal{Z}$ gives an element of $\mathcal{Z}_{n}$
so that $\fu_{\textbf{x}}^{n}\left(\pi U\right)$ is a stationary
and ergodic distribution for the $n$-type TAZRP.

\begin{proof}[Proof of Theorem \ref{Poisson pic}]
	Let $\fu_{\textbf{x}}^{n}:[0,1]\rightarrow\mathcal{R}_{n}$ be the
	function defined in (\ref{eq:increasing function}) associated with
	 $x_{i}=1-in^{-1}$, for $1\leq i\leq n+1$. Applying $\fu_{\textbf{x}}^{n}$ on $\pi U_{0}$
%	$\pi U_{0}$ (as we are only interested in the distribution of one column, we leave out $\pi$),
we obtain an ergodic and stationary measure for the $n$-type TAZRP. By the uniqueness of the stationary and ergodic measures of the $n$-type TAZRP, we see that the queue $Q^n=(Q^n_1,...,Q^n_n)$ has a Bernoulli-geometric product distribution as in Theorem \ref{thm:mq}. The arrival rates to the queue $Q^n$ are given by
	\begin{align}
		\lambda_{i}= \sqrt{1-(i-1)n^{-1}}-\sqrt{1-in^{-1}}=\sqrt{x_{i-1}}-\sqrt{x_i}, \quad \text{for}\quad 1\leq i \leq n.
	\end{align}
	To see that, note that by stationarity of $\fu^{(n)}$, the arrival rate of customers of type $i$ to the queue equals the rate of departure of customers of type $i$ under $\mathbb{P}^{(1)}$. By the stationarity of $\mathbb{P}^{(1)}$, the rate of departure of particles of type $i$ equals the probability that the $i$'th customer is the first in the queue, that is, particle of type $i$ is next to be served in the queue.
	\begin{align}
		\lambda_i&=\mathbb{P}^{(1)}(Q_1=0,...,Q_{i-1}=0,Q_i>0)\\
		&=\mathbb{P}(U_{0,0}\in(x_{i-1},x_i])=1-\sqrt{x_{i}}-(1-\sqrt{x_{i-1}}),\nonumber
	\end{align}
	where in the third equality we used the marginal distribution of $U_{0,0}$ in (\ref{dou}). By Theorem \ref{thm:mq} we see that $Q^n_1,...,Q^n_n$ are independent and that
	\begin{align}\label{cq}
		&Q^n_{i}\sim\text{Ber}\left(\frac{\sqrt{x_{i-1}}-\sqrt{x_i}}{\sqrt{x_{i-1}}}\right)\text{Geom}\left(1-\sqrt{x_i}\right)\\
		&=\text{Ber}\left(\frac{\sqrt{1-(i-1)n^{-1}}-\sqrt{1-in^{-1}}}{\sqrt{1-(i-1)n^{-1}}}\right)\text{Geom}\left(1-\sqrt{1-in^{-1}}\right)\nonumber.
	\end{align}
	Fix $\epsilon>0$ and let $i\in [1,n-\lfloor \epsilon n\rfloor]$. In what follows we use $C$ to denote  a constant that may depend on some variables and that changes form line to line. Note that by Taylor expansion around $1-(i-1)n^{-1}$ there exists $C(i,\epsilon)>0$ such that
	\begin{align}\label{te}
		&\frac{\sqrt{1-(i-1)n^{-1}}-\sqrt{1-in^{-1}}}{\sqrt{1-(i-1)n^{-1}}}=\frac{(2\sqrt{1-(i-1)n^{-1}})^{-1}n^{-1}}{\sqrt{1-(i-1)n^{-1}}}+Cn^{-2}\\
		&=\frac{n^{-1}}{2(1-(i-1)n^{-1})}+Cn^{-2}=\frac{n^{-1}}{2x_{i-1}}+Cn^{-2},\nonumber
	\end{align}
	 where  for a fixed $\epsilon$ and every $n$, $C(\cdot,\epsilon)$ is bounded uniformly on $i\in [1,n-\lfloor \epsilon n\rfloor]$. Fix $y\in[\epsilon,1)$, and let $i_n=\lceil n y\rceil$. Then, $y\in [x_{n-i_n},x_{n-i_n-1})$, and if $y\in U_0$ then $Q^n_{n-i_n}\neq0$. Plugging  (\ref{te}) in (\ref{cq}), there exists a $C(i,\epsilon)$ such that
	\begin{align}\label{bg}
			Q^n_{n-i_n} \sim \text{Ber}\left(\frac{n^{-1}}{2x_{n-i_n-1}}+Cn^{-2}\right)\text{Geom}\left(1-\sqrt{x_{n-i_n-1}}-Cn^{-1}\right).
	\end{align}
	As $|y-x_{n-i_n-1}|\leq n^{-1}$, plugging $y$ into (\ref{bg}), there exists $C(y,\epsilon)>0$ where $C(\cdot,\epsilon)$ is bounded on $[\epsilon,1)$,  such that
	\begin{align}\label{q est}
		Q^n_{n-i_n} \sim \text{Ber}\left(\frac{n^{-1}}{2y}+Cn^{-2}\right)\text{Geom}\left(1-\sqrt{y}+Cn^{-1}\right).
	\end{align}
	 Now let $\mathcal{P}^n=\{(p_i,l_i)\}_{i=1}^n$ be the marked point process associated with the queue $Q^n$ by
	 \begin{align}
	 	p_i&=x_{n-i} \quad 1 \leq i\leq n\quad \text{(points)}\\
	 	l_i&=Q^n_{n-i} \quad 1 \leq i\leq n\quad \text{(marks)}.\nonumber
	 \end{align}
	 Let $\mathcal{P}^{n,1}=\bigcup_{l_i\neq 0}p_i$. If $t_1,t_2,t_3\in (\epsilon,1]$ such that $t_1<t_2<t_3$, then by Theorem \ref{thm:mq}
	 \begin{align}
	 	\#\{\mathcal{P}^{n,1}\cap [t_1,t_2)\}, \#\{\mathcal{P}^{n,1}\cap (t_2,t_3]\}
	 \end{align}
	 are independent. By (\ref{q est}) we see that
	 \begin{align}
	 	\delta^{-1}\lim_{\delta\rightarrow 0}\lim_{n\rightarrow\infty}\mathbb{P}(\mathcal{P}^{n,1}\cap [y,y+\delta)\neq \emptyset)=\frac{1}{2y},
	 \end{align}
	 which implies that $\mathcal{P}^{n,1}$ converges to an inhomogeneous Poisson process with intensity $\frac{1}{2x}$ on $[\epsilon,1)$. Next, again by (\ref{q est}), we see that conditioned on the event
	 \begin{align}
	 	\mathcal{P}^{n,1}\cap [y,y+\delta]\neq \emptyset,
	 \end{align}
	 if $p_i\in \mathcal{P}^{n,1}\cap [y,y+\delta)$, $l_i\sim \text{Geom}(r(\delta,n))$ and
	 \begin{align}
	 	\lim_{\delta\rightarrow 0}\lim_{n\rightarrow \infty}r(\delta,n)=1-\sqrt{y},
	 \end{align}
	 which implies the result on $[\epsilon,1)$. Taking $\epsilon\rightarrow 0$ concludes the proof.
\end{proof}
%\begin{cor}
%For a fixed $j>0$, the sequence of speeds $\left\{ U_{0,i}\right\} _{i=j+1}^{\infty}$
%conditioned on $U_{0,j}$ is independent of $\left\{ U_{0,i}\right\} _{i=0}^{j-1}$.
%\end{cor}
%\begin{proof}
%Condition on the event $A=\{U_{0,j}=v\}$, where $v\in[0,1]$. By Theorem \ref{Poisson pic} we know that $\cup_i U_{0,i}$ can be seen as a realization of a Poisson point process. This implies that condition on $A$
%\begin{align}
%	\bigcup_{l=0}^{j-1}U_{0,i}\text{ is independent of } \bigcup_{l=j+1}^{\infty}U_{0,i}.
%\end{align}
%This shows the independence of the \textbf{sets} of speeds below and above the speed $v$. That the labels of the speeds above $v$ is also independent of the labels of the speeds below $v$ is a consequence of the memoryless property of the geometric distribution.
%\end{proof}

\begin{proof}[Proof of Theorem \ref{thm:tdm}]
Fix $0=x_3<x_{2}<x_{1}<1$ and consider the map given in (\ref{eq:increasing function})
associated with $x_{1},x_{2}$. First note that $U_0$ and $\pi U_0$ have the same distribution and therefore, so do $\fu(\pi U)$ and $\fu(U)$ a fact we use in the computations below. Since $\fu\left(\pi U\right)$
is stationary w.r.t. the $2$- type TAZRP with some $\lambda_{1}$
and $\lambda_{2}$, we can relate $x_{1},x_{2}$ to $\lambda_{1},\lambda_{2}$. By the definition of the  projection $\fu(\pi U)$
\begin{align*}
\mathbb{P}\left(\left(\pi U\right)_{0,0}<x_{2}\right) & =\mathbb{P}\left( U_{0,0}<x_{2}\right)=\mathbb{P}^{\left(1\right)}\left(Q_{1}=0,Q_{2}=0\right)\\
\mathbb{P}\left(\left(\pi U\right)_{0,0}<x_{1}\right) & =\mathbb{P}\left( U_{0,0}<x_{1}\right)=\mathbb{P}^{\left(1\right)}\left(Q_{1}=0\right).
\end{align*}
Using Lemma \ref{lem:opm} and Theorem \ref{thm:mq}
we see that
\begin{align}\label{eq3}
\lambda_{1} & =1-\sqrt{x_{1}}\nonumber \\
\lambda_{2} & =\sqrt{x_{1}}-\sqrt{x_{2}},
\end{align}
and
\begin{align}\label{ec}
&\mathbb{P}^{(1)}(Q_1=k)=(1-\lambda_{1})\lambda_{1}^k\\
	&\mathbb{P}^{(1)}(Q_2=k)=\begin{cases}
	1-\frac{\lambda_{2}}{1-\lambda_{1}} & k=0\\\nonumber
	\frac{\lambda_{2}}{1-\lambda_{1}}(1-(\lambda_{1}+\lambda_{2}))(\lambda_{1}+\lambda_{2})^{k-1} & k>0.
	\end{cases}	
\end{align}
For $i<j$
\begin{align}\label{eq1}
\mathbb{P}\left(x_{1}>  U_{0,i} , x_{2}> U_{0,j}\right)& = \mathbb{P}^{\left(1\right)}\left(Q_{1}\leq i,Q_1+Q_{2}\leq j\right)\\
 & =\sum_{l=0}^{i}\mathbb{P}^{\left(1\right)}\left(Q_{1}= l\right)\mathbb{P}^{\left(1\right)}\left(Q_{2}\leq j-l\right).\nonumber
\end{align}
Using (\ref{ec})
\begin{align}\label{eq2}
\mathbb{P}^{(1)}\left(Q_{2}\leq m\right)&=\frac{1-(\lambda_{1}+\lambda_{2})}{1-\lambda_{1}}+\frac{\lambda_{2}(1-(\lambda_{1}+\lambda_{2})^{m})}{1-\lambda_{1}}\\
&=1-\frac{\lambda_{2}(\lambda_{1}+\lambda_{2})^m}{1-\lambda_{1}}.\nonumber
\end{align}
Plugging (\ref{eq2}) into (\ref{eq1}) and using (\ref{eq3})
\begin{align}
&\mathbb{P}\left(x_{1}\geq U_{0,i}, x_{2}\geq U_{0,j}\right)\\ &=\sum_{l=0}^{i}(1-\lambda_{1})\lambda_{1}^l\left(1-\frac{\lambda_{2}(\lambda_{1}+\lambda_{2})^{j-l}}{1-\lambda_{1}}\right)\nonumber\\
&=1-\lambda_{1}^{i+1}-\left(\lambda_{1}+\lambda_{2}\right)^{j+1}\left(1-\left(\frac{\lambda_{1}}{\lambda_{1}+\lambda_{2}}\right)^{i+1}\right)\nonumber\\
&=1-(1-\sqrt{x_1})^{i+1}-\left(1-\sqrt{x_2}\right)^{j+1}\left(1-\left(\frac{1-\sqrt{x_1}}{1-\sqrt{x_2}}\right)^{i+1}\right),\nonumber
\end{align}
which is what we wanted.\\
Next we compute the joint distribution on the diagonal, that is, the
probability that the $i$'th and $j$'th particles have the same speed.

\begin{align}\label{eq4}
 &\mathbb{P}\left(x_{1}>U_{0,i}\geq U_{0,j}>x_{2}\right) =\mathbb{P}^{\left(1\right)}\left(Q_{1}\leq i,Q_{1}+Q_{2}\geq j+1\right) \\
 & =\sum_{l=0}^{i}\mathbb{P}^{\left(1\right)}\left(Q_{1}=l\right)\mathbb{P}^{\left(1\right)}\left(Q_{2}\geq j+1-l|Q_{1}=l\right)\nonumber \\
 & =\sum_{l=0}^{i}\left[\left(1-\lambda_{1}\right)\lambda_{1}^{l}\right]\left[\frac{\lambda_{2}\left(\lambda_{1}+\lambda_{2}\right)^{j-l}}{1-\lambda_{1}}\right]\nonumber \\
 & =\lambda_{2}(\lambda_{1}+\lambda_{2})^j\sum_{l=0}^{i}\left(\frac{\lambda_{1}}{\lambda_{1}+\lambda_{2}}\right)^l\nonumber \\
 & =(\lambda_{1}+\lambda_{2})^{j+1}\left(1-\left(\frac{\lambda_{1}}{\lambda_{1}+\lambda_{2}}\right)^{i+1}\right),\nonumber
\end{align}
where we used the independence of $Q_{1}$ and $Q_{2}$. Plugging
(\ref{eq3}) into (\ref{eq4})
we obtain
\[
\mathbb{P}\left(x_{1}>U_{0,i}\geq U_{0,j}>x_{2}\right)=(1-\sqrt{x_2})^{j+1}\left(1-\left(\frac{1-\sqrt{x_1}}{1-\sqrt{x_2}}\right)^{i+1}\right).
\]
Dividing by $x_{1}-x_{2}$ and letting $x_{1}\rightarrow x_{2}$ we
conclude that
\[
\mathbb{P}\left(U_{0,i}=U_{0,j}\in dx\right)=(i+1)\frac{(1-\sqrt{x})^j}{2\sqrt{x}}dx.
\]
\end{proof}

At this point we can also give the proof of the uniqueness
statement in Theorem \ref{thm:sd}.
We wish to show that if $\mu$
is the distribution of the speed process,
so that $\mu^\pi$ is a stationary distribution of the
multi-type TAZRP, then every translation-invariant ergodic
stationary distribution of the multi-type TAZRP
is of the form $\fu(\mu^\pi)$ for some
non-decreasing function $\fu$.

\begin{proof}[Proof of the uniqueness statement in Theorem \ref{thm:sd}]
The coupling approach of Mountford and Prabhakar
\cite{MP95} shows that for given
$\lambda_1, \dots, \lambda_n$,
there is a unique translation-invariant ergodic
stationary distribution of the $n$-type TAZRP
$\mu_{\lambda_1,\dots,\lambda_n}$
such that the rate of jumps of particles of type $i$
from site $0$ to site $1$ is $\lambda_i$.
In fact, since in stationarity the rate of such jumps is just the probability that the highest-priority particle at site $0$
has type $i$, these distributions are characterised by
the distribution of the type of that particle; under
$\mu_{\lambda_1,\dots,\lambda_n}$, the probability that
the highest-priority particle at site $0$ has type $i$
is $\lambda_i$.

Any distribution $\nu$ on $\mathcal{Z}$ is characterised by
the probabilities of cylinder events of the form
\[
\left\{
\eta(z,1)\leq a_1, \dots, \eta(z,k)\leq a_k
\right\}.
\]
Hence in fact $\nu$ is characterised by its projections
$\fu^n_{\bf{x}}(\nu)$ where $\fu^n_{\bf{x}}$ is a function of the form defined
at (\ref{eq:increasing function}).

If $\nu$ is a stationary distribution for the multi-type TAZRP,
then we know that any such $\fu^n_{\bf{x}}(\nu)$ is stationary
for the $n$-type TAZRP. Suppose that $\nu$ and $\tilde{\nu}$
are two translation-invariant ergodic stationary distributions
for the multi-type TAZRP, such that the distribution
of $\eta(0,0)$ is the same under $\nu$ and $\tilde{\nu}$.
Then, by the characterisation of the
distributions $\mu_{\lambda_1,\dots,\lambda_n}$ above,
the $n$-type stationary distributions $\fu^n_{\bf{x}}(\nu)$ and $\fu^n_{\bf{x}}(\tilde{\nu})$ are in fact the same for any such
$\bf{x}$. Hence $\nu$ and $\tilde{\nu}$ are the same.
So for any given distribution of $\eta(0,0)$,
there is at most one translation-invariant ergodic stationary distribution.

But under $\mu^\pi$, the distribution of $\eta(0,0)$ is
non-atomic. So for any desired target distribution,
we can find a non-decreasing function $\fu$ with the desired
distribution of $\eta(0,0)$ under $\fu(\mu^\pi)$.
Hence indeed all translation-invariant ergodic stationary distributions are of the form $\fu(\mu^{\pi})$, as desired.
\end{proof}

%We are now ready for the proof of the uniqueness of $\mu^\pi$. The following proof shows that if $\nu$ is another distribution on $\mathcal{Z}$ that is ergodic, stationary with respect to the TAZRP dynamics and has column distribution $\mu^\pi_0$ then $\mu^\pi=\nu$.
%\begin{proof}[Proof of uniqueness in Theorem \ref{thm:sd}]
%Assume $\nu$ is a distribution on $\mathcal{Z}$ that is ergodic, stationary with respect to the TAZRP dynamics and has column distribution $\mu^\pi_0$. Let $F_{\bf{x}}^{n}$ be the function defined in  (\ref{eq:increasing function}). Let
%\begin{align}\label{uq}
%	\mu^{\pi,n}&=F_{\bf{x}}^n(\mu^\pi)\\
%	\nu^n&=F_{\bf{x}}^n(\nu)\nonumber.
%\end{align}	
%For each $n\geq 1$ the distributions $\mu^{\pi,n}$ and $\nu^n$ are $n$-type TAZRP ergodic and stationary distributions. As both distributions $\mu^{\pi}$ and $\nu$ have the same marginal column distribution $\mu^\pi_0$ so do the distributions $\mu^{\pi,n}$ and $\nu^n$ have the same marginal column distribution. By uniqueness of $n$-type distributions (Theorem \ref{thm:mq}) it follows that
%\begin{align}\label{c3}
%	\mu^{\pi,n}=\nu^n.
%\end{align}
%As in Theorem \ref{Poisson pic}
%\begin{align}
%	\lim_{n\rightarrow \infty} \mu^{\pi,n}&=\mu^\pi \label{c1}\\
%	\lim_{n\rightarrow \infty} \nu^n&=\nu.\label{c2}
%\end{align}
%(\ref{c3}) along with (\ref{c1})--(\ref{c2}) imply that $\mu^\pi=\nu$ and the result follows.
%\end{proof}
\subsection{Joint distribution of multiple columns}
In this section we apply the results in Section \ref{sec:TCS} to Proposition \ref{tc}.
\begin{proof}[Proof of Proposition \ref{tc}]
Let $\mathbb{Q}^{0}$ and $\mathbb{Q}^{1}$ be two queues with arrival
process $F^{\left(2\right)}$ in stationarity, s.t the departure process
of $\mathbb{Q}^{0}$ is the arrival process of $\mathbb{Q}^{1}$.
It is not hard to see that
\begin{align}
&\mathbb{P}\left(U_{0,0}>x_1,U_{-1,j-1}>x_{1}>U_{-1,j}>...>U_{-1,j+k-1}>x_{2}\right)\\
&=\mathbb{P}\left(\left(\pi U\right)_{0,0},\left(\pi U\right)_{1,j-1}>x_{1}>\left(\pi U\right)_{1,j}>...>\left(\pi U\right)_{1,j+k-1}>x_{2}\right)\nonumber\\&=\mathbb{P}^{\left(1\right)}\left(Q_{1}^{0}\geq1,Q_{1}^{1}=j,Q_{2}^{1}\geq k\right)\nonumber.
\end{align}
By \lemref{two_column_distribution} we see that
\[
\mathbb{P}^{\left(1\right)}\left(Q_{1}^{0}\geq1,Q_{1}^{1}=j,Q_{2}^{1}\geq k\right)=\lambda_{2}\lambda_{1}^{j}\left(\lambda_{1}+\lambda_{2}\right)^{k-1}.
\]
Using (\ref{eq3})
we obtain the result.
\end{proof}
\begin{rem}
	Fix $v\in(0,1)$. Apply the function $\fu_{v}^{1}$ on the reflected speed process $\pi U$ and define the events
	\begin{align}
	A_{i}=\left\{ \text{the number of particles in the column $\fu_v^1(\pi U)_{i,\cdot}$  whose speed exceeds \ensuremath{v}}\right\} \quad i\in \mathbb{Z}.
	\end{align}
	As the distribution of $\fu_v^1(\pi U)$ is stationary with respect to the $1$-type TAZRP we see that the events $\{A_i\}_{i\in \mathbb{Z}}$ are independent.
	
\end{rem}
\section{overtaking}\label{sec:ot}

%Here we intend to do the following:
%\begin{enumerate}
%\item Examples - layered (inverese lexicographical order, maybe add question
%about total orderings)
%\item Seeing the TASEP speed process in the arrival process - James
%\end{enumerate}
Consider the initial condition $\eta^*$. Let $i_{1}\leq i_{2}$ and $j_1,j_2$ be such that $p_{i_1,j_1}>p_{i_2,j_2}$. We define their meeting time $T\in\mathbb{R}_{+}\cup\infty$
as the first time that the particle $p_{i_{1},j_{1}}$ is at the same
column as $p_{i_{2}j_{2}}$. We say the particle $p_{i_{1},j_{1}}$
overtakes the particle $p_{i_{2},j_{2}}$ if $T<\infty$. Note that
\begin{align}	
	&X_{p_{i_{1},j_{1}}}\left(t\right)<X_{p_{i_{2},j_{2}}}\left(t\right) \quad t<T\nonumber\\
	&X_{p_{i_{1},j_{1}}}\left(t\right)\geq X_{p_{i_{2},j_{2}}}\left(t\right) \quad t\geq T.\nonumber
\end{align}

\begin{proof}[Proof of Theorem \ref{thm:ov}]
The case where $U_{0,j}>U_{i,k}$ is clear and so we assume $U_{0,j}=U_{i,k}$.
The proof will rely on the following observations:
\begin{enumerate}
\item \label{en:relative}
As we are concerned only with the positions of the particles $p_{0,j}$
and $p_{i,k}$, we may change the types of the particles in the configuration
(even using a {\it non-monotone} relabelling) as long as the relative priority is preserved {\it with respect to the
two particles}.
\item We may also ignore any part of the dynamics that does not affect the positions of the two particles.
\item  We are only interested in the dynamics until the overtaking time $T$.
\end{enumerate}
We will use these guidelines to simplify the TAZRP configuration. We will then use the coupling with the TASEP and results on the TASEP speed process to conclude that overtaking occurs.

We divide the proof into several cases according to the values of $i,j,k$.\\
{\bf Case 1:} First assume that $i=1$ and $j=k=0$, in other words,
we assume the particle $p_{0,0}$ is at the bottom of the column $0$
has the same speed as that of the particle at the bottom of column
$1$. As the particles above $p_{0,0}$ are weaker then $p_{0,0}$ we may consider them as holes with respect to both $p_{0,0}$ and $p_{1,0}$, as this would not affect the dynamics of the two particles until overtaking occurs.
We now re-label the rest of the particles as follows (Figure \ref{fig:points}):
\begin{itemize}
  \item $p_{0,0}=-2$ and $p_{1,0}=-3$.
  \item $p_{i,j}=-1$ for all $i<0$, $j\in\mathbb{N}_{0}$.
  \item $p_{i,j}=-4$ for $i>1$ and for $i=1, j>0$.
\end{itemize}
It is straightforward to check that this keeps the order of priority with respect to the two particles. Using the coupling of the TAZRP with
the TASEP we see that this configuration translates to (for the TASEP we use the convention in \cite{amir2011tasep} that stronger particles are those with smaller value)
\begin{equation}\label{eq:same_speed_overtaking}
\ldots1111234444\ldots
\end{equation}
Recall from Subsection \ref{ssec:scc} that the coupling of the TAZRP with several second-class particles
holds until the first time that two second-class particles are in
the same column, which in this case is up to time $T$. Since particle
$p_{0,0}$ and $p_{1,0}$ have the same speed, so do the particles $2$ and $3$ in (\ref{eq:same_speed_overtaking}). In \cite[Theorem 1.14]{amir2011tasep} it was shown that with probability $1$ particle $2$ overtakes particle $3$. By the coupling with the TAZRP we see that $p_{0,0}$ overtakes $p_{1,0}$. \\
{\bf Case 2:} Next assume $i=1, j\geq 0$, $k\geq0$. (if $j=k=0$ this degenerates back to the previous case). Here we label the particles in the same
way as before except for $p_{m,l}$ where $m=0$ and $0\leq l<j$
or $m=1$ and $0\leq l<k$ which are labeled as first class particles,
i.e. $p_{m,l}=-1.$ Although the latter ($m=1$ and $0\leq l<k$ )
are of smaller value than the particle $p_{0,j}$, until time $T$ there is no interaction between them and $p_{0,j}$ (as they are always strictly to the right of $p_{0,j}$) so the labelling is consistent with the dynamics up to the point of
overtaking. This translates to the following multi-type TASEP configuration
\begin{equation}\label{eq:same_speed_overtaking-1}
\ldots11112\underset{j}{\underbrace{1...1}}3\underset{k}{\underbrace{1...1}}4444\ldots
\end{equation}
We now claim that particle $2$ overtakes particle $3$ in (\ref{eq:same_speed_overtaking-1}).
Assume it does not, then there is some positive probability $p>0$
of reaching (\ref{eq:same_speed_overtaking-1}) from (\ref{eq:same_speed_overtaking}).
This implies that starting from (\ref{eq:same_speed_overtaking})
with some positive probability particle $2$ will not overtake particle
$3$ contradicting that starting from configuration (\ref{eq:same_speed_overtaking})
particle $2$ a.s. overtakes particle $3$. \\
{\bf Case 3} Finally we prove the theorem for $i>1$. We use induction on $i$.
Suppose $U_{0,j}=U_{i+1,k}$ for some $j,k\in\mathbb{N}_{0}$, and
that our hypothesis holds for $1\leq i'\leq i$. There are two possibilities:
\begin{enumerate}%[label=(\alph*)]
  \item \label{it:same} There exists $1\leq m\leq i$ and $l\in\mathbb{N}_{0}$ s.t $U_{0,j}=U_{m,l}=U_{i+1,k}$.
  \item \label{it:different} For every $1\leq m\leq i$ and $l\in\mathbb{N}_{0}$ $U_{m,l}\neq U_{0,j}=U_{i+1,k}$.
\end{enumerate}
For case \ref{it:same} we use the induction hypothesis twice to conclude that particle $p_{m.l}$
overtakes particle $p_{i+1,k}$ and that particle $p_{0,j}$ overtakes
particle $p_{m,l}$ which together implies that $p_{0,j}$ overtakes $p_{i+1,k}.$
It remains therefore to deal with case \ref{it:different}.
Note that by (\ref{eq:one_point_marginal_TAZRP-2}) we see that for
every $m\in\mathbb{Z}$ w.p $1$ we have
\begin{equation}\label{eq:same_speed_overtaking-2}
\lim_{l\rightarrow\infty}U_{m,l}=0.
\end{equation}
Equation (\ref{eq:same_speed_overtaking-2}) implies that for $1\leq m\leq i$,
column $m$ has only a finite number of particles whose speed exceeds
$U_{0,j}$ and all the speeds of all other particles in the column
are strictly smaller than $U_{0,j}$. Particles located at column
$m$ for $1\leq m\leq i$ and whose speed is smaller than $U_{i+1,k}$
cannot overtake particle $p_{i+1,k}$ and are of value smaller than
that of $p_{0,j}$ and therefore will not change the dynamics of $p_{0,j}$
and $p_{i+1,k}$ and can be considered as holes by both particles. Particles located at column $m$
for $1\leq m\leq i$ and whose speed is larger than $U_{i+1,k}$ cannot
be overtaken by particle $p_{0,j}$ (whose speed is smaller then theirs
but whose value is greater) and we can therefore label them as first
class particles. Since we are in case \ref{it:different} all particles in columns $1\leq m \leq i$ fall into one of the above two options. The particles at columns $0$ and $i+1$ are labelled as before. All together, we see that we can translate our TAZRP
configuration to the following TASEP configuration
\begin{equation}
\ldots11112\underset{j}{\underbrace{1...1}}\underset{N_{1}}{\underbrace{1...1}}4\underset{N_{i}}{\underbrace{1...1}}3\underset{k}{\underbrace{1...1}}4444\ldots...,\label{eq:same_speed_overtaking-3}
\end{equation}
where for $1\leq m\leq i$ $N_{m}$ is the number of particles whose
speed is greater than $U_{0,j}$. We now argue as before: assume particle
$2$ does not overtake particle $3$, then there is some positive
probability $p'>0$ of reaching (\ref{eq:same_speed_overtaking-3})
from (\ref{eq:same_speed_overtaking}). This implies that starting
from (\ref{eq:same_speed_overtaking}), with some positive probability
particle $2$ will not overtake particle $3$ contradicting that starting
from configuration (\ref{eq:same_speed_overtaking}) particle $2$
a.s. overtakes particle $3$. We have now proved the inductive step.
As we already proved the hypothesis of the induction for $i=0$ (particles
$2$ and $3$ are located in two adjacent columns) the result follows.
\end{proof}
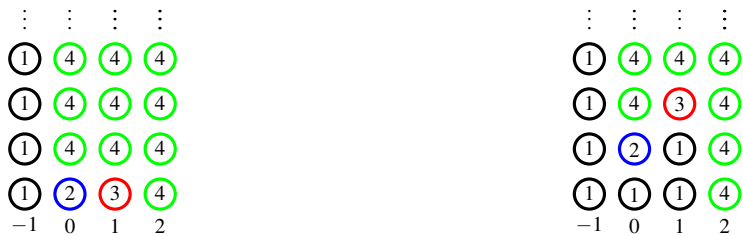
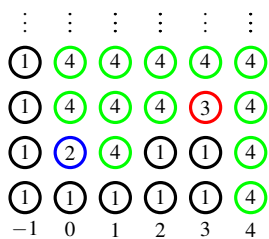
\begin{figure}[ht!]
	\centering%
	\begin{subfigure}[t]{.4\textwidth}
		\centering
		\begin{tikzpicture}[scale=0.4, every node/.style={transform shape}]\label{case 1}
	%\draw [line width=0.01cm] (0,0) -- (8,0);
%	\foreach \x in {1,...,4}
%	{   \draw  [very thick](0.5,0.05+\x*1.5) circle [radius=0.5][];
%		\node [scale=2][above] at (0.5,0.1+\x*1.5-0.5) {$1$};
%		\draw [dashed][line width=0.01cm] (0.5,7) -- (0.5,8);
%	};
	\foreach \x in {-1,...,2}
	{
		\node [scale=2][above] at (3.5+\x*1.5,0) {$\x$};
	};
	\node [scale=2][above] at (2,6.8) {$\vdots$};
	\foreach \x in {1,...,4}
	{   \draw  [very thick](2,0.05+\x*1.5) circle [radius=0.5][];
		\node [scale=2][above] at (2,0.1+\x*1.5-0.5) {$1$};
	};
		\foreach \x in {2,...,4}
	{   \draw  [very thick](3.5,0.05+\x*1.5) circle [radius=0.5][green];
		\node [scale=2][above] at (3.5,0.1+\x*1.5-0.5) {$4$};
	};
	\node [scale=2][above] at (3.5,6.8) {$\vdots$};
		\foreach \x in {2,...,4}
	{   \draw  [very thick](5,0.05+\x*1.5) circle [radius=0.5][green];
		\node [scale=2][above] at (5,0.1+\x*1.5-0.5) {$4$};
	\node [scale=2][above] at (5,6.8) {$\vdots$};
	};
	\foreach \x in {1,...,4}
{   \draw  [very thick](6.5,0.05+\x*1.5) circle [radius=0.5][green];
	\node [scale=2][above] at (6.5,0.1+\x*1.5-0.5) {$4$};
	\node [scale=2][above] at (6.5,6.8) {$\vdots$};
};
	\draw  [very thick](3.5,0.05+1.5) circle [radius=0.5][blue];
	\node [scale=2][above] at (3.5,0.1+1.5-0.5) {$2$};
	\draw  [very thick](5,0.05+1.5) circle [radius=0.5][red];
	\node [scale=2][above] at (5,0.1+1.5-0.5) {$3$};
		\end{tikzpicture}
		\caption{\small Case 1. The particles above particles $2$ and $3$ are of lower class and do not affect the dynamics up to the meeting time.}
	\end{subfigure}%
\hspace{2em}
	\begin{subfigure}[t]{{.5\textwidth}}
		\centering
		\begin{tikzpicture}[scale=0.4, every node/.style={transform shape}]
%		%column 0
%	\foreach \x in {1,...,4}
%{   \draw  [very thick](0.5,0.05+\x*1.5) circle [radius=0.5][];
%	\node [scale=2][above] at (0.5,0.1+\x*1.5-0.5) {$1$};
%	\draw [dashed][line width=0.01cm] (0.5,7) -- (0.5,8);
%};
%column 1
	\foreach \x in {-1,...,2}
{
	\node [scale=2][above] at (3.5+\x*1.5,0) {$\x$};
};
\node [scale=2][above] at (2,6.8) {$\vdots$};
\foreach \x in {1,...,4}
{   \draw  [very thick](2,0.05+\x*1.5) circle [radius=0.5][];
	\node [scale=2][above] at (2,0.1+\x*1.5-0.5) {$1$};
};
%column 2
\foreach \x in {3,...,4}
{   \draw  [very thick](3.5,0.05+\x*1.5) circle [radius=0.5][green];
	\node [scale=2][above] at (3.5,0.1+\x*1.5-0.5) {$4$};
	\node [scale=2][above] at (3.5,6.8) {$\vdots$};
};
\draw  [very thick](3.5,0.05+2*1.5) circle [radius=0.5][blue];
\node [scale=2][above] at (3.5,0.05+2*1.5-0.5) {$2$};
\draw  [very thick](3.5,0.05+1*1.5) circle [radius=0.5][];
\node [scale=2][above] at (3.5,0.05+1*1.5-0.5) {$1$};
%column 3
\foreach \x in {4,...,4}
{   \draw  [very thick](5,0.05+\x*1.5) circle [radius=0.5][green];
	\node [scale=2][above] at (5,0.1+\x*1.5-0.5) {$4$};
	\node [scale=2][above] at (5,6.8) {$\vdots$};
};
\foreach \x in {1,...,2}
{   \draw  [very thick](5,0.05+\x*1.5) circle [radius=0.5][];
	\node [scale=2][above] at (5,0.1+\x*1.5-0.5) {$1$};
};
\draw  [very thick](5,0.05+3*1.5) circle [radius=0.5][red];
\node [scale=2][above] at (5,0.05+3*1.5-0.5) {$3$};
%column 4
\foreach \x in {1,...,4}
{   \draw  [very thick](6.5,0.05+\x*1.5) circle [radius=0.5][green];
	\node [scale=2][above] at (6.5,0.1+\x*1.5-0.5) {$4$};
	\node [scale=2][above] at (6.5,6.8) {$\vdots$};
};
		\end{tikzpicture}
			\caption{\small Case 2. Although the particles below particles $2$ and $3$ are of different class in the speed process, treating them as first class particles does not affect the dynamics between particles $2$ and $3$.}
	\end{subfigure}

	\begin{subfigure}[t]{{.8\linewidth}}
		\centering
	\begin{tikzpicture}[scale=0.4, every node/.style={transform shape}]
%	%column 0
%	\foreach \x in {1,...,4}
%	{   \draw  [very thick](0.5,0.05+\x*1.5) circle [radius=0.5][];
%		\node [scale=2][above] at (0.5,0.1+\x*1.5-0.5) {$1$};
%		\draw [dashed][line width=0.01cm] (0.5,7) -- (0.5,8);
%	};
	\foreach \x in {-1,...,4}
{
	\node [scale=2][above] at (3.5+\x*1.5,0) {$\x$};
};
	%column 1
	\foreach \x in {1,...,4}
	{   \draw  [very thick](2,0.05+\x*1.5) circle [radius=0.5][];
		\node [scale=2][above] at (2,0.1+\x*1.5-0.5) {$1$};
	};
	\node [scale=2][above] at (2,6.8) {$\vdots$};
	%column 2
	\foreach \x in {3,...,4}
	{   \draw  [very thick](3.5,0.05+\x*1.5) circle [radius=0.5][green];
		\node [scale=2][above] at (3.5,0.1+\x*1.5-0.5) {$4$};
		\node [scale=2][above] at (3.5,6.8) {$\vdots$};
	};
	\draw  [very thick](3.5,0.05+2*1.5) circle [radius=0.5][blue];
	\node [scale=2][above] at (3.5,0.05+2*1.5-0.5) {$2$};
	\draw  [very thick](3.5,0.05+1*1.5) circle [radius=0.5][];
	\node [scale=2][above] at (3.5,0.05+1*1.5-0.5) {$1$};
	%column 3
	\foreach \x in {2,...,4}
	{   \draw  [very thick](5,0.05+\x*1.5) circle [radius=0.5][green];
	\node [scale=2][above] at (5,0.1+\x*1.5-0.5) {$4$};
	\node [scale=2][above] at (5,6.8) {$\vdots$};
	};
	\draw  [very thick](5,0.05+1*1.5) circle [radius=0.5][];
	\node [scale=2][above] at (5,0.05+1*1.5-0.5) {$1$};	
	%column 4
\foreach \x in {3,...,4}
{   \draw  [very thick](6.5,0.05+\x*1.5) circle [radius=0.5][green];
	\node [scale=2][above] at (6.5,0.1+\x*1.5-0.5) {$4$};
	\node [scale=2][above] at (6.5,6.8) {$\vdots$};
};
\foreach \x in {1,...,2}
{   \draw  [very thick](6.5,0.05+\x*1.5) circle [radius=0.5][];
	\node [scale=2][above] at (6.5,0.1+\x*1.5-0.5) {$1$};
	\node [scale=2][above] at (6.5,6.8) {$\vdots$};
};
	%column 5
	\foreach \x in {4,...,4}
	{   \draw  [very thick](8,0.05+\x*1.5) circle [radius=0.5][green];
		\node [scale=2][above] at (8,0.1+\x*1.5-0.5) {$4$};
		\node [scale=2][above] at (8,6.8) {$\vdots$};;
	};
	\foreach \x in {1,...,2}
	{   \draw  [very thick](8,0.05+\x*1.5) circle [radius=0.5][];
		\node [scale=2][above] at (8,0.1+\x*1.5-0.5) {$1$};
	};
	\draw  [very thick](8,0.05+3*1.5) circle [radius=0.5][red];
	\node [scale=2][above] at (8,0.05+3*1.5-0.5) {$3$};
	%column 6
	\foreach \x in {1,...,4}
	{   \draw  [very thick](9.5,0.05+\x*1.5) circle [radius=0.5][green];
		\node [scale=2][above] at (9.5,0.1+\x*1.5-0.5) {$4$};
		\node [scale=2][above] at (9.5,6.8) {$\vdots$};
	};
	\end{tikzpicture}
	\caption{\small Case 3 (2). Any particle in columns $0-3$ whose speed is strictly greater than that of the red and blue particles are treated as first class particles while all particles with speed below are treated as holes.}
\end{subfigure}
	\caption{\small Illustration of the three configurations described in cases 1, 2 and 3 of the proof of Theorem \ref{thm:ov}, with the minus signs omitted for neater presentation. The blue particle will ultimately meet the red particle.}
	\label{fig:points}
\end{figure}
\newpage

\bibliographystyle{plain}
\bibliography{TAZRP}

\begin{thebibliography}{10}

\bibitem{amir2011tasep}
Gideon Amir, Omer Angel, and Benedek Valk{\'o}.
\newblock The {TASEP} speed process.
\newblock {\em Ann.\ Probab.}, 39(4):1205--1242, 2011.

\bibitem{BN2017}
M\'{a}rton Bal\'{a}zs and Attila~L\'{a}szl\'{o} Nagy.
\newblock How to initialize a second class particle?
\newblock {\em Ann. Probab.}, 45(6A):3535--3570, 2017.

\bibitem{borodin2019color}
Alexei Borodin and Alexey Bufetov.
\newblock Color-position symmetry in interacting particle systems.
\newblock {\em Preprint arXiv:1905.04692}, 2019.

\bibitem{CP}
Eric Cator and Leandro P.~R. Pimentel.
\newblock Busemann functions and the speed of a second class particle in the
  rarefaction fan.
\newblock {\em Ann. Probab.}, 41(4):2401--2425, 2013.

\bibitem{C2011}
David Coupier.
\newblock Multiple geodesics with the same direction.
\newblock {\em Electron. Commun. Probab.}, 16:517--527, 2011.

\bibitem{CH2012}
David Coupier and Philippe Heinrich.
\newblock Coexistence probability in the last passage percolation model is
  {$6-8\log2$}.
\newblock {\em Ann. Inst. Henri Poincar\'{e} Probab. Stat.}, 48(4):973--988,
  2012.

\bibitem{FanSeppalainen}
Wai-Tong~(Louis) Fan and Timo Sepp\"{a}l\"{a}inen.
\newblock Joint distribution of {B}usemann functions in the exactly solvable
  corner growth model.
\newblock {\em Preprint arXiv:1808.09069}, 2018.

\bibitem{FGM2009}
Pablo~A. Ferrari, Patricia Gon\c{c}alves, and James~B. Martin.
\newblock Collision probabilities in the rarefaction fan of asymmetric
  exclusion processes.
\newblock {\em Ann. Inst. Henri Poincar\'{e} Probab. Stat.}, 45(4):1048--1064,
  2009.

\bibitem{FK}
Pablo~A. Ferrari and Claude Kipnis.
\newblock Second class particles in the rarefaction fan.
\newblock {\em Ann. Inst. H. Poincar\'{e} Probab. Statist.}, 31(1):143--154,
  1995.

\bibitem{ferrari2009multiclass}
Pablo~A. Ferrari and James~B. Martin.
\newblock Multiclass {H}ammersley--{A}ldous--{D}iaconis process and
  multiclass-customer queues.
\newblock {\em Ann.\ Inst.\ H.\ Poincar{\'e} Probab.\ Statist.},
  45(1):250--265, 2009.

\bibitem{F.M.P.}
Pablo~A. Ferrari, James~B. Martin, and Leandro P.~R. Pimentel.
\newblock A phase transition for competition interfaces.
\newblock {\em Ann. Appl. Probab.}, 19(1):281--317, 2009.

\bibitem{FerPim}
Pablo~A. Ferrari and Leandro P.~R. Pimentel.
\newblock Competition interfaces and second class particles.
\newblock {\em Ann. Probab.}, 33(4):1235--1254, 2005.

\bibitem{G2014}
Patr\'{\i}cia Gon\c{c}alves.
\newblock On the asymmetric zero-range in the rarefaction fan.
\newblock {\em J. Stat. Phys.}, 154(4):1074--1095, 2014.

\bibitem{K.}
Claude Kipnis.
\newblock Central limit theorems for infinite series of queues and applications
  to simple exclusion.
\newblock {\em Ann. Probab.}, 14(2):397--408, 1986.

\bibitem{liggett2012interacting}
Thomas~M. Liggett.
\newblock {\em Interacting particle systems}.
\newblock Springer, New York, 1985.

\bibitem{PerLy}
Russell Lyons and Yuval Peres.
\newblock {\em Probability on trees and networks}, volume~42 of {\em Cambridge
  Series in Statistical and Probabilistic Mathematics}.
\newblock Cambridge University Press, New York, 2016.

\bibitem{martin2010fixed}
James~B Martin and Balaji Prabhakar.
\newblock Fixed points for multi-class queues.
\newblock {\em Preprint arXiv:1003.3024}, 2010.

\bibitem{MG}
Thomas Mountford and Herv\'{e} Guiol.
\newblock The motion of a second class particle for the {TASEP} starting from a
  decreasing shock profile.
\newblock {\em Ann. Appl. Probab.}, 15(2):1227--1259, 2005.

\bibitem{MP95}
Thomas Mountford and Balaji Prabhakar.
\newblock On the weak convergence of departures from an infinite series of
  {$\cdot/M/1$} queues.
\newblock {\em Ann. Appl. Probab.}, 5(1):121--127, 1995.

\end{thebibliography}

\end{document}